\newcommand{\idd}{\mathbb{1}} 
\numberwithin{equation}{section}
\numberwithin{figure}{section}
\numberwithin{table}{section}
\theoremstyle{plain}
\newtheorem{thm}{Theorem}
  \theoremstyle{remark}
  \newtheorem{rem}[thm]{Remark}
  \theoremstyle{definition}
  \newtheorem{defn}[thm]{Definition}
  \theoremstyle{plain}
  \newtheorem{prop}[thm]{Proposition}
  \theoremstyle{plain}
  \newtheorem{lem}[thm]{Lemma}
  \theoremstyle{plain}
  \theoremstyle{definition}
  \newtheorem{exa}[thm]{Example}
  \theoremstyle{remark}
  \newtheorem{assumption}[thm]{Assumption}
\title{Extended Tetrad Analysis in Factor Modelling: Separability and Uncertainty from Multidimensional Dependence Structures}
\author[1,2]{Mario Angelelli}
{\small
\affil[1]{%
Department of Human and Social Sciences, University of Salento, Lecce, 73100, Italy}%
\affil[2]{%
GNSAGA Research Group, National Institute of Advanced Mathematics (INdAM), Rome, 00185, Italy}%
}
\date{}
\begin{document}
\maketitle


\begin{abstract} 
Geometric representations provide a principled framework for structuring the description of latent constructs and clarifying sources of uncertainty in their dimensional characterisation. We introduce a novel geometric representation of factor models via two subspaces spanned by paired matrices, where determinantal expressions explicitly quantify the contributions of different dimension subsets to the factor structure. This formulation refines rank-based conditions relevant to understanding factor score indeterminacy and the implications of non-uniqueness in instrumental variable estimation for over-identified models. 

By weighting these multidimensional contributions to encode sensitivity to their variation, we extend the definition of tetrads into an algebraic procedure that establishes conditions for identifying variability components attributable to individual dimensions. Focusing on cases where one factor encodes structural information, we derive minimal conditions---expressed through graph planarity---that ensure such dimension-specific identifiability. The proofs yield both formal verification tools and constructive methods for generating counterexamples where these conditions fail. These counterexamples reveal a type of ambiguity, termed contextuality, in which the comparison of dimensional contributions depends on the choice of remaining reference dimensions, violating well-established axioms of order-theoretic consistency. We relate these findings to specific forms of uncertainty examined in the psychometric literature. 

\texttt{Keywords}: Tetrads; Factor Indeterminacy; Instrumental Variables; Dependence Structures; Separability.

\end{abstract}

\section{Introduction} 
\label{sec: Introduction} 

Several key quantities in factor analysis and graphical models, particularly in structural equation modelling (SEM), can be expressed as determinants of submatrices (i.e., minors) of matrices encoding empirical or structural relations \citep{Drton2007,Drton2018}. This reflects the central role of minors in algebraic analyses of covariance structures. A notable example is Confirmatory Tetrad Analysis, where identities involving $2\times 2$ minors of a covariance matrix---under the assumption of a latent factor \citep[Example 6]{Drton2007}---enable the testing of measurement models in SEM, especially by providing evidence to differentiate between effect and causal indicator specifications \citep{Bollen1989,Bollen2000}. Minors are also instrumental in assessing model identifiability \citep{Weihs2018} and in evaluating partial correlations \citep{Steiger1979,Boege2018}, which help examine implications of factor indeterminacy \citep{steiger1979factor,Rigdon2019}. 

Factor indeterminacy is further elucidated by geometric descriptions \citep{heermann1964geometry,krijnen2002construction}, where the non-uniqueness of factor scores entails uncertainty in their correlations or angles with external variables \citep[Appendix A]{rhemtulla2024estimated}. 
We observe that this approach can be naturally extended to \emph{collections} of score realisations by considering the subspace they span, which admits a basis-invariant characterisation through matrix minors \citep[Ch. 3.1]{GKZ1994}, enabling the exploration of alignment with a second subspace via principal angles or related indices. In particular, we focus on the determinantal expansion \citep[Sec. 0.8.7]{Horn2012} 
 \begin{equation}
\det(\mathbf{L}\cdot\mathbf{R})=\sum_{\mathcal{I}\in \wp_{k}[n]}\Delta_{\mathbf{L}}(\mathcal{I})\cdot\Delta_{\mathbf{R}}(\mathcal{I})
\label{eq: Cauchy-Binet expansion}
 \end{equation} 
where $\mathbf{L},\mathbf{R}^{\mathtt{T}}$ are $k\times n$ matrices with $k\leq n$, $[n]=\{1,\dots,n\}$, and $\wp_{k}[n]=\{\mathcal{I}\subseteq [n]:\,|\mathcal{I}|=k\}$ is the collection of all $k$-element subsets of $[n]$; $\Delta_{\mathbf{L}}(\mathcal{I})$ and $\Delta_{\mathbf{R}}(\mathcal{I})$ denote the $k\times k$ subdeterminants (i.e., maximal minors) of $\mathbf{L}$ and $\mathbf{R}$, extracted from columns and rows indexed by $\mathcal{I}$, respectively. 
This expansion 
induces an inner product between vectors of maximal minors, 
yielding a similarity measure associated with principal angles between the corresponding $k$-dimensional subspaces, as used in canonical correlation analysis \citep{wolf2003learning,Hamm2008}. 

Building on such connections with canonical correlations, these expressions also apply in the context of instrumental variable (IV) estimation \citep[Sec. 2.3]{Bowden1985}. IVs are widely used in causal modelling \citep{Brito2002,Weihs2018}, and selected indicators can act as model-implied instruments in factor analysis \citep{bollen2024model}. In \emph{over-identified} models, instruments outnumber endogenous variables, whereas a just-identified model contains equal numbers of each. Over-identification thus permits multiple just-identified submodels, each derived by selecting a different subset of instruments. Estimators for these submodels \citep[Sec. 1.2]{Bowden1985} can be combined via a weight matrix, and this combination can be expressed through the expansion (\ref{eq: Cauchy-Binet expansion}), revealing a form of estimator arbitrariness and specification-induced effects \citep{rotemberg1983instrument}. 

This work introduces a novel geometric representation of such models, beginning with an invariant characterisation of a factor $\mathbf{R}$---which may, for instance, represent a score matrix or support estimation from empirical observations---via the subspace it spans, parameterised by its maximal minors \citep[Ch. 3.1.C]{GKZ1994}; $\mathbf{R}$ is assumed generic, in which case all such minors are non-zero. We then focus on factorisations incorporating \emph{structural} information through a second factor (e.g., a design or weight matrix), whose generated subspace is analogously characterised by the maximal minors of $\mathbf{L}$. These minors determine the pattern of non-zero---i.e., \emph{observable}---contributions in (\ref{eq: Cauchy-Binet expansion}), indexed by 
\begin{equation} 
\mathfrak{G}(\mathbf{L})=\left\{ \mathcal{I}\in\wp_{k}[n]:\,\Delta_{\mathbf{L}}(\mathcal{I})\neq0\right\}. 
\label{eq: matroid of non-zero minors}
\end{equation}
We operate directly on the bases defining the independence structure (\ref{eq: matroid of non-zero minors})---each of which may reflect, for instance, a different choice of reference dimensions 
or instruments used in estimation \citep[Eq. (6)]{rotemberg1983instrument}---through the corresponding terms $\Delta_{\mathbf{L}}(\mathcal{I})\cdot \Delta_{\mathbf{R}}(\mathcal{I})$ from (\ref{eq: Cauchy-Binet expansion}). By treating these non-zero terms as the primitive elements of our representation, we capture how each basis in (\ref{eq: matroid of non-zero minors}) contributes to the association between $\mathbf{R}$ and $\mathbf{L}$. More generally, this formulation extends to any rank-$k$ factorisation $\mathbf{L}\cdot \mathbf{R}$, even when $\mathbf{R}\in\mathbb{R}^{n\times s}$ and $k\neq s$, given that each maximal minor involved in parameterising the subspace spanned by $\mathbf{L}\cdot \mathbf{R}$ can be expressed via (\ref{eq: Cauchy-Binet expansion}), as discussed later. 

Each basis' contribution is analysed under varying weights assigned to the corresponding minor product, subject to 
determinantal constraints. This approach supports a form of model sensitivity analysis specific to our representation (Section \ref{subsec: relation to sensitivity analysis in monomial models}), where each weight reflects the variation attributable to a given minor product---and, by extension, to the corresponding subset of components or dimensions. We introduce a class of weight assignments decomposing over individual dimensions coherently across bases and investigate minimal conditions on (\ref{eq: matroid of non-zero minors}) under which these assignments are the only admissible weights. We refer to this property as the \emph{identifiability} of the factor model representation, and our analysis highlights a graphical characterisation of such conditions in relation to graph planarity (see Remark \ref{rem: analogy with Kuratowski}). We then derive expressions generalising tetrads within our representation, enabling an algebraic procedure to recover component-specific contributions when these structural conditions are satisfied. Concurrently, this procedure guides the construction of counterexamples in which the failure of any one of these conditions leads to a loss of identifiability. By ordering bases according to the norm of their weights, such counterexamples reveal how the non-separability of component-wise contributions introduces ambiguity into the model, violating the Independence of Irrelevant Alternatives (IIA) axiom \citep{Luce2005}. As such, these violations indicate a form of contextuality---that is, a dependence on the basis chosen for pairwise comparisons of component-specific contributions---that can be discussed in relation to foundational issues of latent score indeterminacy, as elements included in multiple collections pertaining to different domains of measures may acquire contextual meaning (see Remark \ref{rem: self-compatibility score matrices}). 

The results presented here lay methodological foundations for future developments, deepening the conceptual and practical implications of geometric formulations of measurement compatibility across contexts---in line with the previous considerations---and their graphical characterisations. Additionally, our treatment of structural factors could be broadened to accommodate alternative interpretations. In particular, the connection with principal angles between subspaces suggests extending this representation to analyse cross-covariances or cross-correlations between scores and external variables, thereby supporting the evaluation of 
uncertainty in score assignments. 

The paper is organised as follows. Section \ref{sec: previous work} sets the notation and formalises the conditions underlying identifiability. Section \ref{sec: theoretical framework} presents and contextualises the main results in relation to the state of the art, focusing on specific facets of uncertainty arising in the psychometric literature, especially IV methods (Section \ref{subsec: relation to uncertainty in IV estimation}), factor indeterminacy (Section \ref{subsec: relation to structural equations under partial information and factor indeterminacy}), and ambiguity or deviations from order-theoretic consistency conditions such as the IIA axiom (Section \ref{subsec: counterexamples and relations to choice theory}). From Section \ref{sec: principal minors and hyperdeterminants}, we provide the proofs of the technical results, first focusing on a precondition for identifiability in a given basis, then showing in Section \ref{sec: propagation of separability between different bases} how this property \emph{propagates} within $\mathfrak{G}(\mathbf{L})$. Section \ref{sec: counterexamples} presents counterexamples illustrating violations of these assumptions and linking them to structural inconsistency as failures of the IIA axiom. Finally, Section \ref{sec: conclusion} summarises the findings and outlines directions for future work.

\section{Preliminaries: Structural Conditions and Graphical Interpretation}
\label{sec: previous work}

We start formalising a weight assignment to each term in (\ref{eq: Cauchy-Binet expansion}) through the mapping 
\begin{equation}
    \mathcal{I}\mapsto \Delta_{\mathbf{L}}(\mathcal{I})\cdot \Delta_{\mathbf{R}}(\mathcal{I}) \cdot \mathbf{t}^{\Psi(\mathcal{I})},\quad \mathcal{I}\in\mathfrak{G}(\mathbf{L})
    \label{eq: toric deformation}
\end{equation}
where $\Psi(\mathcal{I})\in\mathbb{Z}^{d}$, $d\in\mathbb{N}$, $\mathbf{t}$ is a $d$-tuple of parameters, and $\mathbf{t}^{\mathbf{e}}=\prod_{u=1}^{d} t_{u}^{e_{u}}$. 
This parameterisation can be specified to explore base-specific weights $c_{\mathcal{I}}\neq 0$ by considering $d = |\mathfrak{G}(\mathbf{L})|$, ordering $\mathfrak{G}(\mathbf{L})$, and defining $\Psi$ as an indicator function---namely, $\Psi(\mathcal{I})_{u} = 1$ if $\mathcal{I}$ is the $u$-th basis in the ordered tuple and $0$ otherwise, then setting $t_{u}=c_{\mathcal{I}}$. Other specifications allow investigating algebraic interdependencies among such weights, consistent with the independence structure encoded in $\mathfrak{G}(\mathbf{L})$, and we consider the form (\ref{eq: toric deformation}) because it preserves that structure. Specifically, for $\Delta_{\mathbf{R}}(\mathcal{I})\neq 0$ (see Assumption \ref{claim: no Y=-1} below), the product (\ref{eq: toric deformation}) vanishes for generic $\mathbf{t}$ if and only if $\Delta_{\mathbf{L}}(\mathcal{I})=0$ holds, reflecting a linear dependence among columns of $\mathbf{L}$ indexed by $\mathcal{I}$. More general parameterisations can be reduced to this form through a suitable change of variables. 
Next, we look for $\mathbf{t}$-dependent matrices $\mathbf{L}(\mathbf{t})$ and $\mathbf{R}(\mathbf{t})$ that generate the minor products (\ref{eq: toric deformation}) as 
$h(\mathcal{I}) := \Delta_{\mathbf{L}(\mathbf{t})}(\mathcal{I})\cdot\Delta_{\mathbf{R}(\mathbf{t})}(\mathcal{I})$, where $\mathfrak{G}(\mathbf{L}(\mathbf{1}))=\mathfrak{G}(\mathbf{L})$ and $\mathcal{I}\in\mathfrak{G}(\mathbf{L})$---hereafter, `$:=$' denotes assignment. Note that the set of such terms 
is invariant under transformations 
\begin{equation}
\left(\mathbf{L}(\mathbf{t}),\mathbf{R}(\mathbf{t})\right)\mapsto \left(\mathbf{L}(\mathbf{t})\cdot \mathbf{D}(\mathbf{t})^{-1},\mathbf{D}(\mathbf{t})\cdot \mathbf{R}(\mathbf{t})\right)
\label{eq: gauge invariance}
\end{equation}
where $\mathbf{D}(\mathbf{t})$ is a generalised $n\times n$ permutation matrix depending on $\mathbf{t}$. We can obtain determinantal forms of the type (\ref{eq: toric deformation}) by taking a vector $\mathbf{d}(\mathbf{t})=(\mathbf{t}^{\mathbf{p}_{u}}:\,u\in[n])$ with $\mathbf{p}_{u}\in\mathbb{Z}^{d}$ and setting 
\begin{equation} 
\mathbf{L}(\mathbf{t}) = \mathbf{L}(\mathbf{1}) \cdot \mathrm{diag}\left(\mathbf{d}(\mathbf{t})\right),\quad  \mathbf{R}(\mathbf{t}) = \mathbf{R}(\mathbf{1}) \text{ constant}. 
\label{eq: separable deformations}
\end{equation} 
As anticipated, we say that identifiability holds if the contributions derived through (\ref{eq: separable deformations}) are the only weights consistent with the determinantal constraints. 

\subsection{Notation}
\label{subsec: Notation} 

Using $\setminus$ for set difference and defining $\mathcal{I}^{\mathtt{C}}:=[n]\setminus\mathcal{I}$, we denote the symmetric difference between sets $\mathcal{I}$ and $\mathcal{J}$ as $\mathcal{I}\Delta\mathcal{J}:=(\mathcal{I}\setminus \mathcal{J})\cup(\mathcal{J}\setminus \mathcal{I})$, and we introduce the notation 
 \begin{equation}
\mathcal{I}_{\alpha_{1}\alpha_{2}\dots}^{i_{1}i_{2}\dots}:=\mathcal{I}\backslash\{i_{1},i_{2},\dots\}\cup\{\alpha_{1},\alpha_{2},\dots\},\quad i_{1},i_{2},\dots\in\mathcal{I},\,\alpha_{1},\alpha_{2},\dots\in\mathcal{I}^{\mathtt{C}}. 
\label{eq: 1-exchange}
 \end{equation} 
The set $\mathfrak{G}(\mathbf{L})$ is a \emph{matroid} \citep{Oxley2011}, a combinatorial structure that generalises independence relations and satisfies the exchange relation 
 \begin{equation}
\text{for all }\mathcal{A},\mathcal{B}\in\mathfrak{G}(\mathbf{L}),\,\alpha\in\mathcal{A}\setminus\mathcal{B}:\quad\text{there exists }\beta\in\mathcal{B}\setminus\mathcal{A} \text{ such that }\mathcal{A}_{\beta}^{\alpha}\in\mathfrak{G}(\mathbf{L})
\label{eq: exchange relation}
 \end{equation}
which is equivalent to the following symmetric exchange property \citep{Brualdi1969}
\begin{equation} 
\text{for all }\mathcal{A},\mathcal{B}\in\mathfrak{G}(\mathbf{L}),\,\alpha\in\mathcal{A}\setminus\mathcal{B}:\quad\text{there exists }\beta\in\mathcal{B}\setminus\mathcal{A} \text{ such that }\mathcal{A}_{\beta}^{\alpha},\mathcal{B}_{\alpha}^{\beta}\in\mathfrak{G}(\mathbf{L}). 
\label{eq: symmetric exchange relation}
\end{equation}
We also introduce the binary relations $\triangledown_{\mathcal{H}}$ on $[n]$ for $\mathcal{H}\in\mathfrak{G}(\mathbf{L})$: 
\begin{equation}
\alpha\triangledown_{\mathcal{H}}\beta \quad \overset{{\scriptstyle \mathrm{def}}}{\Leftrightarrow} \quad \mathcal{H}_{\alpha}^{\beta}\in\mathfrak{G}(\mathbf{L})\text{ or }\mathcal{H}_{\beta}^{\alpha}\in\mathfrak{G}(\mathbf{L}). 
\label{eq: distinguishability relation on columns}
\end{equation}

While psychometric applications primarily focus on real matrices, the results presented here extend to complex matrices as well. Accordingly, we formulate them over $\mathbb{C}$ to streamline the proofs and enhance generalisability. Let $\mathbb{C}[\mathbf{t},\mathbf{t}^{-1}]$ be the algebra of polynomials in the variables $t_{1},t_{1}^{-1},\dots,t_{d},t_{d}^{-1}$. The non-zero monomials are the \emph{units} of this algebra, meaning they have a multiplicative inverse. This ensures that the weighting (\ref{eq: toric deformation}) is reversible. We denote by $\mathbb{F}$ the algebra of all ratios of polynomials in $\mathbb{C}[\mathbf{t},\mathbf{t}^{-1}]$, where division is allowed whenever the denominator is not identically zero. For any polynomial $P$ in $\mathbb{C}[\mathbf{t},\mathbf{t}^{-1}]$, we define $\mathrm{Supp}(P)$ as the set of non-zero monomials composing $P$, with 
the associated mapping  
 $\Psi(P):=\left\{ \mathbf{e}\in\mathbb{Z}^{d}:\,\text{there exists } c_{\mathbf{e}}\in\mathbb{C}\setminus\{0\} \text{ such that } c_{\mathbf{e}}\cdot\mathbf{t}^{\mathbf{e}}\in\mathrm{Supp}(P)\right\}$. 
\begin{rem} 
\label{rem: notation for Psi}
When the polynomial $P$ is a unit in $\mathbb{C}[\mathbf{t},\mathbf{t}^{-1}]$, i.e., $|\Psi(P)|=1$, we explicitly write $\Psi(P)=:\{\Psi^{(1)}(P)\}$. Under the parameterisation (\ref{eq: toric deformation}), each product $h(\mathcal{I})$ is a monomial in the parameters $\mathbf{t}$. We then set 
$\Psi(\mathcal{I}):=\Psi^{(1)}(h(\mathcal{I}))$ for any $\mathcal{I}\in\mathfrak{G}(\mathbf{L})$ 
to simplify notation whenever no ambiguity arises. 
\end{rem}
\begin{defn} 
\label{def: keys} 
The set 
$\chi(\mathcal{I}\mid_{\alpha\beta}^{ij}):=\left\{ h(\mathcal{I})\cdot h(\mathcal{I}_{\alpha\beta}^{ij}),h(\mathcal{I}_{\alpha}^{i})\cdot h(\mathcal{I}_{\beta}^{j}),h(\mathcal{I}_{\beta}^{i})\cdot h(\mathcal{I}_{\alpha}^{j})\right\}
$ 
is called \emph{observable} if $\chi(\mathcal{I}\mid_{\alpha\beta}^{ij})\neq\{0\}$; in that case, we use the same attribute for the corresponding index set $\mathfrak{c}:=\{i,j\}\times\{\alpha,\beta\}$. Let $\mathfrak{c}_{r}:=\{i,j\}$ and $\mathfrak{c}_{c}:=\{\alpha,\beta\}$ denote the projections of $\mathfrak{c}$ onto 
$\mathcal{I}$ and 
$\mathcal{I}^{\mathtt{C}}$, respectively. 
An observable set $\chi(\mathcal{I}\mid_{\alpha\beta}^{ij})$, or the corresponding index set $\{i,j\}\times\{\alpha,\beta\}$, is called a \emph{key} if $0\notin\chi(\mathcal{I}\mid_{\alpha\beta}^{ij})$; it is called a \emph{weak key} if at least three pairs $(l,\gamma)\in\{i,j\}\times\{\alpha,\beta\}$ satisfy $h(\mathcal{I}_{\gamma}^{l})\neq0$. Finally, an observable set is called \emph{separable} (or \emph{distinguishable}) if $\left|\Psi\left(\chi(\mathcal{I}\mid_{\alpha\beta}^{ij})\setminus\{0\}\right)\right|=1$, and $\mathcal{I}\in\mathfrak{G}(\mathbf{L})$ is \emph{separable} if all observable sets $\chi(\mathcal{I}\mid_{\alpha\beta}^{ij})$ based on $\mathcal{I}$ are separable. 
\end{defn} 

Note that basic solutions (\ref{eq: separable deformations}) satisfy the separability condition. 
\begin{exa}
\label{exa: keys} 
To illustrate these definitions, we consider 
{\small 
\begin{equation}
\mathbf{L}_{\mathrm{ex}}:=
\begin{array}{c}
\begin{pmatrix}1 & 0 & 0 & 0 & 1 & 1 & 3\\
0 & 1 & 0 & 0 & 2 & 2 & 4\\
0 & 0 & 1 & 1 & 4 & 5 & 0
\end{pmatrix} \\ 
\begin{array}{ccccccc}
{\scriptstyle i} & {\scriptstyle j} & {\scriptstyle m} & {\scriptstyle \alpha} & {\scriptstyle \beta} & {\scriptstyle \gamma} & {\scriptstyle \delta}\end{array}
\end{array}
\end{equation}
}
where we explicitly label the columns and assume that no maximal minor in a paired matrix $\mathbf{R}_{\mathrm{ex}}$ identically vanishes. We observe that $\chi(\mathcal{I}\mid_{\alpha\beta}^{ij})=\{0\}$, so the set $\{i,j\}\times\{\alpha,\beta\}$ is not observable. In contrast, the sets $\mathfrak{c}_{\beta\gamma}:=\{i,j\}\times\{\beta,\gamma\}$, $\mathfrak{c}_{\beta\delta}:=\{i,j\}\times\{\beta,\delta\}$, and $\mathfrak{c}_{\gamma\delta}:=\{i,j\}\times\{\gamma,\delta\}$ are observable. Both $\mathfrak{c}_{\beta\delta}$ and $\mathfrak{c}_{\gamma\delta}$ are keys, as $0\notin\chi(\mathcal{I}\mid_{\beta\delta}^{ij})\cup\chi(\mathcal{I}\mid_{\gamma\delta}^{ij})$. On the other hand, $\chi(\mathcal{I}\mid_{\beta\gamma}^{ij})$ is a weak key since $\Delta_{\mathbf{L}_{\mathrm{ex}}}(\{m,\beta,\gamma\})=0$. Finally, $\chi(\mathcal{I}\mid_{\alpha\delta}^{im})$ is an observable set that is not a weak key. 
\end{exa} 

The $k$-dimensional subspace of $\mathbb{C}^{n}$ generated by a full-rank matrix $\mathbf{M}\in\mathbb{C}^{k\times n}$ is parameterised by its \emph{Pl\"{u}cker coordinates}, namely, the tuple $\left(\Delta_{\mathbf{M}}(\mathcal{I}):\,\mathcal{I}\in\wp_{k}[n]\right)$ \citep[Ch. 3.1.C]{GKZ1994}. These homogeneous coordinates embed the set of $k$-dimensional subspaces of $\mathbb{C}^{n}$ (the $(k,n)$-\emph{Grassmannian}) into a projective space; indeed, a change of basis in $\mathbb{C}^{k}$, acting on $\mathbf{M}$ by left-multiplication, scales all Pl\"{u}cker coordinates by a common factor but leaves the subspace generated by $\mathbf{M}$ unchanged. 
\begin{rem} 
\label{rem: Grassmann-Plucker coordinates and factor score representations}
Pl\"{u}cker coordinates make explicit the duality between the subsets $\mathcal{I}$ and $\mathcal{I}^{\mathtt{C}}$ partitioning $[n]$, as the coordinates of a $k$-subspace---and the associated matroid (\ref{eq: matroid of non-zero minors})---can be equivalently expressed starting from its orthogonal complement. Such a duality is suited to the description of a factor model with loading matrix $\mathbf{L}=\left({\bf{\Lambda}}|\idd_{n-k}\right)$ via $k$-dimensional spaces, where $\bf{\Lambda}$ encodes common factor loadings and $\idd_{n-k}$ denotes the $(n-k)$-dimensional identity matrix (unique factor loadings). Specifically, the orthogonal complement to the subspace of $\mathbb{R}^{n}$ spanned by $\mathbf{L}$ can be represented by $\left(\idd_{k}|-\bf{\Lambda}^{\mathtt{T}}\right)$ \citep[Prop. 2.2.23]{Oxley2011}, and its Pl\"{u}cker coordinates coincide with those of $\mathbf{L}$ up to a sign \citep[Lemma 11.4]{lukowski2023positive}. For each $\mathcal{I}$, this sign depends solely on the partition $(\mathcal{I};\mathcal{I}^{\mathtt{C}})$; thus, it equals the corresponding sign arising from $\mathbf{R}$ expressed via its orthogonal complement. Therefore, the two signs cancel out in each minor product (\ref{eq: toric deformation}), which remains invariant under this duality. We also note that the rotational invariance of Pl\"{u}cker coordinates with respect to changes of basis in $\mathbb{R}^{k}$ allows one to isolate this geometric freedom from rank-based sources of factor indeterminacy. 
\end{rem} 
The Pl\"{u}cker coordinates satisfy the three-term Grassmann-Pl\"{u}cker relations \citep[Ch. 3.1.D]{GKZ1994}
 \begin{align} 
 \hspace{-.5cm}
\Delta_{\mathbf{M}}(\mathcal{I})\cdot\Delta_{\mathbf{M}}(\mathcal{I}_{\alpha\beta}^{ij}) & = c_{1} \cdot \Delta_{\mathbf{M}}(\mathcal{I}_{\alpha}^{i})\cdot\Delta_{\mathbf{M}}(\mathcal{I}_{\beta}^{j}) + c_{2} \cdot \Delta_{\mathbf{M}}(\mathcal{I}_{\beta}^{i})\cdot\Delta_{\mathbf{M}}(\mathcal{I}_{\alpha}^{j}), 
\label{eq: three-term Grassmann-Plucker relations} \\ 
 \hspace{-.5cm}
c_{1} :=  \mathrm{sign}\left((i-j)(\alpha-\beta)(i-\beta)(\alpha-j)\right), & \, c_{2} := \mathrm{sign}\left((i-j)(\alpha-\beta)(i-\alpha)(j-\beta)\right).
\label{eq: signs for permutation}
\end{align}
The attribute ``observable'' in Definition \ref{def: keys} refers to non-trivial information in $\chi(\mathcal{I}\mid_{\alpha\beta}^{ij})$ 
derived from the Grassmann-Pl\"{u}cker relations. Such constraints can be used to extract information about 
the functions 
\begin{equation}
Y(\mathcal{I})_{\alpha\beta}^{ij}:=c_{1}c_{2}\cdot\frac{\Delta_{\mathbf{R}(\mathbf{t})}(\mathcal{I}_{\alpha}^{i})}{\Delta_{\mathbf{R}(\mathbf{t})}(\mathcal{I}_{\beta}^{i})}\cdot\frac{\Delta_{\mathbf{R}(\mathbf{t})}(\mathcal{I}_{\beta}^{j})}{\Delta_{\mathbf{R}(\mathbf{t})}(\mathcal{I}_{\alpha}^{j})}, \quad i,j\in\mathcal{I},\,\alpha,\beta\in\mathcal{I}^{\mathtt{C}}  
\label{eq: cross-section}
 \end{equation} 
referred to as \emph{extended tetrads} or $Y$-\emph{terms}; we say that $Y(\mathcal{I})_{\alpha\beta}^{ij}$ is observable if $\chi(\mathcal{I}|_{\alpha\beta}^{ij})$ is observable. From (\ref{eq: three-term Grassmann-Plucker relations}), the $Y$-terms transform as follows under changes of basis obtained by a \emph{single-index exchange}: 
\begin{align}
Y(\mathcal{I}_{\alpha}^{i})_{i\beta}^{\alpha j} & = -c_{2}\frac{\Delta_{\mathbf{R}(\mathbf{t})}(\mathcal{I})\cdot\Delta_{\mathbf{R}(\mathbf{t})}(\mathcal{I}_{\alpha\beta}^{ij})}{\Delta_{\mathbf{R}(\mathbf{t})}(\mathcal{I}_{\beta}^{i})\cdot\Delta_{\mathbf{R}(\mathbf{t})}(\mathcal{I}_{\alpha}^{j})}=-Y(\mathcal{I})_{\alpha\beta}^{ij}-1,
\label{eq: Grassmann-Plucker translation exchange, vertical}\\
Y(\mathcal{I}_{\beta}^{i})_{\alpha i}^{\beta j} & = -c_{1}\cdot\frac{\Delta_{\mathbf{R}(\mathbf{t})}(\mathcal{I}_{\alpha}^{i})}{\Delta_{\mathbf{R}(\mathbf{t})}(\mathcal{I})}\cdot\frac{\Delta_{\mathbf{R}(\mathbf{t})}(\mathcal{I}_{\beta}^{j})}{\Delta_{\mathbf{R}(\mathbf{t})}(\mathcal{I}_{\alpha\beta}^{ij})}=-\frac{1}{1+\left(Y(\mathcal{I})_{\alpha\beta}^{ij}\right)^{-1}}.
\label{eq: Grassmann-Plucker translation exchange, diagonal}
\end{align} 
For each change of basis $\mathcal{I}\mapsto\mathcal{J}:=\mathcal{I}_{\gamma}^{l}\in\mathfrak{G}(\mathbf{L})$, where $(l,\gamma)\in\mathfrak{c}\subseteq\mathcal{I}\times\mathcal{I}^{\mathtt{C}}$, we set 
 \begin{equation}
\mathfrak{c}_{\mathcal{J}}:=(\mathfrak{c}_{r})_{\gamma}^{l}\times(\mathfrak{c}_{c})_{l}^{\gamma}.
\label{eq: weak key under local exchange}
 \end{equation}

\begin{rem}
\label{rem: motivation naming keys}
Example \ref{exa: keys} clarifies the terminology ``(weak) key'': the properties of the key $\mathfrak{c}_{\gamma\delta}$ allow us to access other bases in $\mathfrak{G}(\mathbf{L})$, starting from $\mathcal{I}$, by performing a label switching $u\leftrightarrows \omega$ with $u\in\{i,j\}$ and $\omega\in\{\gamma,\delta\}$. This accessibility is 
limited to indices in $\mathfrak{c}_{\gamma\delta}$ and 
extends to transformed bases via (\ref{eq: weak key under local exchange}). This is not the case for the weak key $\mathfrak{c}_{\beta\gamma}$, as $\mathcal{I}_{\beta\gamma}^{ij}\notin\mathfrak{G}(\mathbf{L})$. Finally, (weak) keys are characterised by the existence of a basis $\mathcal{J}$ obtained from $\mathcal{I}$ through either (\ref{eq: Grassmann-Plucker translation exchange, vertical}), (\ref{eq: Grassmann-Plucker translation exchange, diagonal}), or the identity map, satisfying 
\begin{equation}
\prod_{(s,\sigma)\in\mathfrak{c}_{\mathcal{J}}}h(\mathcal{J}_{\sigma}^{s})\neq0.
\label{eq: local context for representation of 4-square}
\end{equation}
This property, which will be used later, does not hold for $\mathfrak{c}_{\alpha\delta}$ in Example \ref{exa: keys}.
\end{rem} 
It is also straightforward to verify  
\begin{equation}
Y_{\alpha\beta}^{ij}\cdot Y_{\beta\gamma}^{ij} =  -Y_{\alpha\gamma}^{ij},\quad 
Y_{\alpha\beta}^{im}\cdot Y_{\alpha\beta}^{mj} = -Y_{\alpha\beta}^{ij}.
\label{eq: associativity}
\end{equation}
In the degenerate cases where $i=j$ or $\alpha=\beta$, we set $Y_{\alpha\beta}^{ij}=-1$ for consistency. Iterating (\ref{eq: associativity}), for all $i,j,m\in\mathcal{I}$ and $\alpha,\beta,\omega\in\mathcal{I}^{\mathtt{C}}$, we obtain the decomposition 
\begin{equation}
     Y_{\alpha\beta}^{ij} = -Y_{\alpha\omega}^{ij}\cdot Y_{\omega\beta}^{ij} 
    = -Y_{\alpha\omega}^{im}\cdot Y_{\alpha\omega}^{mj}\cdot Y_{\omega\beta}^{im}\cdot Y_{\omega\beta}^{mj}.
    \label{eq: quadrilateral decomposition}
\end{equation} 

\subsection{\label{subsec: Assumptions} Assumptions}

\subsubsection{No Trivial Dependence Relations in $\mathbf{L}(\mathbf{t})$} 
\label{par: non-trivial dependence pattern} 

We assume that each column of $\mathbf{L}(\mathbf{t})$ belongs to at least one basis in $\mathfrak{G}(\mathbf{L})$. Thus, for every $\mathcal{B}\in\mathfrak{G}(\mathbf{L})$ and $\alpha\notin\mathcal{B}$, there exists $\mathcal{A}\in\mathfrak{G}(\mathbf{L})$ such that $\alpha\in\mathcal{A}$ and, by (\ref{eq: symmetric exchange relation}), at least one $i\in\mathcal{B}$ for which $\Delta_{\mathbf{L}(\mathbf{t})}(\mathcal{B}_{\alpha}^{i})\neq0$. 
\emph{Null} columns do not satisfy these conditions and thus contribute no information about the corresponding rows in $\mathbf{R}(\mathbf{t})$ in (\ref{eq: Cauchy-Binet expansion}); 
they are therefore excluded without loss of relevant information. Dually, we assume that for each $\mathcal{I}\in\mathfrak{G}(\mathbf{L})$ and $i\in\mathcal{I}$, there exists at least one $\alpha\in\mathcal{I}^{\mathtt{C}}$ such that $\Delta_{\mathbf{L}(\mathbf{t})}(\mathcal{I}_{\alpha}^{i})\neq0$, ensuring that no $i\in[n]$ belongs to \emph{every} basis. 

Some of the results presented here extend those in \citet{Angelelli2025}, allowing for greater sparsity in the structural matrix $\mathbf{L}(\mathbf{t})$ to explore minimal conditions for separability. In what follows, we impose the dimensional bound $\max\{n-k,k\}\geq 5$ to exclude cases where deviations from separability arise due to small values of $n$, which in turn restrict $k$; see, e.g., \citet[Example 4]{Angelelli2025}. 

\subsubsection{Generic $Y$-terms}
\label{subsubsec: no Y=-1} 

The matrix $\mathbf{R}(\mathbf{t})$ is assumed to be generic, making the dependence pattern defined by terms $h(\mathcal{I})=0$ fully determined by $\wp_{k}[n]\setminus\mathfrak{G}(\mathbf{L})$ and $Y$-terms generic as well in the following sense: 
\begin{assumption}
\label{claim: no Y=-1}
No $Y$-term (\ref{eq: cross-section}) vanishes identically as a function of $\mathbf{t}$; specifically, $\Delta_{\mathbf{R}(\mathbf{t})}(\mathcal{I})\neq0$ for all $\mathcal{I}\in\wp_{k}[n]$ at $\mathbf{t}=\mathbf{1}$ and for a generic choice of $\mathbf{t}$. 
\end{assumption}
From (\ref{eq: Grassmann-Plucker translation exchange, vertical}), this assumption implies that $Y_{\alpha\beta}^{ij}\notin\{0,-1\}$
for all $\mathcal{I}\in\wp_{k}[n]$, $i,j\in\mathcal{I}$, and $\alpha,\beta\in\mathcal{I}^{\mathtt{C}}$ with $i\neq j$ and $\alpha\neq\beta$. We still allow $Y_{\alpha\beta}^{ij}=-1$ only for the degenerate cases where $i=j$ or $\alpha=\beta$. By applying a permutation matrix $\mathbf{D}$ mapping $\mathcal{I}$ to $[k]$ via (\ref{eq: gauge invariance}) and a change of basis in $\mathbb{C}^{k}$, the column span of $\mathbf{D}\cdot\mathbf{R}(\mathbf{1})$ can be represented by $\left(\idd_{k}|\mathbf{r}\right)^{\mathtt{T}}$, with $\mathbf{r}\in\mathbb{C}^{k\times (n-k)}$ \citep[Sec. 0.3.4]{Horn2012}. Then, the excluded value $Y(\mathcal{I})_{\alpha\beta}^{ij} = -1$ would yield a vanishing $2\times 2$ minor of $\mathbf{r}$ by (\ref{eq: Grassmann-Plucker translation exchange, vertical}), thereby mirroring tetrad constraints in our setting. In this sense, our work extends tetrad analysis by incorporating observability (as per Definition \ref{def: keys}) and explicitly formalising the role of the reference basis $\mathcal{I}\in\mathfrak{G}(\mathbf{L})$. 

\subsubsection{Local Separability Condition and its Relation to Graph Planarity} 
\label{subsubsec: local separability condition and its relation to graph planarity} 

Before stating the last condition, we fix the following:  
\begin{defn}
\label{def: adjoint pair of null-sets}
Every $\mathcal{A}\subseteq\mathcal{I}$ and $\mathcal{H}\subseteq\mathcal{I}^{\mathtt{C}}$, $\mathcal{I}\in\wp_{k}[n]$, generate a pair of dual sets 
\begin{equation} 
\mathcal{N}_{\mathcal{I};\mathcal{A}} := \left\{ \gamma\in\mathcal{I}^{\mathtt{C}}:\,h(\mathcal{I}_{\gamma}^{i})=0 \text{ for all } i\in\mathcal{A}\right\},
\;
\mathcal{N}^{\mathcal{I};\mathcal{H}} := \left\{ m\in\mathcal{I}:\,h(\mathcal{I}_{\alpha}^{m})=0 \text{ for all }\alpha\in\mathcal{H}\right\}
.
\label{eq: sets of null arrays}
\end{equation}
To simplify notation, we omit the subscript $\mathcal{I}$ when no ambiguity arises and define   
$\mathcal{N}(\mathcal{A};\mathcal{H}):=\left\{ (m,\omega):\,\omega\in\mathcal{N}_{\mathcal{I};\mathcal{A}}\text{ or }m\in\mathcal{N}^{\mathcal{I};\mathcal{H}}\right\}$.  
Given a key $\mathfrak{c}=\mathfrak{c}_{r}\times\mathfrak{c}_{c}$, 
we denote the associated set $\mathcal{N}(\mathfrak{c}_{r};\mathfrak{c}_{c})$
as $\mathcal{N}(\mathfrak{c})$. 
The sets $\mathcal{N}_{\mathcal{A}}$ and $\mathcal{N}^{\mathcal{H}}$ in (\ref{eq: sets of null arrays}) are dual in the sense that they satisfy the adjunction property 
$\mathcal{H}\subseteq\mathcal{N}_{\mathcal{A}}\Leftrightarrow\mathcal{A}\subseteq\mathcal{N}^{\mathcal{H}}$. 
Thus, $\mathcal{A}\subseteq\mathcal{I}$ and $\mathcal{H}\subseteq\mathcal{I}^{\mathtt{C}}$ are adjoint if $\mathcal{H}\subseteq\mathcal{N}_{\mathcal{A}}$. 
\end{defn}
\begin{rem}
\label{rem: invariance null arrays} For every $\mathcal{A}\subseteq\mathcal{I}$ and $i\in\mathcal{A}$, whenever $h(\mathcal{I}_{\alpha}^{i})\neq0$, the set $\mathcal{N}_{\mathcal{I};\mathcal{A}}$ in (\ref{eq: sets of null arrays}) remains invariant under the change of basis $\mathcal{I}\mapsto\mathcal{J}:=\mathcal{I}_{\alpha}^{i}$. Indeed, $\alpha\notin\mathcal{N}_{\mathcal{I};\mathcal{A}}$, and for all $\beta\in\mathcal{N}_{\mathcal{I};\mathcal{A}}$, we have $h(\mathcal{J}_{\beta}^{\alpha})=h(\mathcal{I}_{\beta}^{i})=0$ by definition. For the other indices $j\in\mathcal{A}^{i}$, we find $h(\mathcal{J}_{\beta}^{j})=h(\mathcal{I}_{\alpha\beta}^{ij})=0$, so  
$\mathcal{N}_{\mathcal{I};\mathcal{A}}\subseteq\mathcal{N}_{\mathcal{J};\mathcal{A}_{\alpha}^{i}}$. By symmetry under the exchanges $i\leftrightarrows \alpha$ and $\mathcal{I}\leftrightarrows \mathcal{J}$, we deduce that 
$\mathcal{N}_{\mathcal{I};\mathcal{A}}=\mathcal{N}_{\mathcal{J};\mathcal{A}_{\alpha}^{i}}$.
Dually, for every $\mathcal{H}\subseteq\mathcal{I}^{\mathtt{C}}$ and $\alpha\in\mathcal{H}$, the set $\mathcal{N}^{\mathcal{I};\mathcal{H}}$ is invariant under the change of basis $\mathcal{I}\mapsto\mathcal{I}_{\alpha}^{m}$ whenever $h(\mathcal{I}_{\alpha}^{m})\neq0$. In particular, transformations (\ref{eq: weak key under local exchange}) preserve the set $\mathcal{N}(\mathfrak{c})$ and map a (weak) key to a new (weak) key, at least one of which satisfies (\ref{eq: local context for representation of 4-square}). 
\end{rem} 
\begin{defn} 
\label{def: explained key} A key $\mathfrak{c}$ is called \emph{planar} 
if the following conditions simultaneously hold: 
\begin{equation} 
\mathcal{N}^{\mathfrak{c}_{c}}=\mathcal{I}\setminus\mathfrak{c}_{r}, \quad \mathcal{N}_{\mathfrak{c}_{r}}=\mathcal{I}^{\mathtt{C}}\setminus\mathfrak{c}_{c}.
\label{eq: equivocal condition}
\end{equation}
Otherwise, it is referred to as \emph{non-planar}. 
\end{defn} 
\begin{rem} 
\label{rem: planarity preservation}
For a key $\mathfrak{c}=\{i_{1},i_{2}\}\times\{\alpha_{1},\alpha_{2}\}$ in a basis $\mathcal{I}$ and $(m,\omega)\in\mathcal{N}(\mathfrak{c})$ such that $\mathcal{J}:=\mathcal{I}_{\omega}^{m}\in\mathfrak{G}(\mathbf{L})$, the change of basis $\mathcal{I}\mapsto\mathcal{J}$ preserves planarity: specifically, $\mathfrak{c}$ remains a key in $\mathcal{J}$, as $\Delta_{\mathbf{L}(\mathbf{t})}(\mathcal{I}_{\omega\alpha_{w}}^{mi_{u}})\neq0$ for all $u,w\in\{1,2\}$, and $\Delta_{\mathbf{L}(\mathbf{t})}(\mathcal{I}_{\omega\alpha_{1}\alpha_{2}}^{mi_{1}i_{2}})\neq0$. Moreover, if $\mathfrak{c}$ is planar in $\mathcal{I}$, then by definition $h(\mathcal{I}_{\gamma}^{i_{u}})=h(\mathcal{I}_{\alpha_{w}}^{g})=0$ for all $u,w\in\{1,2\}$, $g\in\mathcal{I}\setminus\mathfrak{c}_{r}$, and $\gamma\in\mathcal{I}^{\mathtt{C}}\setminus\mathfrak{c}_{c}$. This yields $h(\mathcal{J}_{m}^{i_{u}})=h(\mathcal{I}_{\omega}^{i_{u}})=0$ when $g=m$, $h(\mathcal{J}_{\alpha_{w}}^{\omega})=h(\mathcal{I}_{\alpha_{w}}^{m})=0$ when $\gamma=\omega$, and hence $h(\mathcal{I}_{\omega\gamma}^{mi_{u}})=h(\mathcal{I}_{\omega\alpha_{w}}^{mg})=0$ in the remaining cases. Hence, $\mathfrak{c}$ remains planar in $\mathcal{J}$. Applying the inverse change of basis $\mathcal{J}\mapsto\mathcal{I}$ yields the converse: if $\mathfrak{c}$ is planar in $\mathcal{J}$, then it is planar in $\mathcal{I}$. Also note that 
$\Delta_{\mathbf{L}(\mathbf{t})}(\mathcal{I}_{\omega\alpha_{w}\gamma}^{mi_{1}i_{2}})=\Delta_{\mathbf{L}(\mathbf{t})}(\mathcal{I}_{\omega\alpha_{1}\alpha_{2}}^{mgi_{u}})=0$ for all $g\neq m$ and $\gamma\neq\omega$, as each such minor contains two proportional columns or rows, respectively. 
\end{rem}
\begin{rem}
\label{rem: analogy with Kuratowski}
In our context, the attribute ``planar'' is opposed to the term ``non-planar,'' which, in turn, is motivated by a correspondence between a non-planar key and one of the graphs $K_{3,3}$ or $K_{5}$ (defined below) that characterise obstructions to graph planarity in Kuratowski's theorem \citep[Thm. 2.3.8]{Oxley2011}. Before formalising this correspondence, we give an intuition of how index sets can be seen as nodes of a graph, connected via $\mathfrak{G}(\mathbf{L})$. Assume the existence of a non-planar key $\mathfrak{c}:=\{i_{1},i_{2}\}\times\{\alpha_{1},\alpha_{2}\}$, allowing us to find an index, say $i_{3}\in\mathcal{I}$, with $\mathcal{I}_{\alpha_{1}}^{i_{3}}\in\mathfrak{G}(\mathbf{L})$. 

First, suppose that $0\in\chi(\mathcal{I}\mid_{\alpha_{1}\alpha_{2}}^{i_{u}i_{3}})$ for some $u\in\{1,2\}$, say $h(\mathcal{J})=0$ where $\mathcal{J}:=\mathcal{I}_{\alpha_{1}\alpha_{2}}^{i_{2}i_{3}}$. We define $\mathfrak{g}_{r}:=\{i_{1},\alpha_{1},\alpha_{2}\}$, $\mathfrak{g}_{c}:=\{i_{2},i_{3}\}$, 
and introduce the following correspondence $k_{0}$: 
\begin{equation} 
k_{0}(u,\omega):=\mathcal{J}\setminus\{u\}\cup\{\omega\},\quad k_{0}(u,\mathfrak{g}_{c})=k_{0}(\mathfrak{g}_{c},u):=\mathcal{J}\setminus\mathfrak{g}_{r}\cup\mathfrak{g}_{c}\cup\{u\},\quad u,\omega\in\mathfrak{g}_{r}\cup\mathfrak{g}_{c}. 
\label{eq: obstruction planarity, K_33} 
\end{equation} 
For all $u,w\in\mathfrak{g}_{r}\cup\mathfrak{g}_{c}\cup\{\mathfrak{g}_{c}\}$, 
we say that $u$ and $w$ are related if either $k_{0}(u,w)$ or $k_{0}(w,u)$ yields a basis in $\mathfrak{G}(\mathbf{L})$ via (\ref{eq: obstruction planarity, K_33}). It is directly verified that this relation between elements of $\mathfrak{g}_{r}\cup\mathfrak{g}_{c}\cup\{\mathfrak{g}_{c}\}$ corresponds to $K_{3,3}$, the complete bipartite graph in which three nodes (here, the elements of $\mathfrak{g}_{r}$) are each connected to three others (the elements of $\mathfrak{g}_{c}\cup\{\mathfrak{g}_{c}\}$). 
The same argument holds, by a change of basis, if $\mathcal{J}:=\mathcal{I}_{\alpha_{2}}^{i_{3}}\notin\mathfrak{G}(\mathbf{L})$. In this case, we set $\mathfrak{g}_{r}:=\{i_{1},i_{2},\alpha_{2}\}$, $\mathfrak{g}_{c}:=\{\alpha_{1},i_{3}\}$, and proceed as before. 

When $0\notin \chi(\mathcal{I}\mid_{\alpha_{1}\alpha_{2}}^{i_{1}i_{3}})\cup \chi(\mathcal{I}\mid_{\alpha_{1}\alpha_{2}}^{i_{2}i_{3}})$, an analogous correspondence can be established with $K_{5}$, the complete graph with five nodes, by associating pairs of indices in $\mathfrak{g}_{r}\cup\mathfrak{g}_{c}$ with bases in $\mathfrak{G}(\mathbf{L})$ as follows:   
\begin{equation}
k_{1}(\alpha_{1},\alpha_{2}):=\mathcal{I},
\quad k_{1}(i_{u},\alpha_{w}):=\mathcal{I}_{\alpha_{w}}^{i_{u}},
\quad k_{1}(i_{s},i_{u}):=\mathcal{I}_{\alpha_{1}\alpha_{2}}^{i_{s}i_{u}},\quad s,u\in[3] ,\, s\neq u,\, w\in\{1,2\}  
\label{eq: obstruction planarity, K_5}
\end{equation}
so that all such pairs with distinct components are related. 

We can encompass both relations within a larger graph based on index sets. Let $N:=\wp_{0}[n]\cup\wp_{1}[n]\cup\wp_{2}[n]$ and take $\mathcal{H}\in\wp_{k+2}[n]$. For $\hat{u},\hat{w}\in N$, we use their symmetric difference $\hat{u}\Delta\hat{w}$ to define the relation 
\begin{equation} 
\hat{u}\underset{{\scriptstyle \mathcal{H}}}{\sim}\hat{w} \quad \overset{{\scriptstyle \mathrm{def}}}{\Leftrightarrow} \quad \mathcal{H}\setminus(\hat{u}\Delta\hat{w})\in\mathfrak{G}(\mathbf{L}),\quad\hat{u},\hat{w}\in N.
\label{eq: joint encoding reducing to matroid}
\end{equation} 
Setting $\mathcal{H}:=\mathcal{I}_{\alpha_{1}\alpha_{2}}$, we extend the relation defined by $k_{0}$ in (\ref{eq: obstruction planarity, K_33}) whether $\mathcal{I}_{\alpha_{1}\alpha_{2}}^{i_{2}i_{3}}\notin\mathfrak{G}(\mathbf{L})$ by considering  
\begin{equation}
N_{1}:=\left\{ \{i_{1},i_{2}\},\{\alpha_{1},\alpha_{2}\},\{i_{1},i_{3}\}\right\} ,\quad N_{2}:=\left\{ \{\alpha_{1},i_{1}\},\{\alpha_{2},i_{1}\},\emptyset\right\} 
\label{eq: joint encoding for K33}
\end{equation} 
since $\hat{u}\underset{{\scriptstyle \mathcal{H}}}{\sim}\hat{w}$ for all $\hat{u}\in N_{1}$ and $\hat{w}\in N_{2}$, thus including the graph $K_{3,3}$. Similarly, we recover the relation induced by $k_{1}$ in (\ref{eq: obstruction planarity, K_5}) and the associated graph $K_{5}$ when $\mathcal{I},\mathcal{I}_{\alpha_{w}}^{i_{u}},\mathcal{I}_{\alpha_{1}\alpha_{2}i_{u}}^{i_{1}i_{2}i_{3}}\in\mathfrak{G}(\mathbf{L})$ for all $u\in[3]$ and $w\in\{1,2\}$, noting that $\hat{u}\underset{{\scriptstyle \mathcal{H}}}{\sim}\hat{w}$ for all distinct $\hat{u},\hat{w}\in\left\{\{i_{1}\},\{i_{2}\},\{i_{3}\},\{\alpha_{1}\},\{\alpha_{2}\}\right\}$.  
\end{rem}
\begin{assumption}
\label{claim: existence 3-key} 
There exists a basis $\mathcal{I}\in\mathfrak{G}(\mathbf{L})$ that exhibits a non-planar key. 
\end{assumption}

\section{Main Contribution and Positioning within the Literature}
\label{sec: theoretical framework}

The results in the next sections identify Assumption \ref{claim: existence 3-key} as a local and structural property enabling global identifiability. This assumption is local since it refers to a substructure of $\mathfrak{G}(\mathbf{L})$ induced by a submatrix of $\mathbf{L}(\mathbf{1})$; it is structural since it does not depend on the set function $\Psi$ in Remark \ref{rem: notation for Psi} and the specific values of non-zero entries in $\mathbf{L}(\mathbf{1})$. These findings are formalised in Theorem \ref{thm: equality phases monomial}, which follows from Theorem \ref{thm: equality phases monomial generalised} presented in Section \ref{sec: propagation of separability between different bases}. 
\begin{thm}
\label{thm: equality phases monomial} 
Let $\mathbf{L}(\mathbf{t}),\mathbf{R}(\mathbf{t})$ be two matrices of complex functions of $d$ indeterminates $\mathbf{t}$
with $\mathfrak{G}(\mathbf{R}(\mathbf{1}))=\wp_{k}[n]$, and  $\Psi:\,\mathfrak{G}(\mathbf{L}(\mathbf{1}))\longrightarrow\mathbb{Z}^{d}$ be a mapping satisfying 
\begin{equation}
\Delta_{\mathbf{L}(\mathbf{t})}(\mathcal{I})\cdot\Delta_{\mathbf{R}(\mathbf{t})}(\mathcal{I})= g_{\mathcal{I}} \cdot \mathbf{t}^{\Psi(\mathcal{I})},\quad \mathcal{I}\in\mathfrak{G}(\mathbf{L}(\mathbf{1})), \, g_{\mathcal{I}}\in\mathbb{C}\setminus\{0\}. 
\label{eq: monomial terms of Cauchy-Binet expansion}
 \end{equation}
When $\max\{n-k,k\}\geq 5$, Assumption \ref{claim: existence 3-key} guarantees the existence of an element $\mathbf{m_{0}}\in\mathbb{Z}^{d}$ and a mapping $\psi:\,[n]\longrightarrow\mathbb{Z}^{d}$ such that 
\begin{equation}
\Delta_{\mathbf{L}(\mathbf{t})}(\mathcal{I})\cdot\Delta_{\mathbf{R}(\mathbf{t})}(\mathcal{I})=\mathbf{t}^{\mathbf{m}_{0}}\cdot\Delta_{\mathbf{L}(\mathbf{1})}(\mathcal{I})\cdot\Delta_{\mathbf{R}(\mathbf{1})}(\mathcal{I})\cdot\prod_{\alpha\in\mathcal{I}}\mathbf{t}^{\psi(\alpha)},\quad\mathcal{I}\in\wp_{k}[n].
\label{eq: separability of set function}
\end{equation}
\end{thm} 
This result applies to any mapping from $\mathfrak{G}(\mathbf{L})$ to a given list of $|\mathfrak{G}(\mathbf{L})|$ monomials, accounting for permutations acting via (\ref{eq: gauge invariance}). 
Thus, under Assumption \ref{claim: existence 3-key}, the pair $(\mathbf{L}(\mathbf{t}),\mathbf{R}(\mathbf{t}))$ induces the same expansion (\ref{eq: Cauchy-Binet expansion}) as the pair $(\mathbf{L}(\mathbf{1})\cdot\mathrm{diag}(\mathbf{t}^{\psi(\alpha)})_{\alpha\in[n]}, \mathbf{R}(\mathbf{1}))$, up to a common unit $\mathbf{t}^{\mathbf{m}_{0}}$ that is irrelevant for Pl\"{u}cker coordinates but plays a role in 
representing matrices encoding different score realisations (see Remark \ref{rem: self-compatibility score matrices}). Theorem \ref{thm: equality phases monomial} should also be interpreted in light of Remark \ref{rem: analogy with Kuratowski}, as a non-planar key prevents non-separability, just as the existence of a subgraph 
equivalent to $K_{3,3}$ or $K_{5}$ obstructs planarity. 

Finally, we show in Theorem \ref{thm: retrieval of canonical forms} that a weaker condition than 
Assumption \ref{claim: existence 3-key} 
suffices to retrieve a ``\emph{canonical}'' matricial form for $\mathbf{R}$ and $\mathbf{L}$ derived from the list of matrix products (\ref{eq: monomial terms of Cauchy-Binet expansion}), given a reference basis $\mathcal{I}\in\mathfrak{G}(\mathbf{L})$. 
The rest of the section discusses how these results relate to specific forms of uncertainty in factor modelling, IV estimation, and ambiguity---as framed by order-theoretic inconsistency. 

\subsection{Relation to Uncertainty in Instrumental Variable Estimation}
\label{subsec: relation to uncertainty in IV estimation}

IVs are a fundamental tool for addressing endogeneity---that is, deviations from the orthogonality condition between explanatory variables and residuals. Specifically, IVs $\mathbf{Z}$ are correlated with the endogenous variables $\mathbf{X}$ but uncorrelated with the error term, thereby associating with the dependent variable $\mathbf{y}$ through $\mathbf{X}$ only. This property can also be formalised in causal calculus \citep{Pearl2009}; algebraic expressions for partial correlations via minor ratios support IV generalisation and identifiability analysis across a broad class of graphical models \citep{Brito2002,Weihs2018}. 

IVs are also useful in factor analysis, where variables are partitioned into scaling (or reference) 
and non-scaling 
ones \citep{bollen2024model}. Once factor loadings and intercepts for scaling variables are fixed, the factor model can be reduced 
by algebraic elimination of latent factors 
\citep[Eqs. (2)-(3)]{bollen2024model}. Indicators acting as IVs are then selected to address error-variable correlations generated in the previous step. 
These methods include test statistics for instrument validity in over-identified cases---where the number of IVs $n$ exceeds the number of endogenous variables $k$. 

A consequence of over-identification is the non-uniqueness of estimators from various just-identified submodels, which can be aggregated via a weight matrix $\mathbf{W}\in\mathbb{R}^{k\times n}$ as in \citet[Prop. 1]{rotemberg1983instrument}: 
\begin{equation}
\hat{\beta} = \sum_{\mathcal{I}\in\wp_{k}[n]} \alpha_{\mathcal{I}}\cdot \hat{\beta}|_{\mathcal{I}}, \quad  
\hat{\beta}|_{\mathcal{I}} = (\mathbf{Z}|_{\mathcal{I}}^{\mathtt{T}}\cdot \mathbf{X})^{-1}\cdot \mathbf{Z}|_{\mathcal{I}}\cdot \mathbf{y}, \quad  
\alpha_{\mathcal{I}} = \frac{\Delta_{\mathbf{W}}(\mathcal{I})\cdot \Delta_{\mathbf{Z}^{\mathtt{T}}\cdot \mathbf{X}}(\mathcal{I})}{\det(\mathbf{W}\cdot \mathbf{Z}^{\mathtt{T}}\cdot \mathbf{X})}.   
    \label{eq: IV model} 
\end{equation} 
Here, $\hat{\beta}|_{\mathcal{I}}$ is the estimator for the just-identified model defined by instruments indexed by elements of $\mathcal{I}$. In this framework, over-identification introduces estimator arbitrariness under misspecification, which can be examined by varying weights in the decomposition (\ref{eq: IV model}) \citep[Sec. II]{rotemberg1983instrument}. This expression explicitly mirrors (\ref{eq: Cauchy-Binet expansion}), with each term reflecting the contribution of a just-identified submodel to the aggregated estimate. In our setting, a factor $\mathbf{W}(\mathbf{t})$ modulates the relative contributions of the different just-identified models; weights in (\ref{eq: toric deformation}) rescale these contributions without changing which submodels enter the combination, as the independence structure $\mathfrak{G}(\mathbf{L})$ is preserved.

\subsection{Relation to Factor Score Indeterminacy and Tetrad Analysis}
\label{subsec: relation to structural equations under partial information and factor indeterminacy}

Different forms of indeterminacy may affect factor models, meaning that factor scores cannot be uniquely reconstructed from observed variables and the covariance structure. Even when common and unique factor loadings are given, multiple solutions for a factor model can arise due to a rank condition \citep[Eqs. (5)-(6)]{steiger1979factor}. This uncertainty source has a geometric nature \citep{heermann1964geometry} with practical effects in psychological measurements, as indeterminacy from the space of factor score solutions represents a critical element affecting the analysis of conceptual constructs \citep{Rigdon2019}. 

Recently, alternatives to classical factor models have been proposed; relaxing assumptions related to the sparsity pattern of the matrix of (both common and unique) loadings allows defining novel optimisation problems to address both indeterminacy and numerical issues \citep{Elden2019}. Such approaches suggest that examining matrix decompositions and the geometry of solution spaces in greater generality can benefit both the analysis of factor score indeterminacy and the exploration of variant models. This approach is in line with the rationale underlying the present work, which exploits all information available from a structural (e.g., loading) matrix, encoding not only the sparsity of $\mathbf{L}$ but also higher-dimensional contributions via the independence structure $\mathfrak{G}(\mathbf{L})$.

In this regard, the notion of observability derived from $\mathfrak{G}(\mathbf{L})$ is central, as it acts as a filter to access partial information about the relations satisfied by the $Y$-terms. Note that previous studies on factor indeterminacy introduced alternative definitions of observability \citep{krijnen1998conditions}. Even that definition relies on the independence structure in the loading matrix $\bf{\Lambda}$, along with properties of the unique factors' covariance matrix. These conditions characterise cases where factors can be determined through predictors \citep[Result 1]{krijnen1998conditions}. The formalism we propose---together with its associated notion of observability---underpins \emph{compatibility conditions} for matrix representations involving multiple score realisations, as detailed below. 

\begin{rem}
\label{rem: self-compatibility score matrices} 

The issue of factor indeterminacy has been dealt with even in relation to the extension of a model through the inclusion of additional variables. In such cases, compatibility criteria can be established 
\citep[Thm. 2]
{mulaik1978effect} to address misalignments between sets of additional variables drawn from distinct domains or universes of content. Such misalignment also relates to a type of contextual effect; specifically, it may arise even when reference variables are shared across collections that do not meet compatibility conditions---see \citet[Sec. 5]{mulaik1978effect} for further details. 

We begin detailing indeterminacy, compatibility, and contextuality within our framework by extending it to 
a generic matrix $\mathbf{X}\in\mathbb{R}^{n\times s}$ with $s\neq k$. Indeed, the subspace spanned by $\mathbf{L}\cdot \mathbf{X}\in\mathbb{R}^{k\times s}$ admits a representation via its Pl\"{u}cker coordinates. When $k\leq s$, the expansion (\ref{eq: Cauchy-Binet expansion}) applies to each $\mathcal{H}\in \wp_{k}[s]$: 
\begin{equation} 
    \Delta_{\mathbf{L}(\mathbf{t}) \cdot \mathbf{X}(\mathbf{t})}(\mathcal{H}) = \det(\mathbf{L}(\mathbf{t}) \cdot \mathbf{X}(\mathbf{t})|_{\mathcal{H}}) = \sum_{\mathcal{I}\in\wp_{k}[n]} \Delta_{\mathbf{L}(\mathbf{t})}(\mathcal{I}) \cdot \Delta_{\mathbf{X}(\mathbf{t})|_{\mathcal{H}}}(\mathcal{I}),\quad \mathcal{H}\in \wp_{k}[s]  
    \label{eq: higher-order Plucker coordinates expansion} 
\end{equation} 
where $\mathbf{X}(\mathbf{t})|_{\mathcal{H}}$ is the submatrix of $\mathbf{X}(\mathbf{t})$ formed by the columns indexed by $\mathcal{H}$, with explicit dependence on $\mathbf{t}$. This form can be specified to support dual representations of factor models. In one perspective, $k$ is the number of observed variables, and $\mathbf{X}$ encodes $s$ realisations of factor scores. The dual specification takes $k$ as the number of common factors, capturing factor indeterminacy due to a non-trivial kernel---a rank-$k$ subspace of $\mathbb{R}^{n}$ orthogonal to the $(n-k)$-dimensional row span of the block matrix ${\bf\tilde{\Lambda}}:=({\bf\Lambda}|\idd_{n-k})$ of common factor loadings and uniquenesses. This kernel generates a space of solutions to the factor model \citep[Eq. (6)]{Rigdon2019}, extending to matrices $\mathbf{F}\in\mathbb{R}^{n\times N}$ of $N\geq k$ score realisations \citep[p. 167]{Elden2019}. For each $\mathcal{H}\in\wp_{k}[N]$ and any two solutions $\mathbf{F}_{1}$ and $\mathbf{F}_{2}$, the columns of $\mathbf{F}_{1}|_{\mathcal{H}}-\mathbf{F}_{2}|_{\mathcal{H}}$, when linearly independent, span the same subspace as the rows of $\mathbf{L}:=(\idd_{k}|-\bf{\Lambda}^{\mathtt{T}})$, the ``dual'' representation of the structural matrix discussed in Remark \ref{rem: Grassmann-Plucker coordinates and factor score representations}. Thus, working in the space of such score differences provides an invariant for this family of solutions via the Pl\"{u}cker coordinates of $\mathbf{L}$. Any generic matrix $\mathbf{X}\in\mathbb{R}^{n\times k}$---e.g., external variables or solutions to alternative factor models---may align differently with factor scores across solutions, 
yet their coupling with the subspace of solution differences, represented by $\mathbf{L}$, remains invariant, generalising to $s\geq k$ realisations via (\ref{eq: higher-order Plucker coordinates expansion}). 
While any term multiplying all summands in (\ref{eq: higher-order Plucker coordinates expansion}) is immaterial for each given $\mathcal{H}$, such weights entail compatibility constraints when distinct $k$-subsets of realisations from $\mathbf{X}(\mathbf{t})$ are analysed together, as we now discuss. 

Although the preceding argument offers a description suited to the intrinsic indeterminacy arising from rank conditions, it presumes knowledge of the structural matrix, which itself may be affected by model uncertainty. Indeed, ``true'' factor scores are unobserved (even assuming they can be meaningfully defined), and uncertainty may affect any pair of $k$ score realisations extracted from two solutions, as well as their difference. As above, this uncertainty carries over to the structural matrix $\mathbf{L}$, even when its qualitative pattern $\mathfrak{G}(\mathbf{L})$ is assumed known. The multiplicative parameterisation (\ref{eq: toric deformation}) accommodates different interpretations of such uncertainty---from freely specifiable scaling across realisations to sensitivity analysis probing the structural conditions implied by $\mathfrak{G}(\mathbf{L})$, as discussed in the next section. Theorem \ref{thm: equality phases monomial} provides a compatibility condition linking these weighting parameters 
to the scaling of $s>k$ realisations encoded in $\mathbf{X}(\mathbf{t})$. With Assumption \ref{claim: no Y=-1} satisfied for any admissible selection $\mathbf{X}(\mathbf{t})|_{\mathcal{H}}$ with $\mathcal{H}\in\wp_{k}[s]$, the differences $\Psi_{\mathcal{H}}\left(h(\mathcal{I}_{\alpha}^{i})\right)-\Psi_{\mathcal{H}}\left(h(\mathcal{I})\right)\in\mathbb{Z}^{d}$ relative to the subset $\mathcal{H}$ for $\mathcal{I},\mathcal{I}_{\alpha}^{i}\in\mathfrak{G}(\mathbf{L})$, with corresponding weight ratios $\mathbf{t}^{\Psi_{\mathcal{H}}\left(h(\mathcal{I}_{\alpha}^{i})\right)}/\mathbf{t}^{\Psi_{\mathcal{H}}\left(h(\mathcal{I})\right)}$, are key to assessing the identifiability of the factor model representation. The proof of Theorem \ref{thm: equality phases monomial generalised} shows that, under separability, these differences enable the construction of a vector of scaling exponents $\left(\psi(1),\dots,\psi(n)\right)$ such that the same matrix 
 $\mathbf{L}(\mathbf{t})=\mathbf{L}(\mathbf{1})\cdot \delta(\mathbf{t})$, with $\delta(\mathbf{t}):=\mathrm{diag}\left(\mathbf{t}^{\psi(1)},\dots,\mathbf{t}^{\psi(n)}\right)$, 
satisfies (\ref{eq: separable deformations}) for all admissible subsets $\mathcal{H}$, with $\mathbf{R}(\mathbf{t}):=\mathbf{X}(\mathbf{t})|_{\mathcal{H}}$. The independence of such 
weight ratios on the basis $\mathcal{I}\in\mathfrak{G}(\mathbf{L})$, entailed by separability (Lemma \ref{lem: independence on base-set}), reflects consistency in comparing the contributions of the $i$-th and $\alpha$-th dimensions, ensuring that these component-specific comparisons do not depend on the context---i.e., on the remaining elements of the basis $\mathcal{I}$. In this sense, when separability fails, contextual inconsistency in dimension-wise scaling may emerge as a consequence of the factor structure geometry, made explicit by the proposed subspace representation. In Section \ref{subsec: counterexamples and relations to choice theory}, we address such contextual effects of basis selection on dimension-wise comparisons through reversals in orderings induced by weights. 

When $k<s$, we can examine the independence of such weight ratios derived from $\Psi_{\mathcal{H}}$ even across realisations by allowing $\mathcal{H}$ to vary over $\wp_{k}[s]$. Each contribution to the $\mathcal{H}$-indexed Pl\"{u}cker coordinate in (\ref{eq: higher-order Plucker coordinates expansion}) takes the form $\Delta_{\mathbf{L}(\mathbf{1})\cdot \delta(\mathbf{t})}(\mathcal{I})\cdot\Delta_{\mathbf{R}(\mathbf{1})}(\mathcal{I})\cdot\mathbf{t}^{\mathbf{m_{0}}(\mathcal{H})}$, as implied by (\ref{eq: separable deformations}) and Theorem \ref{thm: equality phases monomial} under Assumption \ref{claim: existence 3-key}. The common unit $\mathbf{t}^{\mathbf{m_{0}}(\mathcal{H})}$ is independent of $\mathcal{I}$, but it may vary with $\mathcal{H}$. By fixing any $\mathcal{I}\in\mathfrak{G}(\mathbf{L})$ and the corresponding expression $\Delta_{\mathbf{L}(\mathbf{1})\cdot \delta(\mathbf{t})}(\mathcal{I})$ for $\Delta_{\mathbf{L}(\mathbf{t})}(\mathcal{I})$, this form gives $\Delta_{\mathbf{X}(\mathbf{t})|_{\mathcal{H}}}(\mathcal{I})=\Delta_{\mathbf{X}(\mathbf{1})|_{\mathcal{H}}}(\mathcal{I})\cdot\mathbf{t}^{\mathbf{m_{0}}(\mathcal{H})}$. Then, the three-term Grassmann-Pl\"{u}cker relations applied to the generic matrix $\mathbf{X}(\mathbf{t})|^{\mathcal{I}}$, formed by selecting the rows of $\mathbf{X}(\mathbf{t})$ indexed by $\mathcal{I}$, impose the separability of $\mathbf{m_{0}}$: 
$
    \mathbf{m_{0}}(\mathcal{H})+\mathbf{m_{0}}(\mathcal{H}_{\alpha\beta}^{ij}) = \mathbf{m_{0}}(\mathcal{H}_{\alpha}^{i})+\mathbf{m_{0}}(\mathcal{H}_{\beta}^{j}) = \mathbf{m_{0}}(\mathcal{H}_{\alpha}^{j})+\mathbf{m_{0}}(\mathcal{H}_{\beta}^{i})
$ 
holds for each $\mathcal{H}\in \wp_{k}[s]$, 
$i,j\in\mathcal{H}$, and $\alpha,\beta\in\mathcal{H}^{\mathtt{C}}$. 
Thus, the proof of Theorem \ref{thm: equality phases monomial generalised} applies to $\mathbf{X}(\mathbf{t})|^{\mathcal{I}}$ and $\mathbf{m_{0}}$ as well. 
These implications convey information on the compatibility of $\mathbf{m_{0}}(\mathcal{H})$ across different choices of $\mathcal{H}\in\wp_{k}[s]$ and show how the weighting parameters $\mathbf{t}$ reveal scaling consistency across dimensions and realisations that are not evident when restricting to the unweighted case $\mathbf{t}=\mathbf{1}$. 
\end{rem} 

Finally, note that minors-based methods providing algebraic invariants have been used to generalise the tetrad approach in SEM 
\citep{Bollen2000} 
and obtain test statistics \citep{Drton2007} as well as graphical characterisations of vanishing minors \citep{Sullivant2010,Drton2018}. Here, extended tetrads (\ref{eq: cross-section}) support identifiability analysis and the construction of a canonical form of the factor $\mathbf{R}(\mathbf{t})$---and hence, the structural matrix $\mathbf{L}(\mathbf{t})$ compatible with our representation, starting from (\ref{eq: toric deformation}); see Theorem \ref{thm: retrieval of canonical forms}. Under Assumption \ref{claim: existence 3-key}, determinantal configurations (\ref{eq: separable deformations}) are precisely those yielding constant $Y$-terms (see Remark \ref{rem: constant Y-terms}), thus preserving the factor $\mathbf{R}$ in such a canonical form under weighting of the type (\ref{eq: toric deformation}). Remarkably, the main hypothesis underlying Theorem \ref{thm: retrieval of canonical forms} is also instrumental in examining the propagation of separability between different bases (Section \ref{sec: propagation of separability between different bases}).

\subsection{Relation to Algebraic Modelling in Sensitivity Analysis} 
\label{subsec: relation to sensitivity analysis in monomial models}  

The weighting scheme (\ref{eq: toric deformation}) can be viewed as a deformation $(\mathbf{L},\mathbf{R})\mapsto \left(\mathbf{L}(\mathbf{t}),\mathbf{R}(\mathbf{t})\right)$, introducing a monomial parameterisation into our factorisation framework. Monomial parameterisations are widely used in graphical models; see \citet{Leonelli2022} for details. This formulation facilitates sensitivity analysis by enabling the study of a special class of algebraic perturbations in model parameters and their effects on a probability distribution \citep{Leonelli2022}. 
Similar parameterisations arise in other graphical models. For example, in certain SEMs, covariances are expressed as sums of monomials associated with treks in a graph, encoding different effects in the model \citep{Weihs2018} and supporting estimation procedures \citep{ernst2023note}. 
In our setting, sensitivity analysis refers to 
canonical forms of factors derived from extended tetrads ($Y$-terms), which we examine via multiplicative perturbations (or scaling) of the terms in (\ref{eq: Cauchy-Binet expansion}) or (\ref{eq: IV model}). Moreover, the function $\Psi$ in (\ref{eq: monomial terms of Cauchy-Binet expansion}) helps identify which $Y$-terms remain constant and which vary, possibly vanishing at specific evaluation points $\mathbf{t}_{0}$. Indeed, $\Psi$ can also be used to control the simultaneous vanishing of terms (\ref{eq: toric deformation}) from a family $\mathfrak{F}\subseteq\mathfrak{G}(\mathbf{L})$ whenever there exists an index $u\in[d]$ such that $\Psi(\mathcal{H})_{u} > 0$ if $\mathcal{H}\in\mathfrak{F}$ and $\Psi(\mathcal{H})_{u} = 0$ otherwise, by evaluating $\mathbf{t}$ at $t_{u}=0$. More generally, constraints on $\Psi$ implied by Theorem \ref{thm: equality phases monomial} reflect on the properties of families of bases yielding vanishing terms at specific values of $\mathbf{t}$. 

Any non-zero choice of parameters $\mathbf{t}$ in (\ref{eq: toric deformation}) makes the deformation \emph{reversible}, meaning that it is invertible within a specified algebraic structure---here, the polynomial algebra $\mathbb{C}[\mathbf{t},\mathbf{t}^{-1}]$. These deformations based on the units in $\mathbb{C}[\mathbf{t},\mathbf{t}^{-1}]$ preserve the original information, as applying the inversion $\mathbf{t^{-1}}$ starting from the deformed minor products returns the original ones. As we shall see in Section \ref{subsec: principal minors and hyperdeterminants}, a key quantity for assessing the compatibility of such deformations with determinantal conditions and Assumption \ref{claim: no Y=-1} is the \emph{hyperdeterminant} \citep[Ch. 14]{GKZ1994}. This quantity emerges from the coupling of Grassmann-Pl\"{u}cker relations for $\mathbf{R}(\mathbf{t})$ and $\mathbf{L}(\mathbf{t})$. 
We remark that, beyond their role in studying simultaneous algebraic relations, hyperdeterminants are also instrumental in analysing independence \citep{Boege2018} and entropy \citep{Shadbakht2008} of Gaussian random variables. 


\subsection{Relation to the IIA Axiom and Ambiguity}
\label{subsec: counterexamples and relations to choice theory}

For a generic point $\mathbf{t}_{0}$, the function $\Psi$ in (\ref{eq: toric deformation}) induces an order on $\mathfrak{G}(\mathbf{L})$ by comparing norms $\Vert \mathbf{t}_{0}^{\Psi(\mathcal{I})} \Vert$ for $\mathcal{I}\in\mathfrak{G}(\mathbf{L})$. This scalarisation and the derived order depend only on $\mathfrak{G}(\mathbf{L})$ and $\Psi$, in line with our structure-based approach. For the separable configurations expressed by (\ref{eq: separable deformations}) or (\ref{eq: separability of set function}), $\Psi$ is \emph{additive} in component-wise contributions via the function $\psi$, which induces an order on the corresponding label set $[n]$. More generally, a function $\varphi:\,[n]\longrightarrow\mathbb{Z}^{d}$ returns, via scalarisation, a means to compare dimension-specific contributions through the norms $\Vert\mathbf{t}_{0}^{\varphi(\alpha)}\Vert$ for $\alpha\in[n]$, and we can examine the consistency of this relation on $[n]$ with the ordering on $\mathfrak{G}(\mathbf{L})$. 

The counterexample in Section \ref{subsubsec: Reduction to Principal Minor Assignment} 
shows that when $\Psi$ is non-additive, dimension-specific comparisons may depend on the choice of basis in $\mathfrak{G}(\mathbf{L})$ used for assessment. This dependence contrasts with the IIA axiom, one of the fundamental properties defining rational preferences \citep[Chap. 1.C]{Luce2005}. These orderings, while not expressing preference in our setting, reflect comparative relations between contributions to uncertainty structured within the model.  
The IIA requires that the ordering between two components remain unaffected by the auxiliary dimensions completing the basis with each of them. 

A primary manifestation of uncertainty related to violations of the IIA axiom is ambiguity, as effectively represented by Ellsberg's urn model \citep[Sec. 4]{Aerts2018}. This model can be characterised by the non-additive behaviour of subjective weights assigned to events \citep[Sec. 3.4]{Fishburn1986}. Our construction aligns with prior studies linking epistemic uncertainty to order inequivalence across alternative representations of conceptual models encoding Ellsberg-type ambiguity \citep{
angelelli2024}. 

Finally, we note a potential connection between the (non-)additivity of $\Psi$ and selected notions in Dempster-Shafer evidence theory; see \citet[Sec. 2]{Cuzzolin2020} for background. For any $\mathbf{n}_{0}\in\mathbb{Z}^{d}$, the shift $\Psi+\mathbf{n}_{0}$ generates a new configuration preserving (non-)separability. Then, an appropriate selection of $\mathbf{t}_{0}$ with $\Vert\mathbf{t}_{0}^{\mathbf{n}_{0}}\Vert\notin\{0,1\}$ and $G\in\mathbb{Z}$ with $|G|$ large enough ensures $\Vert \mathbf{t}_{0}^{G\cdot \mathbf{n}_{0}}\Vert > \Vert \mathbf{t}_{0}^{-\Psi(\mathcal{I})}\Vert $ for all $\mathcal{I}\in\mathfrak{G}(\mathbf{L})$, yielding positive log-weights 
$\log(\Vert\mathbf{t}_{0}^{\Psi(\mathcal{I})+G\cdot \mathbf{n}_{0}}\Vert)$. With appropriate scaling $\mathbf{t}_{0}\mapsto \mathbf{t}_{0}^{A}$ for $A>0$, these log-weights can be normalised to form the masses of a \emph{basic probability assignment} (BPA) whose focal elements---subsets of $[n]$ with non-zero mass; see \citet[Def. 4]{Cuzzolin2020}---are indexed by $\mathfrak{G}(\mathbf{L})$. Under the assumptions of Theorem \ref{thm: equality phases monomial}, one may further construct a \emph{Bayesian} BPA (whose focal elements are singletons; see \citealp[Sec. 2]{Cuzzolin2010}) with masses $\log(\Vert\mathbf{t}_{0}^{a\cdot (\psi(\alpha)+(\mathbf{m}_{0}+g\cdot \mathbf{n_{0}})/k)}\Vert)$, with suitable $g,a\in\mathbb{R}$ ensuring positivity and normalisation, where the associated belief function \citep[Eq. (2.2)]{Cuzzolin2020}, restricted to $\mathfrak{G}(\mathbf{L})$, is expressed via $\Psi$. These observations suggest alignment with existing geometric frameworks for epistemic or non-probabilistic uncertainty, motivating further study. 

\section{Local Separability} 
\label{sec: principal minors and hyperdeterminants}

This section examines how the assumptions in Section \ref{subsec: Assumptions} impose \emph{local} constraints on the scaling of the determinantal terms---specifically, with a focus on the extended tetrads (\ref{eq: cross-section}) relative to a given basis $\mathcal{I}$. 

\subsection{Preliminary Lemmas} 
\label{sec: preliminary lemmas} 

The following lemmas and remarks provide auxiliary results for use in subsequent sections. 
\begin{lem}
\label{lem: chain of non-zero} Let $\mathcal{H},\mathcal{K}\in\mathfrak{G}(\mathbf{L})$ with $r:=|\mathcal{H}\backslash\mathcal{K}|$. Then, there exists a finite sequence $\mathcal{L}^{(0)}:=\mathcal{H}$, $\mathcal{L}^{(1)},\dots$, $\mathcal{L}^{(r)}:=\mathcal{K}$ of elements of $\mathfrak{G}(\mathbf{L})$ such that $|\mathcal{L}^{(u-1)}\Delta\mathcal{L}^{(u)}|=2$,
$u\in[r]$. 
\end{lem}
\begin{proof}
It follows from the exchange property of matroids (\ref{eq: exchange relation}); see \citet[Lemma 6]{Angelelli2019}. 
\end{proof}
\begin{lem}
\label{lem: algebraic forms Y-terms}
For observable sets $\chi(\mathcal{I}\mid_{\alpha\beta}^{ij})$, at $h(\mathcal{I})\cdot h(\mathcal{I}_{\alpha\beta}^{ij})=0$ we find that 
\begin{equation}
Y_{\alpha\beta}^{ij}=-\frac{h(\mathcal{I}_{\alpha}^{i})\cdot h(\mathcal{I}_{\beta}^{j})}{h(\mathcal{I}_{\beta}^{i})\cdot h(\mathcal{I}_{\alpha}^{j})}
\label{eq: special rational, monomial}
\end{equation}
is a non-zero monomial, while at $h(\mathcal{I}_{\alpha}^{i})\cdot h(\mathcal{I}_{\beta}^{j})=0$ we get 
 \begin{equation}
Y_{\alpha\beta}^{ij}=\frac{h(\mathcal{I})\cdot h(\mathcal{I}_{\alpha\beta}^{ij})}{h(\mathcal{I}_{\beta}^{i})\cdot h(\mathcal{I}_{\alpha}^{j})}-1.
\label{eq: special rational, binomial}
 \end{equation}
\end{lem}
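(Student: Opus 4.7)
The plan is to derive both identities from the three-term Grassmann-Pl\"{u}cker relations (\ref{eq: three-term Grassmann-Plucker relations}) applied to $\mathbf{L}(\mathbf{t})$ and $\mathbf{R}(\mathbf{t})$ separately, using Assumption \ref{claim: no Y=-1} to move freely between vanishing of $h(\mathcal{I})$ and vanishing of $\Delta_{\mathbf{L}(\mathbf{t})}(\mathcal{I})$. Since $\mathbf{R}(\mathbf{t})$ is generic, $h(\mathcal{K})=0$ is equivalent to $\Delta_{\mathbf{L}(\mathbf{t})}(\mathcal{K})=0$, which is what lets the Grassmann-Pl\"{u}cker relation for $\mathbf{L}(\mathbf{t})$ act as a linear dependence among the remaining $\Delta_{\mathbf{L}}$-products in each of the two vanishing regimes.

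First I would record the ``universal'' auxiliary identity obtained by dividing the Grassmann-Pl\"{u}cker relation for $\mathbf{R}(\mathbf{t})$ through by the (non-zero) product $\Delta_{\mathbf{R}(\mathbf{t})}(\mathcal{I}_{\beta}^{i})\cdot\Delta_{\mathbf{R}(\mathbf{t})}(\mathcal{I}_{\alpha}^{j})$, namely
\begin{equation}
Y_{\alpha\beta}^{ij}+1=c_{2}\cdot\frac{\Delta_{\mathbf{R}(\mathbf{t})}(\mathcal{I})\cdot\Delta_{\mathbf{R}(\mathbf{t})}(\mathcal{I}_{\alpha\beta}^{ij})}{\Delta_{\mathbf{R}(\mathbf{t})}(\mathcal{I}_{\beta}^{i})\cdot\Delta_{\mathbf{R}(\mathbf{t})}(\mathcal{I}_{\alpha}^{j})}.
\label{eq: aux identity Y+1}
\end{equation}
This is the analogue of (\ref{eq: Grassmann-Plucker translation exchange, vertical}) read as an algebraic constraint rather than a coordinate change, and it will be the bridge between ratios of $\Delta_{\mathbf{R}}$-minors and ratios of $h$-products once an analogous input for $\mathbf{L}(\mathbf{t})$ is fed in.

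For the first regime, $h(\mathcal{I})\cdot h(\mathcal{I}_{\alpha\beta}^{ij})=0$ gives $\Delta_{\mathbf{L}(\mathbf{t})}(\mathcal{I})\cdot\Delta_{\mathbf{L}(\mathbf{t})}(\mathcal{I}_{\alpha\beta}^{ij})=0$, so that the right-hand side of the Grassmann-Pl\"{u}cker relation for $\mathbf{L}(\mathbf{t})$ vanishes and yields $\Delta_{\mathbf{L}}(\mathcal{I}_{\alpha}^{i})\Delta_{\mathbf{L}}(\mathcal{I}_{\beta}^{j})=-c_{1}c_{2}\Delta_{\mathbf{L}}(\mathcal{I}_{\beta}^{i})\Delta_{\mathbf{L}}(\mathcal{I}_{\alpha}^{j})$. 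Multiplying both sides by the associated $\Delta_{\mathbf{R}(\mathbf{t})}$-products reshapes this as $h(\mathcal{I}_{\alpha}^{i})h(\mathcal{I}_{\beta}^{j})/\bigl(h(\mathcal{I}_{\beta}^{i})h(\mathcal{I}_{\alpha}^{j})\bigr)=-c_{1}c_{2}\cdot\Delta_{\mathbf{R}}(\mathcal{I}_{\alpha}^{i})\Delta_{\mathbf{R}}(\mathcal{I}_{\beta}^{j})/\bigl(\Delta_{\mathbf{R}}(\mathcal{I}_{\beta}^{i})\Delta_{\mathbf{R}}(\mathcal{I}_{\alpha}^{j})\bigr)=-Y_{\alpha\beta}^{ij}$, which is (\ref{eq: special rational, monomial}); since all four $h$-terms on the right are units and the overall sign is explicit, $Y_{\alpha\beta}^{ij}$ is displayed as a ratio of monomials, hence itself a unit.

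For the second regime, $h(\mathcal{I}_{\alpha}^{i})h(\mathcal{I}_{\beta}^{j})=0$ kills the $c_{1}$-term in the Grassmann-Pl\"{u}cker relation for $\mathbf{L}(\mathbf{t})$, leaving $\Delta_{\mathbf{L}}(\mathcal{I})\Delta_{\mathbf{L}}(\mathcal{I}_{\alpha\beta}^{ij})=c_{2}\Delta_{\mathbf{L}}(\mathcal{I}_{\beta}^{i})\Delta_{\mathbf{L}}(\mathcal{I}_{\alpha}^{j})$. Multiplying through by $\Delta_{\mathbf{R}(\mathbf{t})}(\mathcal{I})\Delta_{\mathbf{R}(\mathbf{t})}(\mathcal{I}_{\alpha\beta}^{ij})$ and dividing by $h(\mathcal{I}_{\beta}^{i})h(\mathcal{I}_{\alpha}^{j})$ converts this into $h(\mathcal{I})h(\mathcal{I}_{\alpha\beta}^{ij})/\bigl(h(\mathcal{I}_{\beta}^{i})h(\mathcal{I}_{\alpha}^{j})\bigr)$ equal to the right-hand side of (\ref{eq: aux identity Y+1}); substituting via (\ref{eq: aux identity Y+1}) and solving for $Y_{\alpha\beta}^{ij}$ yields (\ref{eq: special rational, binomial}).

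The only delicate point, and the thing I would make sure to nail down first, is that in each regime the four $h$-terms in the denominator of the target expression are non-zero, so that the ratios are legitimate. This is where observability of $\chi(\mathcal{I}\mid_{\alpha\beta}^{ij})$ is invoked: in case one the two factors $h(\mathcal{I}_{\alpha}^{i})h(\mathcal{I}_{\beta}^{j})$ and $h(\mathcal{I}_{\beta}^{i})h(\mathcal{I}_{\alpha}^{j})$ must be simultaneously non-zero (otherwise the whole $\chi$ would collapse to $\{0\}$, violating observability), and likewise for case two, so the denominators in (\ref{eq: special rational, monomial}) and (\ref{eq: special rational, binomial}) are forced to be units; no appeal to the ``key'' property, which is stronger, is needed.
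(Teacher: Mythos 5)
Your proposal is correct, and it reaches the two formulas by a genuinely different, though closely related, route. The paper multiplies the three-term relations (\ref{eq: three-term Grassmann-Plucker relations}) for $\mathbf{L}(\mathbf{t})$ and $\mathbf{R}(\mathbf{t})$ side by side to get the single identity (\ref{eq: quartic from Grassmann-Plucker, 2}), quadratic in $Y_{\alpha\beta}^{ij}$, and then obtains (\ref{eq: special rational, monomial}) by cancelling the factor $1+Y_{\alpha\beta}^{ij}$ (licensed by Assumption \ref{claim: no Y=-1}) and (\ref{eq: special rational, binomial}) by dropping the terms containing $h(\mathcal{I}_{\alpha}^{i})\cdot h(\mathcal{I}_{\beta}^{j})$. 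You instead keep the two Grassmann-Pl\"{u}cker relations separate: genericity of $\mathbf{R}(\mathbf{t})$ turns each vanishing hypothesis on $h$ into vanishing of the corresponding $\Delta_{\mathbf{L}}$-products, the degenerate $\mathbf{L}$-relation then yields a two-term proportionality, and you transfer it back to $h$ via the definition (\ref{eq: cross-section}) in the first regime and via your identity for $Y_{\alpha\beta}^{ij}+1$ (which is (\ref{eq: Grassmann-Plucker translation exchange, vertical}) read as a constraint) in the second. What each route buys: the paper's combined identity is reused later (it underlies Lemma \ref{lem: case B^(ij)_(ab) square} and the polynomial (\ref{eq: polynomial h to Y})), whereas your version makes explicit what the paper leaves to ``direct computation'', namely that observability together with the degenerate $\mathbf{L}$-relation forces the denominators in (\ref{eq: special rational, monomial})--(\ref{eq: special rational, binomial}) to be non-zero, and in the first regime it avoids the division by $1+Y_{\alpha\beta}^{ij}$ altogether. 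One caution on the conclusion: what either argument actually delivers in the first regime is that $Y_{\alpha\beta}^{ij}$ is a unit in $\mathbb{C}(\mathbf{t})$ (a non-vanishing monomial), which is how the lemma is later invoked (e.g. in Lemma \ref{lem: one configuration from 2-key}); constancy over $\mathbb{C}$ is not automatic---it is precisely the integrability condition characterised in Lemma \ref{lem: case B^(ij)_(ab) square}---so your noncommittal ``hence itself a unit'' is the right level of claim, and you should not strengthen it to match the statement's ``unit in $\mathbb{C}$'' literally.
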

\begin{proof}
Multiplying (\ref{eq: three-term Grassmann-Plucker relations}) for $\mathbf{L}(\mathbf{t})$ and $\mathbf{R}(\mathbf{t})$ side by side, and taking into account that $Y_{\alpha\beta}^{ij}\neq0$, we obtain  
 \begin{equation}
h(\mathcal{I})\cdot h(\mathcal{I}_{\alpha\beta}^{ij})\cdot Y_{\alpha\beta}^{ij}=\left(h(\mathcal{I}_{\alpha}^{i})\cdot h(\mathcal{I}_{\beta}^{j})+h(\mathcal{I}_{\beta}^{i})\cdot h(\mathcal{I}_{\alpha}^{j})\cdot Y_{\alpha\beta}^{ij}\right)\cdot(Y_{\alpha\beta}^{ij}+1). 
\label{eq: quartic from Grassmann-Plucker, 2}
\end{equation}
Then the thesis follows by direct computation, using $Y_{\alpha\beta}^{ij}\notin\{0,-1\}$ as per Assumption \ref{claim: no Y=-1}. 
\end{proof}
\begin{lem}
\label{lem: case B^(ij)_(ab) square} An observable set $\chi(\mathcal{I}\mid_{\alpha\beta}^{ij})$ is separable if and only if $Y_{\alpha\beta}^{ij}\in\mathbb{C}$. 
\end{lem}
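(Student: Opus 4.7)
The plan is to pivot around the quartic identity (\ref{eq: quartic from Grassmann-Plucker, 2}) obtained in the proof of Lemma \ref{lem: algebraic forms Y-terms}, which I would rewrite as
\begin{equation*}
h(\mathcal{I})\cdot h(\mathcal{I}_{\alpha\beta}^{ij}) = \left(1+\tfrac{1}{Y_{\alpha\beta}^{ij}}\right) h(\mathcal{I}_{\alpha}^{i})\,h(\mathcal{I}_{\beta}^{j}) + (1+Y_{\alpha\beta}^{ij})\,h(\mathcal{I}_{\beta}^{i})\,h(\mathcal{I}_{\alpha}^{j}).
\end{equation*}
By the monomial hypothesis (\ref{eq: toric deformation}), every $h(\cdot)$ appearing here is either zero or a unit of $\mathbb{C}(\mathbf{t})$, and Assumption \ref{claim: no Y=-1} ensures that both $1+Y_{\alpha\beta}^{ij}$ and $1+1/Y_{\alpha\beta}^{ij}$ are nonzero. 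A first observation is that this combination rules out any observable configuration with exactly one nonzero entry in $\chi(\mathcal{I}\mid_{\alpha\beta}^{ij})$, so the whole analysis reduces to configurations with at least two nonzero products.

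For the implication $Y_{\alpha\beta}^{ij}\in\mathbb{C}\Rightarrow$ integrability, I would traverse the admissible zero patterns. In each of them the displayed identity becomes an equality between $\mathbb{C}$-linear combinations of monomials with nonzero constant coefficients; the requirement that every $h(\cdot)$ be either zero or a single monomial then forces every pair of monomials appearing on either side to share a common exponent, yielding $\#\Psi(\chi(\mathcal{I}\mid_{\alpha\beta}^{ij})\setminus\{0\}) = 1$.

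For the converse, I case-split again on the zero pattern. When $h(\mathcal{I})\cdot h(\mathcal{I}_{\alpha\beta}^{ij}) = 0$ or $h(\mathcal{I}_{\alpha}^{i})\cdot h(\mathcal{I}_{\beta}^{j}) = 0$, the explicit formulas (\ref{eq: special rational, monomial}) and (\ref{eq: special rational, binomial}) of Lemma \ref{lem: algebraic forms Y-terms} express $Y_{\alpha\beta}^{ij}$ as a ratio of the nonzero $h(\cdot)$ products of $\chi(\mathcal{I}\mid_{\alpha\beta}^{ij})$; by integrability, such ratios are ratios of monomials with identical exponent and hence collapse to elements of $\mathbb{C}$. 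The symmetric case with only $h(\mathcal{I}_{\beta}^{i})\cdot h(\mathcal{I}_{\alpha}^{j}) = 0$ is handled by solving the displayed identity directly for $Y_{\alpha\beta}^{ij}$. The genuinely new case is that of a key with all three products sharing a common exponent $\mathbf{e}$: writing them as $a\,\mathbf{t}^{\mathbf{e}}, b\,\mathbf{t}^{\mathbf{e}}, c\,\mathbf{t}^{\mathbf{e}}$ with $a,b,c\in\mathbb{C}\setminus\{0\}$, substituting into the displayed identity, clearing $\mathbf{t}^{\mathbf{e}}$ and multiplying by $Y_{\alpha\beta}^{ij}$ produces the quadratic $c\,Y^{2}+(b+c-a)\,Y+b = 0$ with coefficients in $\mathbb{C}$. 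Since $\mathbb{C}$ is algebraically closed in the purely transcendental extension $\mathbb{C}(\mathbf{t})$, any $Y\in\mathbb{C}(\mathbf{t})$ satisfying such a polynomial identity must be constant, giving $Y_{\alpha\beta}^{ij}\in\mathbb{C}$.

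The main obstacle is the clean bookkeeping of the zero patterns in $\chi(\mathcal{I}\mid_{\alpha\beta}^{ij})$ consistently with observability and with Assumption \ref{claim: no Y=-1}; once this is in place, each implication reduces to a direct algebraic consequence of the Grassmann-Pl\"{u}cker identity and the algebraic closure of $\mathbb{C}$ in $\mathbb{C}(\mathbf{t})$.
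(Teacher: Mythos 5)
Your proof is correct and follows the same skeleton as the paper's: the degenerate cases with $0\in\chi(\mathcal{I}\mid_{\alpha\beta}^{ij})$ are handled through the explicit forms (\ref{eq: special rational, monomial})--(\ref{eq: special rational, binomial}), and the key case through the quadratic in $Y_{\alpha\beta}^{ij}$ obtained by rearranging (\ref{eq: quartic from Grassmann-Plucker, 2}). Where you genuinely differ is in how the key case is closed: the paper appeals to the criterion that the discriminant of that quadratic is a perfect square in $\mathbb{C}(\mathbf{t})$ if and only if the set is integrable, delegating the verification to \cite{Angelelli2021}, whereas you note that integrability makes the coefficients of the quadratic constant, so $Y_{\alpha\beta}^{ij}$ is algebraic over $\mathbb{C}$ and hence constant, while conversely a constant $Y_{\alpha\beta}^{ij}$ forces the nonzero monomials in the rearranged identity to share a common exponent (your preliminary observation that observability excludes patterns with exactly one nonzero product, thanks to Assumption \ref{claim: no Y=-1}, is also worth keeping, as the paper leaves it implicit). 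This buys a self-contained argument that avoids both the discriminant computation and the external citation. One small imprecision: you state the final step for $Y_{\alpha\beta}^{ij}\in\mathbb{C}(\mathbf{t})$, which is not granted a priori in this framework (compare Lemma \ref{lem: quadratic extension from diagonal binomials}, where $Y$-terms may a priori live in a quadratic extension of $\mathbb{F}$); the fix is immediate, though, since your quadratic has coefficients in $\mathbb{C}$ and therefore splits into linear factors over $\mathbb{C}$, so in any integral domain containing $Y_{\alpha\beta}^{ij}$ the term must equal one of the two constant roots, and the conclusion stands without that assumption.
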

\begin{proof}
The thesis follows from (\ref{eq: special rational, monomial})-(\ref{eq: special rational, binomial}) when $0\in \chi(\mathcal{I}\mid_{\alpha\beta}^{ij})$. Otherwise, 
by (\ref{eq: quartic from Grassmann-Plucker, 2}), $Y_{\alpha\beta}^{ij}$ is a root of a 
polynomial with coefficients in $\mathbb{C}[\mathbf{t},\mathbf{t}^{-1}]$. If $\chi(\mathcal{I}\mid_{\alpha\beta}^{ij})$ is separable, then this polynomial simplifies to one with constant coefficients and, hence, constant roots. Conversely, if $Y_{\alpha\beta}^{ij}\in\mathbb{C}$, in particular $Y_{\alpha\beta}^{ij}\notin\{0,-1\}$ by Assumption \ref{claim: no Y=-1}, then (\ref{eq: quartic from Grassmann-Plucker, 2}) expresses a linear dependence among the three monomials in $\chi(\mathcal{I}\mid_{\alpha\beta}^{ij})$, with coefficients in $\mathbb{C}\setminus\{0\}$. This condition 
can hold only if $\chi(\mathcal{I}\mid_{\alpha\beta}^{ij})$ is separable. 
\end{proof}
\begin{rem}
    \label{rem: algebraic key yields constant} 
If $\{i,j\}\times\{\alpha,\beta\}$ is a key generating a term $Y_{\alpha\beta}^{ij}\in\mathbb{F}$, then the discriminant of the quadratic polynomial (\ref{eq: quartic from Grassmann-Plucker, 2}) is a perfect square in $\mathbb{C}[\mathbf{t},\mathbf{t}^{-1}]$. By \citet[Lemma 4]{Angelelli2025}, it follows that $Y_{\alpha\beta}^{ij}\in\mathbb{C}$. 
\end{rem}

\subsection{Separable Sets from Non-Planar Keys}
\label{subsec: principal minors and hyperdeterminants}

\begin{lem}
\label{lem: constant 2-key under conditions}
Let $\mathcal{I}\in\mathfrak{G}(\mathbf{L})$ be a basis such that $h(\mathcal{I}_{\omega}^{s})=0$ for at least one pair $(s,\omega)\in\mathcal{I}\times\mathcal{I}^{\mathtt{C}}$, and $\mathfrak{c}:=\{i_{1},i_{2}\}\times\{\alpha_{1},\alpha_{2}\}\subseteq  \mathcal{I}\times\mathcal{I}^{\mathtt{C}}$ be a key. Then, $Y_{\gamma_{1}\gamma_{2}}^{i_{1}i_{2}},Y_{\alpha_{1}\alpha_{2}}^{l_{1}l_{2}}\in\mathbb{C}$ for any $\gamma_{1},\gamma_{2}\notin\mathcal{N}_{\mathfrak{c}_{r}}$ and  $l_{1},l_{2}\notin\mathcal{N}^{\mathfrak{c}_{c}}$. 
\end{lem}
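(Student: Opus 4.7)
My plan is to combine the rigid structure imposed by the key $\mathfrak{c}$ with the extra vanishing $h(\mathcal{I}_\omega^s)=0$ to activate the restricted configurations supplied by Lemma~\ref{lem: one configuration from 2-key}. As a preliminary reduction, the associativity identities (\ref{eq: associativity}) give
\[
Y_{\gamma_1\gamma_2}^{i_1 i_2} = -Y_{\gamma_1\alpha_1}^{i_1 i_2}\cdot Y_{\alpha_1\gamma_2}^{i_1 i_2}, \qquad
Y_{\alpha_1\alpha_2}^{l_1 l_2} = -Y_{\alpha_1\alpha_2}^{l_1 i_1}\cdot Y_{\alpha_1\alpha_2}^{i_1 l_2},
\]
so both claims reduce to showing that each ``elementary'' term $Y_{\alpha_p\gamma}^{i_1 i_2}$ with $\gamma\notin\mathcal{N}_{\mathfrak{c}_r}$ and $Y_{\alpha_1\alpha_2}^{i_p l}$ with $l\notin\mathcal{N}^{\mathfrak{c}_c}$ lies in $\mathbb{C}$, and in particular that the ``central'' term $\vartheta:=Y_{\alpha_1\alpha_2}^{i_1 i_2}$ is constant.

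The central step is to establish $\vartheta\in\mathbb{C}$. Since $\mathfrak{c}$ satisfies (\ref{eq: local context for representation of 4-square}) by definition of a key, I would invoke Lemma~\ref{lem: one configuration from 2-key}, first placing the hypothesis $h(\mathcal{I}_\omega^s)=0$ into its framework, i.e.\ realising it as a pair in $\mathcal{N}(\mathfrak{c})$ with non-zero $h$-value. When $(s,\omega)$ already lies in $\mathcal{N}(\mathfrak{c})$ this is immediate; otherwise, I would propagate the zero using the invariance of null-sets under single exchanges (Remark~\ref{rem: invariance null arrays}) along a chain of bases connecting $\mathcal{I}$ to a suitable $\mathcal{I}'$ via Lemma~\ref{lem: chain of non-vanishing}. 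Once the lemma applies, the even- and odd-type configurations (\ref{eq: binomial configuration from 5 Y terms, even-typed})-(\ref{eq: binomial configuration from 5 Y terms, odd-typed}) each express $\vartheta$ in terms of an auxiliary unit $\tau$ and an external constant; consistency across the four elements of $\Upsilon_\omega$, together with Assumption~\ref{claim: no Y=-1} (ruling out $Y=-1$), forces $\vartheta$ to be independent of $\mathbf{t}$.

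With $\vartheta\in\mathbb{C}$ in hand, I would propagate constancy to the elementary terms. For each $\gamma\notin\mathcal{N}_{\mathfrak{c}_r}$, at least one side corner $h(\mathcal{I}_\gamma^{i_q})$ is non-zero: if exactly one side corner vanishes, Lemma~\ref{lem: algebraic forms Y-terms} expresses $Y_{\alpha_p\gamma}^{i_1 i_2}$ as a monomial (or a shifted monomial that, combined with the quadrilateral decomposition (\ref{eq: quadrilateral decomposition}) and the now-constant $\vartheta$, matches the integrability condition of Lemma~\ref{lem: case B^(ij)_(ab) square}). If both side corners are non-zero, then $\chi(\mathcal{I}\mid_{\alpha_p\gamma}^{i_1 i_2})$ is itself a (weak) key to which the previous step applies. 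The dual argument, swapping rows and columns, handles $Y_{\alpha_1\alpha_2}^{i_p l}$.

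The main obstacle is the central step, specifically producing a zero properly aligned with $\mathcal{N}(\mathfrak{c})$ starting from an arbitrarily-placed zero $(s,\omega)\in\mathcal{I}\times\mathcal{I}^{\mathtt{C}}$. This requires a sub-case analysis according to whether $s\in\mathfrak{c}_r$, $\omega\in\mathfrak{c}_c$, or neither, with the last (most delicate) sub-case demanding a chain-of-exchanges argument that moves the zero into the null-set of the key without destroying the key property of $\mathfrak{c}$ itself.
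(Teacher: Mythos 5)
There is a genuine gap in your central step. Lemma~\ref{lem: one configuration from 2-key} does not conclude that $\vartheta:=Y_{\alpha_1\alpha_2}^{i_1i_2}$ is constant: its conclusion is only that the terms $Y_{\delta\omega}^{ms}$ fall into one of the two families (\ref{eq: binomial configuration from 5 Y terms, even-typed})--(\ref{eq: binomial configuration from 5 Y terms, odd-typed}), and in both of these $\vartheta$ is merely a unit of $\mathbb{C}(\mathbf{t})$, which may be a non-constant monomial (in the odd-type case $\vartheta=\varepsilon_{2}C\tau^{(1+\varepsilon_1-2\varepsilon_2)/2}$). Your claim that ``consistency across the four elements of $\Upsilon_\omega$, together with Assumption~\ref{claim: no Y=-1}, forces $\vartheta$ to be independent of $\mathbf{t}$'' is therefore unsubstantiated, and in fact false as a general implication: the explicit matrix (\ref{eq: counterexample odd-type}) realises an odd-type configuration with non-constant $Y$-terms while respecting Assumption~\ref{claim: no Y=-1}. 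In the paper these configurations are only turned into a contradiction in Proposition~\ref{prop: constant strip and principal minors}, and there the argument needs a \emph{non-planar} key (a third external index $\alpha_3$, a parity/double-counting argument over the keys $\mathfrak{c}_{(u,w)}$, and the identity (\ref{eq: diagonal minors 3-term expansion})) --- none of which is available under the hypotheses of the present lemma, where $\mathfrak{c}$ is just a key and the only extra datum is a single vanishing term $h(\mathcal{I}_\omega^s)=0$. There is also a mismatch in how you feed the hypothesis into Lemma~\ref{lem: one configuration from 2-key}: that lemma needs a pair $(m,\omega)\in\mathcal{N}(\mathfrak{c})$ with $h(\mathcal{I}_\omega^m)\neq 0$ together with a non-constant $Y_{\alpha\omega}^{mi}$, whereas the hypothesis gives you a \emph{zero}; ``realising it as a pair in $\mathcal{N}(\mathfrak{c})$ with non-zero $h$-value'' conflates these two roles, and moving the zero along a chain of exchanges does not produce the required non-vanishing pair.

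The mechanism the paper actually uses is different and does not go through Lemma~\ref{lem: one configuration from 2-key} at all: the zero is exploited to force \emph{rationality}. Either some $\chi(\mathcal{I}\mid_{\delta\alpha_w}^{i_1i_2})$ already contains a zero, in which case a single change of basis $\mathcal{J}=\mathcal{I}_{\alpha_2}^{i_2}$ makes the relevant $Y$-terms take the value $-1$ or the binomial form (\ref{eq: special rational, binomial}), so the quadrilateral decomposition (\ref{eq: quadrilateral decomposition}) places the target term in $\mathbb{F}$ and the perfect-square/discriminant argument in the proof of Lemma~\ref{lem: case B^(ij)_(ab) square} upgrades it to a constant; or else every quadruple $\{i_1,i_2\}\times\{\beta_1,\beta_2\}$ and $\{j_1,j_2\}\times\{\alpha_1,\gamma\}$ is a key, and then one routes (\ref{eq: quadrilateral decomposition}) through the hypothesised pair $(s,\omega)$ with $h(\mathcal{I}_\omega^s)=0$, so that all four factors $Y_{\beta\omega}^{js}$ have the form (\ref{eq: special rational, binomial}) and the same rationality-plus-discriminant argument closes the proof. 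To repair your proposal you would need to replace the configuration-lemma step with an argument of this type; as written, the central step does not follow from the results you invoke.
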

\begin{proof}
First, we examine the consequences of the existence of $\delta\in\mathcal{I}^{\mathtt{C}}$ and $w\in\{1,2\}$ such that $0\in\chi(\mathcal{I}\mid_{\delta\alpha_{w}}^{i_{1}i_{2}})$. Under this assumption, at least one pair in $\mathfrak{c}$, say $(i_{2},\alpha_{2})$, generates a basis $\mathcal{J}:=\mathcal{I}_{\alpha_{2}}^{i_{2}}$ satisfying $h(\mathcal{J}_{\delta}^{j})=0$ for some 
$j\in\{i_{1},\alpha_{2}\}$. The absence of null columns (Section \ref{par: non-trivial dependence pattern}) ensures the existence of $m\in\mathcal{J}$ such that $h(\mathcal{J}_{\delta}^{m})\neq0$. Furthermore, when $\delta\notin\mathcal{N}_{(\mathfrak{c}_{\mathcal{J}})_{r}}$, we can, by definition, select such an index $m$ in $(\mathfrak{c}_{\mathcal{J}})_{r}=\{i_{1},\alpha_{2}\}$. This configuration yields $Y(\mathcal{J})_{\beta\delta}^{mj}\in\mathbb{F}$ for all $j\in\{i_{1},\alpha_{2}\}$ and $\beta\in\{\alpha_{1},i_{2}\}$, 
as all such $Y$-terms are either equal to $-1$ (when 
$m=j$) or are derived from (\ref{eq: special rational, binomial}). Applying (\ref{eq: quadrilateral decomposition}), we obtain $Y(\mathcal{J})_{\alpha_{1}i_{2}}^{i_{1}\alpha_{2}}\in\mathbb{F}$, and from Remark \ref{rem: algebraic key yields constant}, we find that $Y(\mathcal{J})_{\alpha_{1}i_{2}}^{i_{1}\alpha_{2}}\in\mathbb{C}$. By transposition, i.e., exchanging the roles of indices in $\mathcal{I}$ and $\mathcal{I}^{\mathtt{C}}$, the same holds if there exists $g\in\mathcal{I}$ such that $0\in\chi(\mathcal{I}|_{\alpha_{1}\alpha_{2}}^{i_{w}g})$. 

Now, to prove the thesis, it suffices to verify that  $Y_{\alpha_{1}\gamma}^{i_{1}i_{2}}\in\mathbb{C}$ for all $\gamma\notin\mathcal{N}_{\mathfrak{c}_{r}}$, as this implies $Y_{\gamma_{1}\gamma_{2}}^{i_{1}i_{2}}=-Y_{\gamma_{1}\alpha_{1}}^{i_{1}i_{2}}\cdot Y_{\alpha_{1}\gamma_{2}}^{i_{1}i_{2}}\in\mathbb{C}$. An analogous argument yields $Y_{\alpha_{1}\alpha_{2}}^{l_{1}l_{2}}\in\mathbb{C}$, $l_{1},l_{2}\notin\mathcal{N}^{\mathfrak{c}_{c}}$. 
Suppose that $0\in\chi(\mathcal{I}|_{\gamma\alpha_{w}}^{i_{1}i_{2}})$ for some $w\in\{1,2\}$ and set $\delta:=\gamma$ 
in the previous argument. The compatibility of the condition $Y(\mathcal{J})_{\alpha_{1}i_{2}}^{i_{1}\alpha_{2}}\in\mathbb{C}\setminus\{-1\}$ (from Assumption \ref{claim: no Y=-1}) with the expression derived from (\ref{eq: special rational, binomial}) for both $Y(\mathcal{J})_{\alpha_{1}\gamma}^{i_{1}\alpha_{2}}$ and $Y(\mathcal{J})_{i_{2}\gamma}^{i_{1}\alpha_{2}}$ implies that $Y(\mathcal{J})_{\beta_{1}\beta_{2}}^{i_{1}\alpha_{2}}\in\mathbb{C}$ for all $\beta_{1},\beta_{2}\in\{\alpha_{1},i_{2},\gamma\}$. Returning to $\mathcal{I}$ using (\ref{eq: Grassmann-Plucker translation exchange, vertical})-(\ref{eq: Grassmann-Plucker translation exchange, diagonal}), we have $Y_{\alpha_{1}\alpha_{2}}^{i_{1}i_{2}},Y_{\alpha_{2}\gamma}^{i_{1}i_{2}}\in\mathbb{C}$, and by (\ref{eq: associativity}), we obtain $Y_{\alpha_{1}\gamma}^{i_{1}i_{2}}\in\mathbb{C}$. Thus, we may have $Y_{\alpha_{1}\gamma}^{i_{1}i_{2}}\notin\mathbb{C}$ only if $\{i_{1},i_{2}\}\times\{\alpha_{1},\gamma\}$ is a key. Applying the previous reasoning to this key, 
$Y_{\alpha_{1}\gamma}^{i_{1}i_{2}}\notin\mathbb{C}$ would imply that we cannot find indices $\delta$ or $g$ as above, deducing that $\{j_{1},j_{2}\}\times\{\alpha_{1},\gamma\}$ and $\{i_{1},i_{2}\}\times\{\beta_{1},\beta_{2}\}$ are also keys for all $j_{1},j_{2}\in\mathcal{I}$ and $\beta_{1},\beta_{2}\in\mathcal{I}^{\mathtt{C}}$. From $h(\mathcal{I}_{\omega}^{s})=0$, we deduce that $Y_{\alpha_{1}\omega}^{i_{1}s}$, $Y_{\gamma\omega}^{i_{1}s}$, $Y_{\alpha_{1}\omega}^{i_{2}s}$, and $Y_{\gamma\omega}^{i_{2}s}$ 
take the form (\ref{eq: special rational, binomial}),  
which, by (\ref{eq: quadrilateral decomposition}), implies that $Y_{\alpha_{1}\gamma}^{i_{1}i_{2}}\in\mathbb{F}$. By Remark \ref{rem: algebraic key yields constant}, we conclude that $Y_{\alpha_{1}\gamma}^{i_{1}i_{2}}\in\mathbb{C}$. 
\end{proof}

\begin{prop}
\label{prop: constant 2-key}
We have $Y_{\gamma_{1}\gamma_{2}}^{i_{1}i_{2}},Y_{\alpha_{1}\alpha_{2}}^{l_{1}l_{2}}\in\mathbb{C}$ for any key $\mathfrak{c}:=\{i_{1},i_{2}\}\times\{\alpha_{1},\alpha_{2}\}$, with $\gamma_{1},\gamma_{2}\notin\mathcal{N}_{\mathfrak{c}_{r}}$ and $l_{1},l_{2}\notin\mathcal{N}^{\mathfrak{c}_{c}}$.
\end{prop}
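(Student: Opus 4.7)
The plan is to reduce Proposition \ref{prop: constant 2-key} to Lemma \ref{lem: constant 2-key under conditions} via a change-of-basis argument. If $\mathcal{I}$ already admits a pair $(s,\omega)\in\mathcal{I}\times\mathcal{I}^{\mathtt{C}}$ with $h(\mathcal{I}_{\omega}^{s})=0$, the hypothesis of Lemma \ref{lem: constant 2-key under conditions} is satisfied and the conclusion is immediate. So I restrict attention to the complementary case in which $\mathcal{I}_{\omega}^{s}\in\mathfrak{G}(\mathbf{L})$ for every $(s,\omega)\in\mathcal{I}\times\mathcal{I}^{\mathtt{C}}$, i.e. $\mathcal{I}$ is an ``interior'' basis all of whose single-index neighbours remain bases.

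Under the standing assumption $\mathfrak{G}(\mathbf{L})\neq\wp_{k}[n]$ (the case of equality being handled in \cite{Angelelli2021}), there exists some $\mathcal{K}\in\wp_{k}[n]\setminus\mathfrak{G}(\mathbf{L})$. Iterating the matroid exchange property (\ref{eq: exchange relation}) toward $\mathcal{K}$ while staying inside $\mathfrak{G}(\mathbf{L})$, I produce a basis $\mathcal{J}\in\mathfrak{G}(\mathbf{L})$ at Hamming distance $2$ from a non-basis, i.e. with $h(\mathcal{J}_{\omega'}^{s'})=0$ for some $(s',\omega')\in\mathcal{J}\times\mathcal{J}^{\mathtt{C}}$; then Lemma \ref{lem: constant 2-key under conditions} applies on $\mathcal{J}$. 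By Lemma \ref{lem: chain of non-vanishing} I further obtain a chain $\mathcal{I}=\mathcal{L}_{0},\mathcal{L}_{1},\dots,\mathcal{L}_{r}=\mathcal{J}$ in $\mathfrak{G}(\mathbf{L})$ with $\#(\mathcal{L}_{u-1}\Delta\mathcal{L}_{u})=2$ for every $u\in[r]$. Along this chain the key $\mathfrak{c}$ is relabelled step by step according to (\ref{eq: weak key under local exchange}), and its null sets track correctly through Remark \ref{rem: invariance null arrays}, so that the hypothesis ``$\mathfrak{c}$ is a key, $\gamma_{1},\gamma_{2}\notin\mathcal{N}_{\mathfrak{c}_{r}}$'' translates consistently between consecutive bases.

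The conclusion is then propagated backward from $\mathcal{J}$ to $\mathcal{I}$ via the transformation identities (\ref{eq: Grassmann-Plucker translation exchange, vertical})-(\ref{eq: Grassmann-Plucker translation exchange, diagonal}), combined with (\ref{eq: associativity}) and (\ref{eq: quadrilateral decomposition}) whenever the single-exchange step touches an index in $\mathfrak{c}_{r}\cup\mathfrak{c}_{c}$. The crucial point is that both basis-change maps $y\mapsto -y-1$ and $y\mapsto -(1+y^{-1})^{-1}$ leave $\mathbb{C}\setminus\{-1,0\}$ invariant, and Assumption \ref{claim: no Y=-1} excludes the forbidden value $-1$, so constancy of the relevant $Y$-term is preserved at each step. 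The dual claim $Y_{\alpha_{1}\alpha_{2}}^{l_{1}l_{2}}\in\mathbb{C}$ follows by the symmetric row-column argument grounded in the adjunction of Definition \ref{def: adjoint pair of null-sets}. I expect the main obstacle to be the careful bookkeeping when some chain step acts on an index of $\mathfrak{c}_{r}\cup\mathfrak{c}_{c}$: one has to verify that the intermediate $\mathfrak{c}_{\mathcal{L}_{u}}$ remains at least a weak key satisfying (\ref{eq: local context for representation of 4-square}), so that the $Y$-terms employed in the transfer are well defined and the composition of basis-change maps stays within the admissible domain dictated by Assumption \ref{claim: no Y=-1}.
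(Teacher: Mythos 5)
Your overall strategy (reduce to Lemma \ref{lem: constant 2-key under conditions} through a chain supplied by Lemma \ref{lem: chain of non-vanishing}) points in the same direction as the paper, but the transport step contains a genuine gap, in two places. First, you assert that the key $\mathfrak{c}$ and its null sets ``track correctly'' along the chain via (\ref{eq: weak key under local exchange}) and Remark \ref{rem: invariance null arrays}. Those tools only cover basis changes $\mathcal{I}\mapsto\mathcal{I}_{\gamma}^{l}$ with $(l,\gamma)\in\mathfrak{c}$; a generic step of the chain produced by Lemma \ref{lem: chain of non-vanishing} exchanges a pair of indices having nothing to do with $\mathfrak{c}$, and then every minor entering the definition of ``key'' changes, so nothing guarantees that $\{i_{1},i_{2}\}\times\{\alpha_{1},\alpha_{2}\}$ is still a key (or even observable) for the next basis, let alone for your final $\mathcal{J}$. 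Second, and more seriously, the backward propagation of constancy does not follow from the maps $y\mapsto -y-1$ and $y\mapsto -(1+y^{-1})^{-1}$: the identities (\ref{eq: Grassmann-Plucker translation exchange, vertical})--(\ref{eq: Grassmann-Plucker translation exchange, diagonal}) relate $Y$-terms only when the exchanged pair lies among the four indices of the $Y$-term. For a chain step disjoint from $\{i_{1},i_{2},\gamma_{1},\gamma_{2}\}$, the terms $Y(\mathcal{L}_{u-1})_{\gamma_{1}\gamma_{2}}^{i_{1}i_{2}}$ and $Y(\mathcal{L}_{u})_{\gamma_{1}\gamma_{2}}^{i_{1}i_{2}}$ are built from entirely different minors and differ by a ratio of the type (\ref{eq: g-factor}), which is not constant a priori; controlling exactly these ratios is the substance of the proposition. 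Your closing worry about steps that touch $\mathfrak{c}_{r}\cup\mathfrak{c}_{c}$ is aimed at the easy case (those are the ones (\ref{eq: weak key under local exchange}) handles); the untreated case of steps disjoint from $\mathfrak{c}$ is where the argument breaks.

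The paper circumvents both issues by a different mechanism: it picks a basis with a vanishing single-exchange neighbour at \emph{minimal} symmetric-difference distance $r$ from $\mathcal{I}$, so that every intermediate basis $\mathcal{L}^{(u)}$, $u<r$, has no vanishing single-exchange neighbour. This non-degeneracy is used to build fresh keys anchored at the exchanged pair $(l_{r},\gamma_{r})$ of the last chain step (not the transported $\mathfrak{c}$), to which Lemma \ref{lem: constant 2-key under conditions} is applied, yielding $Y(\mathcal{L}^{(r-1)})_{\gamma_{r}\omega}^{l_{r}m}\in\mathbb{C}$ for arbitrary $(m,\omega)$; the quadrilateral decomposition (\ref{eq: quadrilateral decomposition}), with all factors anchored at those exchange indices, is then what allows the descent along the chain, showing each $\mathcal{L}^{(u)}$ is integrable and in particular $\mathcal{I}$, which is stronger than the stated thesis. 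Without this (or an equivalent device), your propagation from $\mathcal{J}$ back to $\mathcal{I}$ does not go through.
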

\begin{proof}
The thesis follows from \citet[Thm. 1]{Angelelli2025} when $\mathfrak{G}(\mathbf{L})=\wp_{k}[n]$. Thus, we focus on cases where $\mathfrak{G}(\mathbf{L})$ is a proper subset of $\wp_{k}[n]$ and consider bases $\mathcal{J}$ that contain at least one pair $(s,\sigma)\in\mathcal{J}\times\mathcal{J}^{\mathtt{C}}$ such that $h(\mathcal{J}_{\sigma}^{s})=0$. Adopting the same notation as in Lemma \ref{lem: chain of non-zero}, we choose such a basis $\mathcal{L}^{(r)}$ with minimal distance (in terms of symmetric difference) $r:=|\mathcal{I}\Delta\mathcal{L}^{(r)}|$ from $\mathcal{I}=:\mathcal{L}^{(0)}$; note that $r+1$ is the minimal distance between $\mathcal{I}$ and any set in $\wp_{k}[n]\setminus\mathfrak{G}(\mathbf{L})$. The case $r=0$ is established by Lemma \ref{lem: constant 2-key under conditions}, so we assume $r>0$. Let $\mathcal{L}^{(u)}\setminus\mathcal{L}^{(u-1)}=:\{\gamma_{u}\}$ and $\mathcal{L}^{(u-1)}\setminus\mathcal{L}^{(u)}=:\{l_{u}\}$. 

Observe that $h((\mathcal{L}^{(r-1)})_{\gamma_{r}\omega}^{l_{r}m})=0$ for all $m,\omega$ would imply that columns $\gamma_{r}$ and $\omega$ of $\mathbf{L}(\mathbf{t})$ are proportional for all $\omega\notin\mathcal{L}^{(r-1)}$. It would follow that each basis must have the form $\mathcal{L}^{(r-1)}$ or $(\mathcal{L}^{(r-1)})_{\alpha}^{i}$ for some $i\in\mathcal{L}^{(r-1)}$ and $\alpha \notin \mathcal{L}^{(r-1)}$, implying that there are at most $k\cdot(n-k)+1$ bases. However, this contradicts the existence of a key in $\mathcal{I}$, since $h(\mathcal{I}_{\alpha}^{i})\neq0$ for all $i\in\mathcal{I}$ and $\alpha\in\mathcal{I}^{\mathtt{C}}$ by construction (given that $r>0$), and we also have $h(\mathcal{I})\neq 0 $ and $h(\mathcal{I}_{\alpha_{1}\alpha_{2}}^{i_{1}i_{2}})\neq0$, leading to at least $k\cdot(n-k)+2$ bases. Thus, we find a key $\{l_{r},\overline{m}\}\times\{\gamma_{r},\overline{\omega}\}$ for some $\overline{m}\in\mathcal{L}^{(r-1)}$ and $\overline{\omega}\notin\mathcal{L}^{(r-1)}$, which generates a key $\{\gamma_{r},\overline{m}\}\times\{l_{r},\overline{\omega}\}$ in $\mathcal{L}^{(r)}$. Furthermore, each 
$Y$-term in the basis $\mathcal{L}^{(r-1)}$ is observable by construction, and the terms $Y(\mathcal{L}^{(r)})_{l_{r}\overline{\omega}}^{\gamma_{r}m}
\in\mathbb{C}$ remain observable when moving from $\mathcal{L}^{(r-1)}$ to $(\mathcal{L}^{(r-1)})_{\gamma_{r}}^{l_{r}}=\mathcal{L}^{(r)}$. By Lemma \ref{lem: constant 2-key under conditions}, we obtain $Y(\mathcal{L}^{(r)})_{l_{r}\overline{\omega}}^{\gamma_{r}m}
\in\mathbb{C}$ for all $m$, and by (\ref{eq: Grassmann-Plucker translation exchange, vertical}), it follows that $Y(\mathcal{L}^{(r-1)})_{\gamma_{r}\overline{\omega}}^{l_{r}m}
\in\mathbb{C}$. Analogously, $Y(\mathcal{L}^{(r-1)})_{\gamma_{r}\omega}^{l_{r}\overline{m}}
\in\mathbb{C}$ for all $\omega$. The remaining pairs $(m,\omega)$ to be analysed, where 
the hypotheses of Lemma \ref{lem: constant 2-key under conditions} do not apply, are those for which neither $\{l_{r},\overline{m}\}\times\{\gamma_{r},\omega\}$ nor $\{l_{r},m\}\times\{\gamma_{r},\overline{\omega}\}$ are keys in $\mathcal{L}^{(r-1)}$. This implies that the 
columns $\omega$ and $\gamma_{r}$ of $\mathbf{L}(\mathbf{t})$ are proportional, as are the rows $m$ and $l_{r}$ restricted to $(\mathcal{L}^{(r-1)})^{\mathtt{C}}$. For these pairs, we obtain four keys in $\mathcal{L}^{(r-1)}$ of the form $\{j,\overline{m}\}\times\{\beta,\overline{\omega}\}$ for all $j\in\{l_{r},m\}$ and $\beta\in\{\gamma_{r},\omega\}$, as they arise from substituting a column or row from the key $\{l_{r},\overline{m}\}\times\{\gamma_{r},\overline{m}\}$ with a proportional one. Applying Lemma \ref{lem: constant 2-key under conditions} to these keys, 
we obtain $Y(\mathcal{L}^{(r-1)})_{\beta\overline{\omega}}^{j\overline{m}}\in\mathbb{C}$, and from (\ref{eq: quadrilateral decomposition}), it follows that $Y(\mathcal{L}^{(r-1)})_{\gamma_{r}\omega}^{l_{r}m}\in\mathbb{C}$. 

Since the pair $(m,\omega)$ is arbitrary, we can specify the previous argument to both $m=\gamma_{r-1}$ and any $m\neq\gamma_{r-1}$, as well as $\omega=l_{r-1}$ and any $\omega\neq l_{r-1}$. Applying once more (\ref{eq: Grassmann-Plucker translation exchange, vertical}) for these choices, we obtain $Y(\mathcal{L}^{(r-1)})_{\omega l_{r-1}}^{m\gamma_{r-1}}\in\mathbb{C}$, and by (\ref{eq: Grassmann-Plucker translation exchange, vertical}), $Y(\mathcal{L}^{(r-2)})_{\omega\gamma_{r-1}}^{ml_{r-1}}\in\mathbb{C}$. Iterating this last step, we conclude that $Y(\mathcal{L}^{(u-1)})_{\alpha\beta}^{ij}\in\mathbb{C}$ for all $u\in[r]$, $i,j\in\mathcal{L}^{(u-1)}$, and $\alpha,\beta\notin\mathcal{L}^{(u-1)}$. In particular, this holds for $\mathcal{L}^{(0)}=\mathcal{I}$, which completes the proof. 
\end{proof}
\begin{lem}
\label{lem: quadratic extension from diagonal binomials} For  $Y(\mathcal{I})_{\delta_{3}\delta_{1}}^{a_{1}a_{3}},Y(\mathcal{I})_{\delta_{2}\delta_{1}}^{a_{1}a_{2}},Y(\mathcal{I})_{\delta_{3}\delta_{2}}^{a_{2}a_{3}},Y(\mathcal{I}_{\delta_{1}}^{a_{1}})_{\delta_{3}\delta_{2}}^{a_{2}a_{3}},\in\mathbb{F}$, the term $Y(\mathcal{I})_{\delta_{3}\delta_{1}}^{a_{1}a_{2}}$ is a root of a polynomial with coefficients in $\mathbb{F}$ and degree at most $2$. 
\end{lem}
\begin{proof}
Let us introduce 
 \begin{equation}
\varepsilon_{\delta_{1}\delta_{2}\delta_{3}}^{a_{1}a_{2}a_{3}} := -\mathrm{sign}\left[\prod_{u<w}(a_{u}-a_{w})\cdot\prod_{x<z}(\delta_{x}-\delta_{z})\cdot\prod_{r\neq s}(a_{r}-\delta_{s})\right],  
\label{eq: three-indices sign} 
\end{equation} 
\begin{equation}
\mathfrak{m}_{\delta_{1}\delta_{2}\delta_{3}}^{a_{1}a_{2}a_{3}} := \varepsilon_{\delta_{1}\delta_{2}\delta_{3}}^{a_{1}a_{2}a_{3}}\cdot\frac{\Delta_{\mathbf{R}(\mathbf{t})}(\mathcal{I})^{2}\cdot\Delta_{\mathbf{R}(\mathbf{t})}(\mathcal{I}_{\delta_{1}\delta_{2}\delta_{3}}^{a_{1}a_{2}a_{3}})}{\Delta_{\mathbf{R}(\mathbf{t})}(\mathcal{I}_{\delta_{1}}^{a_{1}})\cdot\Delta_{\mathbf{R}(\mathbf{t})}(\mathcal{I}_{\delta_{2}}^{a_{2}})\cdot\Delta_{\mathbf{R}(\mathbf{t})}(\mathcal{I}_{\delta_{3}}^{a_{3}})}. 
\label{eq: three Y-term}
 \end{equation}
Recalling (\ref{eq: cross-section}), we directly verify that the identity 
\begin{equation} 
1+Y_{\delta_{2}\delta_{1}}^{a_{1}a_{2}}+Y_{\delta_{3}\delta_{1}}^{a_{1}a_{3}}+Y_{\delta_{3}\delta_{2}}^{a_{2}a_{3}}+Y_{\delta_{3}\delta_{2}}^{a_{2}a_{3}}\cdot Y_{\delta_{3}\delta_{1}}^{a_{1}a_{2}}-(Y_{\delta_{3}\delta_{1}}^{a_{1}a_{2}})^{-1}\cdot Y_{\delta_{3}\delta_{1}}^{a_{1}a_{3}}\cdot Y_{\delta_{2}\delta_{1}}^{a_{1}a_{2}}=\mathfrak{m}_{\delta_{1}\delta_{2}\delta_{3}}^{a_{1}a_{2}a_{3}} 
\label{eq: diagonal minors 3-term expansion}
 \end{equation} 
holds. Consequently, $Y_{\delta_{3}\delta_{2}}^{a_{2}a_{3}}\cdot Y_{\delta_{3}\delta_{1}}^{a_{1}a_{2}}$ is a root of the quadratic polynomial   
 \begin{equation}
P_{\delta_{1}\delta_{2}\delta_{3}}^{a_{1}a_{2}a_{3}}(X):=X^{2}+\left(1+Y_{\delta_{2}\delta_{1}}^{a_{1}a_{2}}+Y_{\delta_{3}\delta_{1}}^{a_{1}a_{3}}+Y_{\delta_{3}\delta_{2}}^{a_{2}a_{3}}-\mathfrak{m}_{\delta_{1}\delta_{2}\delta_{3}}^{a_{1}a_{2}a_{3}}\right)\cdot X-Y_{\delta_{3}\delta_{1}}^{a_{1}a_{3}}\cdot Y_{\delta_{2}\delta_{1}}^{a_{1}a_{2}}\cdot Y_{\delta_{3}\delta_{2}}^{a_{2}a_{3}}. 
\label{eq: diagonal minors, quadratic polynomial}
 \end{equation}

Now, let $c_{1}^{(s,t)}$ denote the sign $c_{1}$ in (\ref{eq: signs for permutation}) for  $(i,j,\alpha,\beta):=(a_{s},a_{t},\delta_{s},\delta_{t})$. From (\ref{eq: three-indices sign}), we note that
 \begin{equation} 
 \varepsilon_{\delta_{1}\delta_{2}\delta_{3}}^{a_{1}a_{2}a_{3}} = c_{1}^{(1,2)}\cdot c_{1}^{(1,3)}\cdot c_{1}^{(2,3)}. 
\label{eq: three-index sign from three two-index signs}
 \end{equation} 
Additionally, from the three-term Grassmann-Pl\"{u}cker relations (\ref{eq: three-term Grassmann-Plucker relations}), we derive 
 \begin{equation}
    Y_{\delta_{t}\delta_{s}}^{a_{s}a_{t}}+1=c_{1}^{(s,t)}\frac{\Delta_{\mathbf{R}(\mathbf{t})}(\mathcal{I})\cdot\Delta_{\mathbf{R}(\mathbf{t})}(\mathcal{I}_{\delta_{s}\delta_{t}}^{a_{s}a_{t}})}{\Delta_{\mathbf{R}(\mathbf{t})}(\mathcal{I}_{\delta_{s}}^{a_{s}})\cdot\Delta_{\mathbf{R}(\mathbf{t})}(\mathcal{I}_{\delta_{t}}^{a_{t}})}.
\label{eq: Y+1 sign ratio}
 \end{equation}
Combining (\ref{eq: three Y-term}), (\ref{eq: three-index sign from three two-index signs}), and (\ref{eq: Y+1 sign ratio}), we obtain the identity
 \begin{equation}
    \mathfrak{m}_{\delta_{1}\delta_{2}\delta_{3}}^{a_{1}a_{2}a_{3}}=\left(Y(\mathcal{I}_{\delta_{1}}^{a_{1}})_{\delta_{3}\delta_{2}}^{a_{2}a_{3}}+1\right)\cdot\left(Y(\mathcal{I})_{\delta_{2}\delta_{1}}^{a_{1}a_{2}}+1\right)\cdot\left(Y(\mathcal{I})_{\delta_{3}\delta_{1}}^{a_{1}a_{3}}+1\right).
\label{eq: g-factor for change of basis}
 \end{equation} 
which implies, under the stated hypothesis, that all coefficients in (\ref{eq: diagonal minors, quadratic polynomial}) belong to $\mathbb{F}$. 
\end{proof}
\begin{rem}
\label{rem: hyperdeterminant}  
It is observed that the term $Y_{\delta_{3}\delta_{2}}^{a_{2}a_{3}}\cdot Y_{\delta_{2}\delta_{1}}^{a_{1}a_{3}}$ also satisfies the quadratic equation (\ref{eq: diagonal minors, quadratic polynomial}). More precisely, denoting the roots of this equation as $Y_{\delta_{3}\delta_{2}}^{a_{2}a_{3}}\cdot X_{+}$ and $Y_{\delta_{3}\delta_{2}}^{a_{2}a_{3}}\cdot X_{-}$, where $X_{+}=Y_{\delta_{3}\delta_{1}}^{a_{1}a_{2}}$ by Lemma \ref{lem: quadratic extension from diagonal binomials}, the relation between roots and coefficients in a polynomial yields 
\begin{equation} 
Y_{\delta_{3}\delta_{2}}^{a_{2}a_{3}}\cdot X_{+} \cdot Y_{\delta_{3}\delta_{2}}^{a_{2}a_{3}}\cdot X_{-} = -Y_{\delta_{3}\delta_{1}}^{a_{1}a_{3}}\cdot Y_{\delta_{2}\delta_{1}}^{a_{1}a_{2}}\cdot Y_{\delta_{3}\delta_{2}}^{a_{2}a_{3}} 
\quad 
\Rightarrow 
\quad 
X_{-}=-Y_{\delta_{3}\delta_{1}}^{a_{2}a_{1}}\cdot Y_{\delta_{3}\delta_{1}}^{a_{1}a_{3}}\cdot Y_{\delta_{2}\delta_{1}}^{a_{1}a_{2}}\cdot Y_{\delta_{2}\delta_{3}}^{a_{2}a_{3}} 
= Y_{\delta_{2}\delta_{1}}^{a_{1}a_{3}}. 
\label{eq: conjugate root}
\end{equation}
This result is consistent with the expression of the polynomial (\ref{eq: diagonal minors, quadratic polynomial}), as its coefficients are invariant under permutations of the labels $i\in[3]$ of $(a_{i},\delta_{i})$. This includes the term (\ref{eq: g-factor for change of basis}), and hence the factor 
 \begin{equation}
g_{\delta_{1}\delta_{2}\delta_{3}}^{a_{1}a_{2}a_{3}}:=\frac{Y(\mathcal{I}_{\delta_{1}}^{a_{1}})_{\delta_{2}\delta_{3}}^{a_{3}a_{2}}+1}{Y(\mathcal{I})_{\delta_{2}\delta_{3}}^{a_{3}a_{2}}+1}=\frac{\mathfrak{m}_{\delta_{1}\delta_{2}\delta_{3}}^{a_{1}a_{2}a_{3}}}{\left(Y(\mathcal{I})_{\delta_{3}\delta_{2}}^{a_{2}a_{3}}+1\right)\cdot\left(Y(\mathcal{I})_{\delta_{2}\delta_{1}}^{a_{1}a_{2}}+1\right)\cdot\left(Y(\mathcal{I})_{\delta_{3}\delta_{1}}^{a_{1}a_{3}}+1\right)}
\label{eq: g-factor}
 \end{equation}
is invariant under label permutations as well. 

In conclusion, we also note that the discriminant of (\ref{eq: diagonal minors, quadratic polynomial}) is given by 
 \begin{equation}
\Delta_{\delta_{1}\delta_{2}\delta_{3}}^{a_{1}a_{2}a_{3}}:=\left(1+Y_{\delta_{2}\delta_{1}}^{a_{1}a_{2}}+Y_{\delta_{3}\delta_{1}}^{a_{1}a_{3}}+Y_{\delta_{3}\delta_{2}}^{a_{2}a_{3}}-\mathfrak{m}_{\delta_{1}\delta_{2}\delta_{3}}^{a_{1}a_{2}a_{3}}\right)^{2}+4\cdot Y_{\delta_{3}\delta_{1}}^{a_{1}a_{3}}\cdot Y_{\delta_{2}\delta_{1}}^{a_{1}a_{2}}\cdot Y_{\delta_{3}\delta_{2}}^{a_{2}a_{3}}
\label{eq: discriminant}
 \end{equation}
which, when expressed in terms of the quantities $Y_{\delta_{w}\delta_{u}}^{a_{u}a_{w}}+1$ for $1\leq u<w\leq 3$, is a non-homogeneous analogue of the $2\times2\times2$ hyperdeterminant \citep{GKZ1994,Shadbakht2008}. 
\end{rem}

\begin{prop}
\label{prop: constant strip and principal minors}
Let $\mathfrak{c}:=\{i_{1},i_{2}\}\times\{\alpha_{1},\alpha_{2}\}$ be a non-planar key. Then, for each $(m,\omega)$ such that $\chi(\mathcal{I}\mid_{\alpha\omega}^{im})$ is observable for some $(i,\alpha)\in\mathfrak{c}$, we have
$Y_{\alpha_{w}\omega}^{mi_{u}}\in\mathbb{C}$ for all $u,w\in\{1,2\}$.
\end{prop}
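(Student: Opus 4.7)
The plan is by contradiction: suppose some $Y^{m i_u}_{\alpha_w \omega}$ is not a unit in $\mathbb{C}$, and derive an incompatibility with the non-planarity of $\mathfrak{c}$. First, I would apply Proposition \ref{prop: constant 2-key} directly to $\mathfrak{c}$: the key condition places $\alpha_{1},\alpha_{2}$ outside $\mathcal{N}_{\mathfrak{c}_{r}}$, which immediately yields $Y^{i_{1} i_{2}}_{\alpha_{1}\alpha_{2}} \in \mathbb{C}$ and constancy of every $Y^{i_{1} i_{2}}_{\gamma_{1}\gamma_{2}}$ and $Y^{l_{1} l_{2}}_{\alpha_{1}\alpha_{2}}$ whose external indices avoid $\mathcal{N}(\mathfrak{c})$. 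The residual task is thus the ``cross'' strip, where both $m\notin\mathfrak{c}_{r}$ and $\omega\notin\mathfrak{c}_{c}$.

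Next, I would translate the observability of $\chi(\mathcal{I}\mid_{\alpha\omega}^{im})$ into a 4-square setup for Lemma \ref{lem: one configuration from 2-key}. Since $h(\mathcal{I})\neq 0$ and each $h(\mathcal{I}_{\alpha_{w}}^{i_{u}})\neq 0$, observability singles out at least one of $h(\mathcal{I}_{\omega}^{m})$, $h(\mathcal{I}_{\alpha\omega}^{im})$, or $h(\mathcal{I}_{\omega}^{i})\cdot h(\mathcal{I}_{\alpha}^{m})$ as nonzero. When $(m,\omega)\notin\mathcal{N}(\mathfrak{c})$ (the ``rich'' case), Proposition \ref{prop: constant 2-key} combined with associativity (\ref{eq: associativity}) through an intermediate index in $\mathfrak{c}_{r}\cup\mathfrak{c}_{c}$ already concludes. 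Otherwise, a local exchange (\ref{eq: weak key under local exchange}) preserving $\mathcal{N}(\mathfrak{c})$ (Remark \ref{rem: invariance null arrays}) reduces me to $(m,\omega)\in\mathcal{N}(\mathfrak{c})$ with $h(\mathcal{I}_{\omega}^{m})\neq 0$; under the contradiction assumption, Lemma \ref{lem: one configuration from 2-key} then restricts $\Upsilon_{\omega}$ to one of the two explicit templates (\ref{eq: binomial configuration from 5 Y terms, even-typed}) or (\ref{eq: binomial configuration from 5 Y terms, odd-typed}).

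The crucial use of non-planarity is to rule out both templates. By Definition \ref{def: explained key} and the adjunction (\ref{eq: row-column duality for full arrays}), non-planarity yields either a row witness $m^{*}\in\mathcal{I}\setminus\mathfrak{c}_{r}$ with $h(\mathcal{I}_{\alpha_{q}}^{m^{*}})\neq 0$ for some $q\in[2]$, or dually a column witness $\omega^{*}$; row-column symmetry reduces to the first case. Combined with a pair from $\mathfrak{c}$, the witness produces an auxiliary key $\{m^{*}, i_{v}\}\times\{\alpha_{1},\alpha_{2}\}$ to which Proposition \ref{prop: constant 2-key} again applies, securing constancy of the associated $Y$-terms. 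Plugging the template into the quadrilateral decomposition (\ref{eq: quadrilateral decomposition}) for $Y^{m i_{u}}_{\alpha_{w}\omega}$ with $m^{*}$ as the pivot produces an equality between units in $\mathbb{C}(\mathbf{t})$ whose non-pivot factors are already in $\mathbb{C}$; the template parameter $\tau$ is then forced to be constant.

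The hardest step will be closing out the odd-type template (\ref{eq: binomial configuration from 5 Y terms, odd-typed}), whose factor $\tau^{2\varepsilon_{2}}$ together with the cardinality constraint $\#(\Upsilon_{\omega}\cap\mathbb{C})=3$ creates an asymmetry absent from the even-type case. I anticipate invoking the permutation symmetry of the $g$-factor (\ref{eq: g-factor}) on the triple of pairs involving $m^{*}$, together with the hyperdeterminant identity (\ref{eq: discriminant}), to pin down the exponent of $\tau$ and collapse the template to a constant. Selecting the right index permutation so that the ``new'' constant $Y$-terms match up cleanly with the template's factorisation is where I expect the bulk of the technical work to lie.
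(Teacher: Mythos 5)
Your overall architecture (reduce to the cross strip, invoke Lemma \ref{lem: one configuration from 2-key}, use non-planarity to kill the templates) points in the right direction, but two steps do not hold up. First, the ``rich'' case $(m,\omega)\notin\mathcal{N}(\mathfrak{c})$ cannot be closed by Proposition \ref{prop: constant 2-key} plus associativity alone: every decomposition of $Y^{mi_u}_{\alpha_w\omega}$ via (\ref{eq: associativity}) or (\ref{eq: quadrilateral decomposition}) retains at least one factor in which the external row $m$ and the external column $\omega$ appear together, i.e.\ another cross term of exactly the type you are trying to control, so the argument is circular. The paper's proof resolves this by using $\omega\notin\mathcal{N}_{\mathfrak{c}_{r}}$ (together with (\ref{eq: equivalence under shift of basis})--(\ref{eq: equivalence pivot indices in key}) and a basis change) to manufacture a \emph{second key} $\mathfrak{c}_{\omega}=(\mathfrak{c}_{\mathcal{H}})_{r}\times\{\alpha_{1},\omega\}$ whose column set contains $\omega$; only then does Proposition \ref{prop: constant 2-key}, applied to $\mathfrak{c}_{\omega}$, make the mixed factor constant.

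Second, and more seriously, your closing step in the hard case $(m,\omega)\in\mathcal{N}(\mathfrak{c})$ is unjustified. There the vanishing $h(\mathcal{I}^{i_u}_{\omega})=0$ (or $h(\mathcal{I}^{m}_{\alpha_w})=0$) means precisely that the factors coupling $m$ or $\omega$ to $\mathfrak{c}$ in your quadrilateral decomposition with pivot $m^{*}$ are \emph{not} covered by Proposition \ref{prop: constant 2-key} applied to the auxiliary key $\{m^{*},i_{v}\}\times\{\alpha_{1},\alpha_{2}\}$ (e.g.\ $m\in\mathcal{N}^{\mathfrak{c}_{c}}$ exactly excludes $m$ from that proposition's hypotheses), and they take the binomial form (\ref{eq: special rational, binomial}); so the claim that ``the non-pivot factors are already in $\mathbb{C}$'' assumes what is to be proved. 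The paper's mechanism is different: non-planarity is used (after transposition) to produce a \emph{third column} $\alpha_{3}$ giving three (weak) keys $\mathfrak{c}_{(u,w)}=\mathfrak{c}_{r}\times\{\alpha_{u},\alpha_{w}\}$, $1\leq u<w\leq3$; a parity/double-counting of the even- versus odd-type configurations of Lemma \ref{lem: one configuration from 2-key} over these three keys (using their mutual incompatibility and Assumption \ref{claim: no Y=-1}) forces all of them to be even-type whenever $Y^{i_{1}m}_{\omega\alpha_{1}}\notin\mathbb{C}$, and the resulting explicit expressions are then fed into (\ref{eq: diagonal minors 3-term expansion}) to yield $\mathfrak{m}^{i_{1}i_{2}m}_{\alpha_{1}\alpha_{2}\alpha_{3}}=0$, contradicting genericity. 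Your sketch contains no substitute for this counting-plus-genericity contradiction, and your anticipated focus on the odd-type template is misplaced: in the paper the odd type is eliminated by parity, and the contradiction is extracted from the even-type configuration.
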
 
\begin{proof} 
The proof is presented in Appendix \ref{app: constant strip and principal minors} for completeness. 
\end{proof} 

\subsection{Identification of a Separable Basis}
\label{subsec: identification of a separable basis}
\begin{prop}
\label{prop: no non-trivial 2-curvature} 
Let $\mathfrak{c}$ be a 
non-planar key. Then, we have $Y_{\omega_{2}\omega_{1}}^{m_{1}m_{2}}\in\mathbb{C}$ for all $(m_{1},\omega_{1}),(m_{2},\omega_{2})\in\mathcal{N}(\mathfrak{c})$ with $h(\mathcal{I}_{\omega_{s}}^{m_{s}})\neq 0$ for $s\in\{1,2\}$. 
\end{prop}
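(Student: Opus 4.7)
The plan is to bootstrap from Proposition \ref{prop: constant strip and principal minors}, applied separately to each of the external pairs $(m_1,\omega_1)$ and $(m_2,\omega_2)$, and then reduce the target $Y_{\omega_2\omega_1}^{m_1 m_2}$ via associativity (\ref{eq: associativity}) and the quadrilateral decomposition (\ref{eq: quadrilateral decomposition}) to quantities already proved constant, closing the remaining case through the 3-index identity (\ref{eq: diagonal minors 3-term expansion}).

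First I would verify that Proposition \ref{prop: constant strip and principal minors} applies to each external pair. For any $(i,\alpha)\in\mathfrak{c}$, the second product in $\chi(\mathcal{I}\mid_{\alpha\omega_s}^{i m_s})$ equals $h(\mathcal{I}_\alpha^i)\cdot h(\mathcal{I}_{\omega_s}^{m_s})$, which is nonzero since $\mathfrak{c}$ is a key and $h(\mathcal{I}_{\omega_s}^{m_s})\neq 0$ by hypothesis; hence $\chi(\mathcal{I}\mid_{\alpha\omega_s}^{i m_s})$ is observable and Proposition \ref{prop: constant strip and principal minors} yields $Y_{\alpha_w\omega_s}^{m_s i_u}\in\mathbb{C}$ for all $s,u,w\in[2]$.

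Next, applying row associativity (\ref{eq: associativity}) with pivot $i_1\in\mathfrak{c}_r$ gives $Y_{\omega_2\omega_1}^{m_1 m_2} = -Y_{\omega_2\omega_1}^{m_1 i_1}\cdot Y_{\omega_2\omega_1}^{i_1 m_2}$, so it suffices to prove constant-ness of the two ``half-external'' terms $Y_{\omega_2\omega_1}^{m_1 i_1}$ and $Y_{\omega_2\omega_1}^{i_1 m_2}$; by symmetry I concentrate on $Y_{\omega_2\omega_1}^{m_1 i_1}$. Expanding it through the quadrilateral decomposition with pivot $(i_2,\alpha_1)\in\mathfrak{c}$ gives
\begin{equation*}
Y_{\omega_2\omega_1}^{m_1 i_1} = -Y_{\omega_2\alpha_1}^{m_1 i_2}\cdot Y_{\omega_2\alpha_1}^{i_2 i_1}\cdot Y_{\alpha_1\omega_1}^{m_1 i_2}\cdot Y_{\alpha_1\omega_1}^{i_2 i_1},
\end{equation*}
where the matching factor $Y_{\alpha_1\omega_1}^{m_1 i_2}$ is in $\mathbb{C}$ from the first step, and the two ``strip'' factors $Y_{\omega_2\alpha_1}^{i_2 i_1}$, $Y_{\alpha_1\omega_1}^{i_2 i_1}$, both having upper indices in $\mathfrak{c}_r$, are in $\mathbb{C}$ by Proposition \ref{prop: constant 2-key}, provided $\omega_1,\omega_2\notin\mathcal{N}_{\mathfrak{c}_r}$.

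The main obstacle is the residual cross-pair factor $Y_{\omega_2\alpha_1}^{m_1 i_2}$ together with the configurations in which $\omega_1$ or $\omega_2$ lies in $\mathcal{N}_{\mathfrak{c}_r}$ (respectively $m_1$ or $m_2$ in $\mathcal{N}^{\mathfrak{c}_c}$ by the dual argument), where Proposition \ref{prop: constant 2-key} does not apply directly. To close the argument I would invoke the 3-index identity (\ref{eq: diagonal minors 3-term expansion}) with $(a_1,a_2,a_3,\delta_1,\delta_2,\delta_3)=(m_1,i_1,m_2,\omega_1,\alpha_1,\omega_2)$, which produces a linear relation between $Y_{\omega_2\omega_1}^{m_1 m_2}$, $Y_{\omega_2\omega_1}^{m_1 i_1}$, the already proved-constant matching $Y$-terms, and the ratio $\mathfrak{m}$. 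The non-planarity of $\mathfrak{c}$ (Assumption \ref{claim: existence 3-key}) provides a third column $\alpha_3\notin\mathcal{N}_{\mathfrak{c}_r}$ (or, dually, a row $i_3\notin\mathcal{N}^{\mathfrak{c}_c}$), against which the even- and odd-type compatibility analysis of Lemma \ref{lem: one configuration from 2-key} — exactly as in the closing paragraph of the proof of Proposition \ref{prop: constant strip and principal minors} — rules out non-constant $Y$-terms, and Assumption \ref{claim: no Y=-1} ensuring $\mathfrak{m}\neq 0$ forces $Y_{\omega_2\omega_1}^{m_1 i_1}\in\mathbb{C}$, hence the thesis.
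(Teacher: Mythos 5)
Your opening steps are sound and coincide with the paper's: since $\mathfrak{c}$ is a key and $h(\mathcal{I}_{\omega_{s}}^{m_{s}})\neq0$, each $\chi(\mathcal{I}\mid_{\alpha\omega_{s}}^{im_{s}})$ is observable and Proposition \ref{prop: constant strip and principal minors} gives $Y_{\alpha_{w}\omega_{s}}^{m_{s}i_{u}}\in\mathbb{C}$. In fact, at that point the case $h(\mathcal{I}_{\omega_{2}}^{m_{1}})\cdot h(\mathcal{I}_{\omega_{1}}^{m_{2}})\neq0$ is already finished more directly than in your route: the cross sets $\chi(\mathcal{I}\mid_{\alpha\omega_{2}}^{im_{1}})$, $\chi(\mathcal{I}\mid_{\alpha\omega_{1}}^{im_{2}})$ are then observable too, so all four factors in the quadrilateral decomposition (\ref{eq: quadrilateral decomposition}) of $Y_{\omega_{1}\omega_{2}}^{m_{1}m_{2}}$ with pivot $(i,\alpha)\in\mathfrak{c}$ are constant. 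The genuine gap is the complementary case $h(\mathcal{I}_{\omega_{2}}^{m_{1}})\cdot h(\mathcal{I}_{\omega_{1}}^{m_{2}})=0$, which is exactly where your ``residual cross-pair factor'' and the configurations $\omega_{s}\in\mathcal{N}_{\mathfrak{c}_{r}}$ live, and your closing paragraph does not actually close it. First, the reduction to the ``half-external'' terms $Y_{\omega_{2}\omega_{1}}^{m_{1}i_{1}}$, $Y_{\omega_{2}\omega_{1}}^{i_{1}m_{2}}$ is a strictly stronger claim than the proposition: when $\omega_{1},\omega_{2}\in\mathcal{N}_{\mathfrak{c}_{r}}$ the sets $\chi(\mathcal{I}\mid_{\omega_{1}\omega_{2}}^{m_{1}i_{1}})$ may be non-observable, and non-observable $Y$-terms are not forced to be constant by any hypothesis (only their product need be), so proving them individually constant is not a viable target. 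Second, the identity (\ref{eq: diagonal minors 3-term expansion}) with $(m_{1},i_{1},m_{2};\omega_{1},\alpha_{1},\omega_{2})$ is one relation containing three uncontrolled quantities: the target, a half-external term, and $\mathfrak{m}_{\omega_{1}\alpha_{1}\omega_{2}}^{m_{1}i_{1}m_{2}}$, which a priori is neither constant nor a monomial; ``$\mathfrak{m}\neq0$'' gives nothing. Third, the even/odd analysis of Lemma \ref{lem: one configuration from 2-key} is not available here: its hypotheses require some $Y_{\alpha\omega}^{mi}\notin\mathbb{C}$ with $(i,\alpha)\in\mathfrak{c}$, whereas those terms are precisely the ones you have just shown to be constant, and in the closing paragraph of Proposition \ref{prop: constant strip and principal minors} that analysis treats a single external pair against the three columns $\alpha_{1},\alpha_{2},\alpha_{3}$, not two external pairs with vanishing cross terms.

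What the paper does in this hard case, and what is missing from your sketch, is: (i) note that $h(\mathcal{I}_{\omega_{2}}^{m_{1}})\cdot h(\mathcal{I}_{\omega_{1}}^{m_{2}})=0$ forces $h(\mathcal{I}_{\omega_{1}\omega_{2}}^{m_{1}m_{2}})\neq0$ and gives $Y_{\omega_{2}\omega_{1}}^{m_{1}m_{2}}=\tau-1$ the binomial form (\ref{eq: special rational, binomial}); (ii) change basis $\mathcal{I}\mapsto\mathcal{I}_{\omega_{1}}^{m_{1}}$ — legitimate precisely because $(m_{1},\omega_{1})\in\mathcal{N}(\mathfrak{c})$ keeps $\mathfrak{c}$ a non-planar key (Remark \ref{rem: invariance null arrays}) — apply Proposition \ref{prop: constant strip and principal minors} there, and use the $g$-factor identity (\ref{eq: g-factor for change of basis}) to obtain the crucial control $\Psi(\mathfrak{m}_{\alpha\omega_{1}\omega_{2}}^{im_{1}m_{2}})=\Psi(Y_{\omega_{2}\omega_{1}}^{m_{1}m_{2}}+1)$, which is what turns (\ref{eq: diagonal minors, quadratic polynomial}) into a usable constraint; (iii) run the discriminant analysis (\ref{eq: discriminant}) simultaneously for $u=1,2$ and exploit the symmetry under $(m_{1},\omega_{1})\leftrightarrows(m_{2},\omega_{2})$, showing that in both branches (discriminants non-square, or squares hence units) a non-constant $\tau$ forces $Y_{\omega_{1}\omega_{2}}^{i_{1}i_{2}}=-1$, contradicting Assumption \ref{claim: no Y=-1}. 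None of these steps — the change of basis, the control of $\Psi(\mathfrak{m})$, and the two-discriminant/conjugate-root argument — appears in your proposal, so the case that carries the content of the proposition remains unproved.
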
 
\begin{proof} 
The proof is presented in Appendix \ref{app: no non-trivial 2-curvature} for completeness. 
\end{proof} 

\begin{thm}
\label{thm: equality phases, single-exchange set} Under Assumption \ref{claim: existence 3-key}, we can find a separable basis $\mathcal{I}$. 
\end{thm}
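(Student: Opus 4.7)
The plan is to verify that the basis $\mathcal{I}$ furnished by Assumption \ref{claim: existence 3-key}, carrying a non-planar key $\mathfrak{c}:=\{i_{1},i_{2}\}\times\{\alpha_{1},\alpha_{2}\}$, is itself integrable (up to at most one preliminary adjacent exchange to secure condition (\ref{eq: local context for representation of 4-square}), as discussed in Remark \ref{rem: motivation naming keys}). By Lemma \ref{lem: case B^(ij)_(ab) square}, the task reduces to showing that $Y(\mathcal{I})_{\alpha\beta}^{ij}\in\mathbb{C}$ for every observable $\chi(\mathcal{I}\mid_{\alpha\beta}^{ij})$. The three propositions proved in Section \ref{sec: principal minors and hyperdeterminants} already cover the ``pure'' positions of the index pair $(\{i,j\},\{\alpha,\beta\})$ relative to $\mathfrak{c}$: Proposition \ref{prop: constant 2-key} settles the cases with $\{i,j\}=\mathfrak{c}_{r}$ or $\{\alpha,\beta\}=\mathfrak{c}_{c}$; Proposition \ref{prop: constant strip and principal minors} settles the ``strip'' cases with exactly one index of each slot lying inside $\mathfrak{c}$; and Proposition \ref{prop: no non-trivial 2-curvature} settles the ``null'' cases with both pairs lying in $\mathcal{N}(\mathfrak{c})$.

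The remaining observable $Y$-terms are reduced to these regimes by combining the associativity relations (\ref{eq: associativity}) with the quadrilateral decomposition (\ref{eq: quadrilateral decomposition}), inserting pivot indices $m\in\mathfrak{c}_{r}$ and $\omega\in\mathfrak{c}_{c}$:
\begin{equation*}
Y_{\alpha\beta}^{ij}=-Y_{\alpha\omega}^{im}\cdot Y_{\alpha\omega}^{mj}\cdot Y_{\omega\beta}^{im}\cdot Y_{\omega\beta}^{mj}.
\end{equation*}
Each of the four factors now has at least one index inside $\mathfrak{c}$, hence falls in the strip regime of Proposition \ref{prop: constant strip and principal minors}, or decomposes one step further via (\ref{eq: associativity}) into factors treatable by Proposition \ref{prop: constant 2-key} or, when the free indices sit in $\mathcal{N}(\mathfrak{c})$, by Proposition \ref{prop: no non-trivial 2-curvature}. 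Since $\mathbb{C}$ is closed under products, this delivers $Y_{\alpha\beta}^{ij}\in\mathbb{C}$ in every case.

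The main obstacle is the admissibility of the pivots: for an intermediate factor $Y_{\alpha\omega}^{im}$ to actually appear as a constant rather than as the degenerate expression (\ref{eq: special rational, binomial}) associated with a vanishing $h$-term, the corresponding four-square must be observable. Here the non-planarity of $\mathfrak{c}$ is crucial, as Definition \ref{def: explained key} forbids $\mathcal{N}(\mathfrak{c}_{c})=\mathcal{I}\setminus\mathfrak{c}_{r}$ and $\mathcal{N}(\mathfrak{c}_{r})=\mathcal{I}^{\mathtt{C}}\setminus\mathfrak{c}_{c}$, so at least one row or column outside the relevant null set is guaranteed. Coupled with the non-triviality of the dependence pattern from Subsection \ref{par: non-trivial dependence pattern}, this provides at least one admissible pivot whenever decomposition is required. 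If no admissible pivot exists in $\mathcal{I}$ itself, Remark \ref{rem: invariance null arrays} lets us pass to an adjacent basis via (\ref{eq: Grassmann-Plucker translation exchange, vertical})--(\ref{eq: Grassmann-Plucker translation exchange, diagonal}), preserving both $\mathfrak{c}$ as a non-planar key and the null-set structure $\mathcal{N}(\mathfrak{c})$, and transport the resulting constancy back to $\mathcal{I}$ through the same transformation rules. Once $Y_{\alpha\beta}^{ij}\in\mathbb{C}$ is secured on every observable set, Lemma \ref{lem: case B^(ij)_(ab) square} yields the integrability of $\mathcal{I}$.
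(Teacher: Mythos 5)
Your overall strategy is the same as the paper's (reduce, via Lemma \ref{lem: case B^(ij)_(ab) square}, to showing every observable $Y$-term of $\mathcal{I}$ is constant, then invoke Propositions \ref{prop: constant 2-key}, \ref{prop: constant strip and principal minors} and \ref{prop: no non-trivial 2-curvature} and glue the remaining cases with (\ref{eq: associativity}) and (\ref{eq: quadrilateral decomposition})), but there is a genuine gap in the gluing step. Proposition \ref{prop: constant strip and principal minors} only applies to a pair $(m,\omega)$ for which some $\chi(\mathcal{I}\mid_{\alpha\omega}^{im})$ with $(i,\alpha)\in\mathfrak{c}$ is \emph{observable}, and Proposition \ref{prop: constant 2-key} needs the free lower indices to avoid $\mathcal{N}_{\mathfrak{c}_{r}}$. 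In the decomposition $Y_{\omega_{1}\omega_{2}}^{i_{1}m_{2}}=-Y_{\omega_{1}\alpha_{w}}^{i_{1}i_{2}}\,Y_{\omega_{1}\alpha_{w}}^{i_{2}m_{2}}\,Y_{\alpha_{w}\omega_{2}}^{i_{1}i_{2}}\,Y_{\alpha_{w}\omega_{2}}^{i_{2}m_{2}}$ these hypotheses can fail simultaneously: precisely when $m_{2}\in\mathcal{N}^{\{\alpha_{1},\alpha_{2},\omega_{1}\}}$ (the case the paper isolates), the sets $\chi(\mathcal{I}\mid_{\alpha_{w}\omega_{1}}^{i_{u}m_{2}})$ may all fail to be observable, and a non-observable factor need not be constant, so constancy of the product cannot be inferred factor by factor. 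Non-planarity only guarantees one extra index $\alpha_{3}$ (or $i_{3}$) outside the null sets of $\mathfrak{c}$; it does not make the specific four-squares containing $m_{2}$ observable. Your fallback of moving to an adjacent basis via Remark \ref{rem: invariance null arrays} does not rescue this, because that remark says exactly that the null sets $\mathcal{N}(\mathfrak{c})$ are \emph{invariant} under such exchanges, so no admissible pivot is created.

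The paper closes this case with a different tool that your proposal has no substitute for: the permutation-invariant $g$-factor (\ref{eq: g-factor for change of basis})--(\ref{eq: g-factor}) coming from the hyperdeterminant structure of Remark \ref{rem: hyperdeterminant}. It yields the two expressions (\ref{eq: double expression g-factor for constant}) for the same constant ratio, hence the relation (\ref{eq: equivalence for constant Y-terms}), and then compatibility with the binomial form (\ref{eq: special rational, binomial}) of $Y(\mathcal{A})_{\omega_{2}\omega_{1}}^{\iota_{1}m_{2}}$ and $Y(\mathcal{A})_{\omega_{1}\omega_{2}}^{m_{2}i_{2}}$ forces $Y(\mathcal{I})_{\omega_{1}\omega_{2}}^{m_{2}i_{1}}\in\mathbb{C}$. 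The final reduction you describe for sets with no index in $\mathfrak{c}$ but with $(m_{1},\omega_{1})\notin\mathcal{N}(\mathfrak{c})$ does work (and is what the paper does), but it relies on the one-index case just discussed, so without the $g$-factor argument your proof does not go through.
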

\begin{proof}
By Lemma
\ref{lem: case B^(ij)_(ab) square}, the thesis is equivalent to 
$Y(\mathcal{I})_{\omega_{1}\omega_{2}}^{m_{1}m_{2}}\in\mathbb{C}$ for each observable set $\chi(\mathcal{I}|_{\omega_{1}\omega_{2}}^{m_{1}m_{2}})$, for which we choose a labelling 
such that $h(\mathcal{I}_{\omega_{1}}^{m_{1}})\cdot h(\mathcal{I}_{\omega_{2}}^{m_{2}})\neq0$. 
Considering the proofs of Propositions \ref{prop: constant 2-key} and \ref{prop: constant strip and principal minors}, 
it suffices to check observable sets $\chi(\mathcal{I}\mid_{\omega_{1}\omega_{2}}^{m_{1}m_{2}})$ where at most one of the indices (either upper or lower) comes from a non-planar key $\mathfrak{c}:=\{i_{1},i_{2}\}\times\{\alpha_{1},\alpha_{2}\}$; we begin by considering observable sets where one index comes from $\mathfrak{c}_{r}$, say $m_{1}=i_{1}$, as an analogous argument applies when either $\omega_{1}$ or $\omega_{2}$ belongs to $\mathfrak{c}_{c}$. We obtain $Y_{\omega_{1}\omega_{2}}^{i_{1}m_{2}}\in\mathbb{C}$ if there exists an index $\sigma\in\{\alpha_{1},\alpha_{2},\omega_{1}\}$ such that $h(\mathcal{I}_{\sigma}^{m_{2}})\neq0$ since, for $m_{1}=i_{1}$, at least one $w\in\{1,2\}$ makes $\chi(\mathcal{I}\mid_{\alpha_{w}\omega_{1}}^{i_{1}m_{2}})$ observable. Since $\chi(\mathcal{I}\mid_{\alpha_{w}\omega_{2}}^{i_{1}m_{2}})$ is also observable, Proposition \ref{prop: constant strip and principal minors} yields $Y_{\omega_{1}\omega_{2}}^{i_{1}m_{2}}=-Y_{\omega_{1}\alpha_{w}}^{i_{1}m_{2}}\cdot Y_{\alpha_{w}\omega_{2}}^{i_{1}m_{2}}\in\mathbb{C}$. Thus, we focus on $m_{2}\in\mathcal{N}^{\{\alpha_{1},\alpha_{2},\omega_{1}\}}$, for which $\mathfrak{c}$ remains a non-planar key under the transformation $\mathcal{I}\mapsto\mathcal{I}_{\omega_{2}}^{m_{2}}$ as per Remark \ref{rem: planarity preservation}. Along with $m_{1}=i_{1}$, this implies $
h(\mathcal{I}_{\alpha_{1}}^{i_{2}})\cdot h(\mathcal{I}_{\omega_{1}}^{i_{1}})\neq0\neq h(\mathcal{I}_{\alpha_{1}\omega_{2}}^{i_{2}m_{2}})\cdot h(\mathcal{I}_{\omega_{1}\omega_{2}}^{i_{1}m_{2}})$, which makes the terms $Y(\mathcal{I})_{\omega_{1}\alpha_{1}}^{i_{1}i_{2}}$ and $Y(\mathcal{I}_{\omega_{2}}^{m_{2}})_{\omega_{1}\alpha_{1}}^{i_{1}i_{2}}$ observable and hence constant by Proposition \ref{prop: constant 2-key}. Analogously, the observable terms $Y(\mathcal{I})_{\alpha_{1}\omega_{2}}^{i_{1}m_{2}}$ and $ Y(\mathcal{I})_{\alpha_{1}\omega_{2}}^{m_{2}i_{2}}$ belong to $\mathbb{C}$ by Proposition \ref{prop: constant strip and principal minors}; from (\ref{eq: associativity}), it follows that  $Y(\mathcal{I})_{\alpha_{1}\omega_{2}}^{i_{1}i_{2}}
\in\mathbb{C}$, which also implies $Y(\mathcal{I})_{\omega_{1}\omega_{2}}^{i_{1}i_{2}}=-Y(\mathcal{I})_{\omega_{1}\alpha_{1}}^{i_{1}i_{2}}\cdot Y(\mathcal{I})_{\alpha_{1}\omega_{2}}^{i_{1}i_{2}}
\in\mathbb{C}\setminus\{0,-1\}$ (as per Assumption \ref{claim: no Y=-1}) and, by applying (\ref{eq: Grassmann-Plucker translation exchange, vertical})-(\ref{eq: Grassmann-Plucker translation exchange, diagonal}) to this factorisation, $Y(\mathcal{I}_{\alpha_{1}}^{i_{1}})_{\omega_{1}\omega_{2}}^{\alpha_{1}i_{2}}\in\mathbb{C}\setminus\{0,-1\}$. Finally, (\ref{eq: g-factor for change of basis}) and (\ref{eq: g-factor}) give 
\begin{equation}
    \frac{Y(\mathcal{I}_{\alpha_{1}}^{i_{1}})_{\omega_{2}\omega_{1}}^{i_{2}m_{2}}+1}{Y(\mathcal{I})_{\omega_{2}\omega_{1}}^{i_{2}m_{2}}+1}=g_{\alpha_{1}\omega_{1}\omega_{2}}^{i_{1}i_{2}m_{2}}=\frac{Y(\mathcal{I}_{\omega_{2}}^{m_{2}})_{\omega_{1}\alpha_{1}}^{i_{1}i_{2}}+1}{Y(\mathcal{I})_{\omega_{1}\alpha_{1}}^{i_{1}i_{2}}+1}\in\mathbb{C}.
    \label{eq: double expression g-factor for constant}
\end{equation} 

With these premises, we focus on the bases $\mathcal{I}$ and $\mathcal{I}_{\alpha_{1}}^{i_{1}}$. At least one of the conditions $h(\mathcal{I}_{\omega_{1}}^{i_{2}})\neq0$ or $h((\mathcal{I}_{\alpha_{1}}^{i_{1}})_{\omega_{1}}^{i_{2}})\neq0$ holds since $\chi(\mathcal{I}\mid_{\alpha_{1}\omega_{1}}^{i_{1}i_{2}})$ is observable for $m_{1}=i_{1}$, while $h(\mathcal{A}_{\omega_{1}}^{m_{2}})=0$ 
for both $\mathcal{A}\in\{\mathcal{I},\mathcal{I}_{\alpha_{1}}^{i_{1}}\}$, 
as $m_{2}\in\mathcal{N}^{\{\alpha_{1},\alpha_{2},\omega_{1}\}}$. 
We then choose such $\mathcal{A}$
satisfying $h(\mathcal{A}_{\omega_{1}}^{i_{2}})\neq0$ and set $\{\pi_{1}\}:=\mathcal{A}\cap\{i_{1},\alpha_{1}\}$ to denote the corresponding upper index. Thus, both $Y(\mathcal{A})_{\omega_{1}\omega_{2}}^{m_{2}\pi_{1}}$ and $Y(\mathcal{A})_{\omega_{1}\omega_{2}}^{m_{2}i_{2}}$ take the form given in (\ref{eq: special rational, binomial}). In light of relation (\ref{eq: associativity}), these forms are consistent with $Y(\mathcal{A})_{\omega_{1}\omega_{2}}^{\pi_{1}i_{2}}\in\mathbb{C}\setminus\{0,-1\}$, as previously derived, only if $Y(\mathcal{A})_{\omega_{1}\omega_{2}}^{m_{2}i_{2}}\in\mathbb{C}$ as well. Hence, 
the numerator or denominator in (\ref{eq: double expression g-factor for constant}) belongs to $\mathbb{C}$, as does their ratio, implying both are constant. 
Since we already obtained   $Y(\mathcal{I})_{\omega_{1}\omega_{2}}^{i_{1}i_{2}}\in\mathbb{C}$, we conclude that $Y_{\omega_{1}\omega_{2}}^{i_{1}m_{2}}=-Y_{\omega_{1}\omega_{2}}^{i_{1}i_{2}}\cdot Y_{\omega_{1}\omega_{2}}^{i_{2}m_{2}}\in\mathbb{C}$. 

We now consider all remaining observable sets.  
For $(m_{1},\omega_{1}),(m_{2},\omega_{2})\in\mathcal{N}(\mathfrak{c})$, we obtain $Y_{\omega_{1}\omega_{2}}^{m_{1}m_{2}}\in\mathbb{C}$ by Proposition \ref{prop: no non-trivial 2-curvature}. In the remaining case where $(m_{s},\omega_{s})\notin\mathcal{N}(\mathfrak{c})$ for some $s\in\{1,2\}$, say $s=1$, there exists $(i_{u},\alpha_{w})\in\mathfrak{c}$ such that $h(\mathcal{I}_{\alpha_{w}}^{m_{1}})\cdot h(\mathcal{I}_{\omega_{1}}^{i_{u}})\neq0$.   
Thus, the sets $\chi(\mathcal{I}\mid_{\alpha_{w}\omega_{2}}^{m_{1}m_{2}})$ and $\chi(\mathcal{I}\mid_{\omega_{1}\omega_{2}}^{i_{u}m_{2}})$ are observable with an index $\alpha_{w}$ from $\mathfrak{c}_{c}$ and $i_{u}$ from $\mathfrak{c}_{r}$,  respectively, and the argument above applies to both. This leads to $Y_{\alpha_{w}\omega_{2}}^{m_{1}m_{2}},Y_{\omega_{1}\omega_{2}}^{i_{u}m_{2}}\in\mathbb{C}$. By Proposition \ref{prop: constant strip and principal minors}, we also obtain $Y_{\omega_{1}\alpha_{w}}^{m_{1}i_{u}},Y_{\alpha_{w}\omega_{2}}^{m_{2}i_{u}}\in\mathbb{C}$ since  
these terms are observable, concluding 
$Y_{\omega_{1}\omega_{2}}^{m_{1}m_{2}}=-(Y_{\omega_{1}\alpha_{w}}^{m_{1}i_{u}}\cdot(Y_{\alpha_{w}\omega_{2}}^{m_{1}m_{2}}\cdot Y_{\alpha_{w}\omega_{2}}^{m_{2}i_{u}}))\cdot Y_{\omega_{1}\omega_{2}}^{i_{u}m_{2}}\in\mathbb{C} 
$. 
\end{proof} 

\section{From Local to Global Separability}
\label{sec: propagation of separability between different bases} 

The results in the previous section mainly focus on observable $Y$-terms within a given basis. By connecting such local information from different bases, terms not examined under a specific choice of reference dimensions can still contribute to the derivation of a canonical form of the factors based on extended tetrads (Theorem \ref{thm: retrieval of canonical forms} below). Furthermore, even without Assumption \ref{claim: existence 3-key}, knowing that a separable basis exists allows us to infer separability for other bases in accordance with Theorem \ref{thm: transfer of separability} stated later. We address this extension by relaxing Assumption \ref{claim: existence 3-key} to a weaker condition ensuring the propagation of separability across bases: the existence of $g\in\mathcal{I}$ and $\kappa_{1},\kappa_{2}\in\mathcal{I}^{\mathtt{C}}$, with $\kappa_{1}\neq\kappa_{2}$, such that 
 \begin{equation}
    h(\mathcal{I}_{\kappa_{1}}^{g})\cdot h(\mathcal{I}_{\kappa_{2}}^{g})\neq 0.
    \label{eq: weak key}
 \end{equation}

\begin{rem}
\label{rem: transmission weak key} 
The existence of two indices satisfying (\ref{eq: weak key}) holds for a basis $\mathcal{I}$ if and only if it holds for any basis $\mathcal{J}\in\mathfrak{G}(\mathbf{L})$. Indeed, negating this property means requiring injectivity for any mapping $\varrho:\,\mathcal{I}^{\mathtt{C}}\rightarrow \mathcal{I}$ such that $h(\mathcal{I}_{\kappa}^{\varrho(\kappa)})\neq 0$ for all $\kappa\in\mathcal{I}^{\mathtt{C}}$. The other bases share the same structure, arising from exchanges of a set $\mathcal{A}\subseteq\mathcal{I}^{\mathtt{C}}$ with $\varrho(\mathcal{A})$ for any such mapping $\varrho$. In line with Remark \ref{rem: Grassmann-Plucker coordinates and factor score representations}, a dual argument applies concerning the existence of $g_{1},g_{2}\in\mathcal{I}$ and $\kappa\in\mathcal{I}^{\mathtt{C}}$ such that $h(\mathcal{I}_{\kappa}^{g_{1}})\cdot h(\mathcal{I}_{\kappa}^{g_{2}})\neq 0$. In the remainder of this section, we focus on (\ref{eq: weak key}) while keeping this duality in mind. 
\end{rem}
\begin{thm}
\label{thm: transfer of separability}
If there exists a separable basis $\mathcal{I}$ and indices $g\in\mathcal{I}$, $\kappa_{1},\kappa_{2}\in\mathcal{I}^{\mathtt{C}}$ such that (\ref{eq: weak key}) holds, then every basis in $\mathfrak{G}(\mathbf{L})$ is separable. In particular, separability follows from Assumption \ref{claim: existence 3-key}. 
\end{thm}
\begin{proof}
Let $\mathcal{I}$ be separable, and assume, for the sake of contradiction, that the thesis fails for a given basis $\mathcal{J}\in\mathfrak{G}(\mathbf{L})$. Consider a finite sequence  $\mathcal{L}^{(0)}:=\mathcal{I},\,\mathcal{L}^{(1)},\dots,\mathcal{L}^{(r)}:=\mathcal{J}$ of elements of $\mathfrak{G}(\mathbf{L})$, where $r=|\mathcal{I}\Delta\mathcal{J}|$ and $|\mathcal{L}^{(u-1)}\Delta\mathcal{L}^{(u)}|=2$ for all $u\in[r]$, as
stated in Lemma \ref{lem: chain of non-zero}. Define 
\begin{equation}
q:=\min\left\{ u\in[r]:\,\mathcal{L}^{(u-1)}\text{ is separable and }\text{\ensuremath{\mathcal{L}}}^{(u)}\text{ is non-separable}\right\} .
\label{eq: separability separation}
 \end{equation}
To simplify notation, let $\mathcal{A}:=\mathcal{L}^{(q-1)}$ and $\mathcal{B}:=\mathcal{L}^{(q)}$, with indices $v,\omega$ such that $\mathcal{B}=\mathcal{A}_{\omega}^{v}$. 
Definition (\ref{eq: separability separation}) implies the existence of an observable set $\chi(\mathcal{B}\mid_{\alpha\beta}^{ij})$ that is not separable, i.e., $Y(\mathcal{B})_{\alpha\beta}^{ij}\notin\mathbb{C}$ by Lemma \ref{lem: case B^(ij)_(ab) square}. 
By applying (\ref{eq: quadrilateral decomposition}), we express 
 \begin{equation}
Y(\mathcal{B})_{\alpha\beta}^{ij}=-Y(\mathcal{B})_{\alpha v}^{i\omega}\cdot Y(\mathcal{B})_{v\beta}^{i\omega}\cdot Y(\mathcal{B})_{\alpha v}^{\omega j}\cdot Y(\mathcal{B})_{v\beta}^{\omega j}. 
\label{eq: quadrilateral decomposition for separability transmission}
 \end{equation}
Since $Y(\mathcal{B})_{\alpha\beta}^{ij}\notin\mathbb{C}$, there exists a non-constant term on the right-hand side of (\ref{eq: quadrilateral decomposition for separability transmission}), say $Y(\mathcal{B})_{\nu\beta}^{i\omega}\notin\mathbb{C}$ by appropriately labelling $\{i,j\}$ and $\{\alpha,\beta\}$. 
From the transformation rule 
(\ref{eq: Grassmann-Plucker translation exchange, diagonal}), we obtain  $Y(\mathcal{A})_{\omega\beta}^{iv}\notin\mathbb{C}$, which is compatible with the separability hypothesis for $\mathcal{A}$ only if $\chi(\mathcal{A}\mid_{\omega\beta}^{iv})$ is not observable. From the condition $h(\mathcal{A}_{\omega}^{v})=h(\mathcal{B})\neq 0$, it follows that 
 \begin{equation}
    h(\mathcal{A}_{\beta}^{i})=0,\quad h(\mathcal{A}_{\omega\beta}^{iv})=h(\mathcal{B}_{\beta}^{i})=0, \quad h(\mathcal{A}_{\omega}^{i})\cdot h(\mathcal{A}_{\beta}^{v})=0. 
    \label{eq: structural equations from non-separable}
 \end{equation}
Conversely, since $\chi(\mathcal{B}|_{\alpha\beta}^{ij})$ is observable by construction, from the second condition in (\ref{eq: structural equations from non-separable}) we deduce 
\begin{equation}
h(\mathcal{A}_{\omega\alpha}^{iv})\cdot h(\mathcal{A}_{\omega\beta}^{jv})=h(\mathcal{B}_{\alpha}^{i})\cdot h(\mathcal{B}_{\beta}^{j})\neq 0 
\label{eq: observability of non-separable}
\end{equation} 
which implies that $v\notin\{i,j\}$, $\omega\notin\{\alpha,\beta\}$, and the sets $\chi(\mathcal{A}\mid_{\omega\alpha}^{iv})$ and $\chi(\mathcal{A}\mid_{\omega\beta}^{jv})$ are also observable. 
From the third condition in (\ref{eq: structural equations from non-separable}), $h(\mathcal{A}_{\omega}^{i})\neq0$ would imply $h(\mathcal{A}_{\beta}^{v})=0$, hence $h(\mathcal{A}_{\beta}^{j})\neq0$ in order for $h(\mathcal{A}_{\omega\beta}^{jv})\neq0$ to hold, as per (\ref{eq: observability of non-separable}); this would make both $Y(\mathcal{A})_{\omega\beta}^{jv}$ and $Y(\mathcal{A})_{\omega\beta}^{ji}$ observable (of the form (\ref{eq: special rational, binomial})), and hence constant, as well as their ratio $-Y(\mathcal{A})_{\omega\beta}^{iv}$ by (\ref{eq: associativity}), contradicting the hypothesis $Y(\mathcal{A})_{\omega\beta}^{iv}\notin\mathbb{C}$. By dual reasoning, we can use the third condition in (\ref{eq: structural equations from non-separable}) also to exclude $h(\mathcal{A}_{\beta}^{v})\neq0$. 
So $h(\mathcal{A}_{\omega}^{i})=h(\mathcal{A}_{\beta}^{v})=0$, and, to satisfy (\ref{eq: observability of non-separable}), we find $h(\mathcal{A}_{\alpha}^{i})\neq0\neq h(\mathcal{A}_{\beta}^{j})$. It follows that we can associate each $\pi\in\{\alpha,\beta,\omega\}=:\mathfrak{C}_{c}$ with an index $p\in\{i,j,v\}=:\mathfrak{C}_{r}$ such that $h(\mathcal{A}_{\pi}^{p})\neq0$, and vice versa---as a note, it can be directly verified that 
exactly three such pairs $(p,\pi)\in\mathfrak{C}_{r}\times\mathfrak{C}_{c}$ occur to make the aforementioned dependence conditions compatible with $Y(\mathcal{A})_{\omega\beta}^{iv}\notin\mathbb{C}$, as this precludes expressing $Y(\mathcal{A})_{\omega\beta}^{iv}$ as a product of observable (hence, constant) terms via (\ref{eq: quadrilateral decomposition}). 
We formalise such a correspondence $\varrho$ between $\mathfrak{C}_{c}$ and $\mathfrak{C}_{r}$ as 
$\varrho(\alpha):=i$, $\varrho(\beta):=j$, and $\varrho(\omega):=v$. We deduce   
    $Y(\mathcal{A})_{\alpha\beta}^{ij},Y(\mathcal{A})_{\omega\alpha}^{iv},Y(\mathcal{A})_{\omega\beta}^{jv}\in\mathbb{C}\setminus\{0,-1\}$, 
as these $Y$-terms originate from observable sets; furthermore, combining (\ref{eq: observability of non-separable}) with $h(\mathcal{B}_{\beta}^{i})=0$, as per (\ref{eq: structural equations from non-separable}), leads to the form (\ref{eq: special rational, binomial}) for $Y(\mathcal{A}_{\omega}^{v})_{\beta\alpha}^{ij}$. It follows that all coefficients in the polynomial (\ref{eq: diagonal minors, quadratic polynomial}) 
for $(a_{1},a_{2},a_{3}):=(v,i,j)$ and
$(\delta_{1},\delta_{2},\delta_{3}):=(\omega,\alpha,\beta)$ belong to $\mathbb{C}[\mathbf{t},\mathbf{t}^{-1}]$ (and, hence, to $\mathbb{F}$), where we have used (\ref{eq: g-factor for change of basis}) to obtain $\mathfrak{m}_{\omega\alpha\beta}^{vij}$; moreover, the discriminant (\ref{eq: discriminant}) can be a perfect square in $\mathbb{C}[\mathbf{t},\mathbf{t}^{-1}]$ only if it is constant. But this would imply $\mathfrak{m}_{\omega\alpha\beta}^{vij}\in\mathbb{C}$ as well, making all coefficients of (\ref{eq: diagonal minors, quadratic polynomial}) constant; since $Y(\mathcal{A})_{\omega\beta}^{iv}$ is a root of the equation 
    $P_{\omega\beta}^{iv}(X):=P_{\omega\alpha\beta}^{vij}\left(Y(\mathcal{A})_{\beta\alpha}^{ij}\cdot X\right)=0$,  
it would be constant too, contradicting $Y(\mathcal{A})_{\omega\beta}^{iv}\notin\mathbb{C}$. Thus, (\ref{eq: discriminant}) is not a perfect square, and by Lemma \ref{lem: quadratic extension from diagonal binomials} and Remark \ref{rem: hyperdeterminant},  
the term $Y(\mathcal{A})_{\omega\alpha}^{jv}$ 
is the unique conjugate root of $Y(\mathcal{A})_{\omega\beta}^{iv}$ in $\mathbb{F}$. 

We now invoke elements $g\in\mathcal{A}$ and $\kappa_{1},\kappa_{2}\in\mathcal{A}^{\mathtt{C}}$ such that (\ref{eq: weak key}) holds, as the existence of such elements is assumed for $\mathcal{I}$, so they exist for all bases, specifically for $\mathcal{A}$, by Remark \ref{rem: transmission weak key}. 
For each $u,w\in\{1,2\}$ and $\gamma_{1},\gamma_{2}\in\{\alpha,\beta,\omega\}$ with $\gamma_{1}\neq \gamma_{2}$ (hence, $\varrho(\gamma_{1})\neq\varrho(\gamma_{2})$), we also have $Y(\mathcal{A})_{\kappa_{u}\gamma_{w}}^{\varrho({\gamma_{w}})g}\in\mathbb{C}$, as it is observable in the separable basis $\mathcal{A}$, or it equals $-1$ for $g=\varrho(\gamma_{w})$ or $\kappa_{u}=\gamma_{w}$. Suppose  $Y(\mathcal{A})_{\gamma_{2}\gamma_{1}}^{\varrho(\gamma_{1})g}\notin\mathbb{C}$; then, from $Y(\mathcal{A})_{\gamma_{2}\gamma_{1}}^{\varrho(\gamma_{2})\varrho(\gamma_{1})}\in\mathbb{C}$ and (\ref{eq: associativity}), we infer that $Y(\mathcal{A})_{\gamma_{2}\gamma_{1}}^{\varrho(\gamma_{2})g}$ is also non-constant, implying that $g\notin\{\varrho(\gamma_{1}),\varrho(\gamma_{2})\}$ and $\kappa_{u}\notin\{\gamma_{1},\gamma_{2}\}$ for both $u\in\{1,2\}$ to exclude the constant values $Y(\mathcal{A})_{\kappa_{u}\gamma_{w}}^{\varrho({\gamma_{w}})g}$ for these terms. As above, the conditions $h(\mathcal{A}_{\gamma_{1}}^{g})=0$ and $h(\mathcal{A}_{\gamma_{2}\gamma_{1}}^{\varrho(\gamma_{1})g})=0$ are required to make $Y(\mathcal{A})_{\gamma_{2}\gamma_{1}}^{\varrho(\gamma_{2})g}$ and $Y(\mathcal{A})_{\gamma_{2}\gamma_{1}}^{\varrho(\gamma_{1})g}$ non-observable (hence, non-constant), respectively; the first condition guarantees $h(\mathcal{A}_{\kappa_{u}\gamma_{1}}^{\varrho(\gamma_{1})g})\cdot h(\mathcal{A}_{\gamma_{1}\gamma_{2}}^{\varrho(\gamma_{2})\varrho(\gamma_{1})})\neq 0$, ensuring that $Y(\mathcal{A}_{\gamma_{1}}^{\varrho(\gamma_{1})})_{\kappa_{u}\gamma_{2}}^{\varrho(\gamma_{2})g}$ is observable and, taking the form (\ref{eq: special rational, binomial}) by the second condition, belongs to $\mathbb{C}[\mathbf{t},\mathbf{t}^{-1}]$. 
Thus, we can adapt the previous argument by replacing $\mathfrak{C}_{c}$ and $\mathfrak{C}_{r}$ with $\{\kappa_{u},\gamma_{1},\gamma_{2}\}$ and $\{g,\varrho(\gamma_{1}),\varrho(\gamma_{2})\}$, respectively, finding that each discriminant $\Delta_{\kappa_{u}\gamma_{1}\gamma_{2}}^{g\varrho(\gamma_{1})\varrho(\gamma_{2})}$ 
is not a perfect square in $\mathbb{C}[\mathbf{t},\mathbf{t}^{-1}]$. 
However, in this case, both 
$Y_{\kappa_{1}\gamma_{1}}^{\varrho(\gamma_{1})\varrho(\gamma_{2})}$ and $Y_{\kappa_{2}\gamma_{1}}^{\varrho(\gamma_{1})\varrho(\gamma_{2})}$ 
would coincide with the unique conjugate of $Y_{\gamma_{2}\gamma_{1}}^{\varrho(\gamma_{1})g}$ in $\mathbb{F}$, leading to $Y_{\kappa_{1}\gamma_{1}}^{\varrho(\gamma_{1})\varrho(\gamma_{2})}=Y_{\kappa_{2}\gamma_{1}}^{\varrho(\gamma_{1})\varrho(\gamma_{2})}$, and, by (\ref{eq: associativity}), to $Y_{\kappa_{1}\kappa_{2}}^{\varrho(\gamma_{1})\varrho(\gamma_{2})}=-1$. Having $\kappa_{1}\neq\kappa_{2}$ and $\varrho(\gamma_{1})\neq\varrho(\gamma_{2})$, this contradicts Assumption \ref{claim: no Y=-1}. It follows that $\Delta_{\kappa_{u}\gamma_{1}\gamma_{2}}^{\varrho(\gamma_{1})\varrho(\gamma_{2})g}$ belongs to $\mathbb{C}$, 
and all the coefficients of $P_{\kappa_{u}\gamma_{1}\gamma_{2}}^{\varrho(\gamma_{1})\varrho(\gamma_{2})g}$ are constant; therefore, 
$Y_{\gamma_{1}\gamma_{2}}^{\varrho(\gamma_{s})g},Y_{\gamma_{2}\kappa_{u}}^{g\varrho(\gamma_{1})}\in\mathbb{C}$ for all $u,s\in \{1,2\}$ and $\gamma_{1},\gamma_{2}\in\{\alpha,\beta,\nu\}$. All the terms on the right-hand side of 
\begin{equation}
    Y(\mathcal{A})_{\omega\beta}^{iv}= Y(\mathcal{A})_{\omega\alpha}^{iv} \cdot\left(Y(\mathcal{A})_{\alpha\kappa_{2}}^{ig}\cdot Y(\mathcal{A})_{\kappa_{2}\beta}^{ig}\right) \cdot \left(Y(\mathcal{A})_{\alpha\kappa_{2}}^{gv}\cdot Y(\mathcal{A})_{\kappa_{2}\beta}^{gv}\right). 
\label{eq: pentagonal decomposition from weak key}
\end{equation} 
have this form, come from an observable set, or coincide with $-1$ (e.g., if $g\in\{v,i\}$). 
Since we have $Y(\mathcal{A})_{\omega\beta}^{iv}\in\mathbb{C}$, which leads to a contradiction, the thesis follows. 
\end{proof}
We now leverage the previous proof to derive a canonical form for factors $\mathbf{L}$ and $\mathbf{R}$ in (\ref{eq: Cauchy-Binet expansion}). 
\begin{thm}
\label{thm: retrieval of canonical forms}
Consider the family of minor products $( h(\mathcal{I}):\,\mathcal{I}\in\mathfrak{G}(\mathbf{L}))$ as the available information. If there exist $g\in\mathcal{I}$ and $\kappa_{1},\kappa_{2}\in \mathcal{I}^{\mathtt{C}}$ satisfying (\ref{eq: weak key}), then we can recover a canonical form  $\left\{\mathbf{L^{\star}},\mathbf{R^{\star}}\right\}$ for the factors in (\ref{eq: Cauchy-Binet expansion}). 
\end{thm}
\begin{proof}
Each observable term $Y_{\alpha\beta}^{ij}$ is a root of the polynomial derived from (\ref{eq: quartic from Grassmann-Plucker, 2}), namely:  
\begin{equation}
F_{\alpha\beta}^{ij}(X) := \left(h(\mathcal{I}_{\beta}^{i})\cdot h(\mathcal{I}_{\alpha}^{j})\right)\cdot X^{2} 
+ \left(h(\mathcal{I}_{\beta}^{i})\cdot h(\mathcal{I}_{\alpha}^{j})-h(\mathcal{I})\cdot h(\mathcal{I}_{\alpha\beta}^{ij})+h(\mathcal{I}_{\alpha}^{i})\cdot h(\mathcal{I}_{\beta}^{j})\right)\cdot X 
+ h(\mathcal{I}_{\alpha}^{i})\cdot h(\mathcal{I}_{\beta}^{j}).
\label{eq: polynomial h to Y}
\end{equation}
To resolve potential ambiguity in assigning a root of $F_{\alpha\beta}^{ij}$ to $Y_{\alpha\beta}^{ij}$, we begin by identifying two cases where this association 
is \emph{unambiguous}: one occurs when $F_{\alpha\beta}^{ij}$ has coinciding roots, including $-1$ for the degenerate cases $i=j$ or $\alpha=\beta$ (as per Assumption \ref{claim: no Y=-1}); the other arises from non-key observable sets, which yield unique non-zero $Y$-terms by Lemma \ref{lem: algebraic forms Y-terms}. Next, we introduce the family $\mathcal{Y}$ of observable terms $Y_{\gamma\delta}^{lm}$ for which $(-1)^{c+1}\cdot Y_{\gamma\delta}^{lm}$ cannot be expressed, through iterated applications of (\ref{eq: associativity}), as a product of $c$ unambiguous terms for some $c\in\mathbb{N}_{0}$. 
Thus, each $Y_{\gamma\delta}^{lm}\in\mathcal{Y}$ must derive from a key whose associated polynomial (\ref{eq: polynomial h to Y}) has distinct roots. For every key $\{m_{1},m_{2}\}\times\{\gamma_{1},\gamma_{2}\}$, if there exists $s\in\mathcal{I}$ such that $h(\mathcal{I}_{\gamma_{1}}^{s})=0$, then we can express $Y_{\gamma_{1}\gamma_{2}}^{m_{1}m_{2}}$ as a product of unambiguous terms derived from (\ref{eq: special rational, binomial}); this follows by applying (\ref{eq: associativity}) with $Y_{\gamma_{1}\gamma_{2}}^{m_{1}s}$ and $Y_{\gamma_{1}\gamma_{2}}^{sm_{2}}$ when $h(\mathcal{I}_{\gamma_{2}}^{s})\neq0$, or otherwise by using (\ref{eq: quadrilateral decomposition}) with $Y_{\sigma\gamma_{w}}^{m_{u}s}$ for $u,w\in\{1,2\}$ and any $\sigma\in\mathcal{I}^{\mathtt{C}}$ such that $h(\mathcal{I}_{\sigma}^{s}) \neq 0$, which exists by Section \ref{par: non-trivial dependence pattern}. Dually, the same argument also applies to lower indices. 
Thus, for any $Y_{\gamma_{1}\gamma_{2}}^{m_{1}m_{2}}\in\mathcal{Y}$, the $Y$-terms obtained by a \emph{single-index change}, i.e., $Y_{\gamma_{1}\sigma}^{m_{1}m_{2}}$ or $Y_{\gamma_{1}\gamma_{2}}^{m_{1}s}$, originate from (weak) keys satisfying (\ref{eq: local context for representation of 4-square}). By selecting $a\in\mathcal{I}$ and $\omega\in\mathcal{I}^{\mathtt{C}}$ such that $h(\mathcal{I}_{\omega}^{a})=0$, we can also determine any $Y_{\gamma_{1}\gamma_{2}}^{m_{1}m_{2}}$ satisfying this condition, applying (\ref{eq: quadrilateral decomposition}) with unambiguous terms $Y_{\omega\gamma_{w}}^{am_{u}}$ ($u,w\in\{1,2\}$) in the form (\ref{eq: special rational, binomial}). This resolves ambiguity from $\mathcal{Y}$ when $\mathfrak{G}(\mathbf{L})\neq\wp_{k}[n]$ moving to a basis for which such $a$ and $\omega$ exist, as described in the proof of Proposition \ref{prop: constant 2-key}. This argument also implies that, for any $Y_{\gamma_{1}\gamma_{2}}^{m_{1}m_{2}}\in\mathcal{Y}$, both terms associated by a single-index change, e.g., $Y_{\gamma_{1}\sigma}^{m_{1}m_{2}}$ and $Y_{\sigma\gamma_{2}}^{m_{1}m_{2}}$, are observable and, by definition, at least one of them belongs to $\mathcal{Y}$.

If $\mathcal{Y}\neq\emptyset$, we choose a root of $F_{\alpha\beta}^{ij}$ for a fixed $Y_{\alpha\beta}^{ij}\in\mathcal{Y}$. By considering the relations between the roots and coefficients of a polynomial, as in \citet[Eq. (116)]{Angelelli2025}, this choice determines any $Y$-term in $\mathcal{Y}$ obtained from $Y_{\alpha\beta}^{ij}$ by a single-index change. 
For any $Y_{\gamma\delta}^{lm}\in\mathcal{Y}$, we follow the proof of \citet[Prop. 18]{Angelelli2025} to obtain sequences of $Y$-terms in $\mathcal{Y}$ that begin with $Y_{\alpha\beta}^{ij}$ and end with $Y_{\gamma\delta}^{lm}$, where each $Y$-term is obtained from the preceding one by a single-index change. By the previous argument, the choice of the root of $F_{\alpha\beta}^{ij}$ for $Y_{\alpha\beta}^{ij}$ determines the terms in these sequences, including $Y_{\gamma\delta}^{lm}$. 

We now extend this reasoning to the remaining $Y$-terms. If $\mathcal{Y}\neq \emptyset$, we fix a root for a given element, ensuring that every other $Y$-term in $\mathcal{Y}$ is determined by the previous procedure. 
Thus, if a term $Y_{\omega_{1}\omega_{2}}^{q_{1}q_{2}}\notin\mathcal{Y}$ remains undetermined, it is non-observable and cannot be expressed as a product of unambiguous or observable $Y$-terms by iterating (\ref{eq: associativity}). In particular, when expressing $Y_{\omega_{1}\omega_{2}}^{q_{1}q_{2}}$ as a product of terms $Y_{\kappa_{1}\omega_{w}}^{gq_{u}}$, at least one choice of $q\in\{q_{1},q_{2}\}$ and $\omega\in\{\omega_{1},\omega_{2}\}$ must return an undetermined, hence non-observable, term. This implies $h(\mathcal{I}_{\omega}^{q})=0$ and $h(\mathcal{I}_{\kappa_{1}}^{q})\cdot h(\mathcal{I}_{\omega}^{g})=0$, where $q\neq g$, as $Y_{\kappa_{1}\omega}^{gq}=-1$ would otherwise be unambiguous. The assumption in Section \ref{par: non-trivial dependence pattern} allows us to find $w\in\mathcal{I}$ and $\varrho\in\mathcal{I}^{\mathtt{C}}$ such that $h(\mathcal{I}_{\omega}^{w})\cdot h(\mathcal{I}_{\varrho}^{q})\neq 0$, implying $q\neq w$ and $\varrho \neq \omega$. 
If $h(\mathcal{I}_{\kappa_{1}}^{q})\neq 0$, then from $h(\mathcal{I}_{\kappa_{1}}^{q})\cdot h(\mathcal{I}_{\omega}^{g})=0$ we deduce $h(\mathcal{I}_{\omega}^{g})=0$; but in this case, $Y_{\kappa_{1}\omega}^{gq}=-Y_{\kappa_{1}\omega}^{wq}/Y_{\kappa_{1}\omega}^{wg}$ would be determined, as both terms on the right-hand side would be observable and take the form (\ref{eq: special rational, binomial}). By the same token, we obtain $h(\mathcal{I}_{\omega}^{g})=0$. 
Given these conditions, the case $h(\mathcal{I}_{\kappa_{2}}^{q})\neq 0$ would imply 
$Y_{\kappa_{1}\omega}^{gq}=Y_{\kappa_{1}\kappa_{2}}^{gq}\cdot Y_{\kappa_{2}\omega}^{gw}\cdot Y_{\kappa_{2}\omega}^{wq}$, which once again determines $Y_{\kappa_{1}\omega}^{gq}$ as the product of observable terms identified by (\ref{eq: special rational, binomial}). Thus, we conclude $h(\mathcal{I}_{\kappa_{2}}^{q})=0$. The vanishing of $h(\mathcal{I}_{\kappa_{1}}^{q})$, $h(\mathcal{I}_{\omega}^{q})$, $h(\mathcal{I}_{\omega}^{g})$, and $h(\mathcal{I}_{\kappa_{2}}^{q})$ implies $g \neq q\neq w \neq g$, $\varrho \neq \omega \neq \kappa_{s} \neq \varrho$, and $h(\mathcal{I}_{\kappa_{s}}^{g})\cdot h(\mathcal{I}_{\kappa_{s}\omega\varrho}^{gwq})\neq 0$ for both $s\in\{1,2\}$. It follows that $Y(\mathcal{I}_{\kappa_{s}}^{g})_{\omega\varrho}^{qw}$ is observable in the basis $\mathcal{I}_{\kappa_{s}}^{g}$ and determined by (\ref{eq: special rational, binomial}), since $h(\mathcal{I}_{\omega}^{g})=h(\mathcal{I}_{\omega}^{q})=0$ implies $h((\mathcal{I}_{\kappa_{s}}^{g})_{\omega}^{q})=0$. Therefore, $Y_{\omega\varrho}^{qw}$, $Y_{\varrho\kappa_{s}}^{gq}$, and $Y_{\omega\kappa_{s}}^{gw}$, as well as $Y(\mathcal{I}_{\kappa_{s}}^{g})_{\omega\varrho}^{qw}$, are determined for both $s\in\{1,2\}$, as they are observable and unambiguous. These establish the coefficients of the polynomials $P_{\varrho\kappa_{1}\omega}^{qgw}$ and $P_{\varrho\kappa_{2}\omega}^{qgw}$ in (\ref{eq: diagonal minors, quadratic polynomial}), via (\ref{eq: g-factor}), which are also determined. 
Denoting their respective roots as 
$Y_{\omega\kappa_{u}}^{gw}\cdot X_{+}^{(u)}$ and $Y_{\omega\kappa_{u}}^{gw}\cdot X_{-}^{(u)}$ for $u\in\{1,2\}$, with $X_{+}^{(1)}=X_{+}^{(2)}=Y_{\omega\varrho}^{qg}$, it follows from (\ref{eq: conjugate root}) that $X_{-}^{(u)}=Y_{\varrho\kappa_{u}}^{wq}$. 
From $w\neq q$ and $\kappa_{1}\neq \kappa_{2}$, we obtain $Y_{\kappa_{1}\kappa_{2}}^{wq}\neq-1$, which requires $X_{-}^{(1)} \neq  X_{-}^{(2)}$. Thus, the only common root of $P_{\varrho\kappa_{1}\omega}^{qgw}(Y_{\omega\kappa_{1}}^{gw}\cdot X)$ and $P_{\varrho\kappa_{2}\omega}^{qgw}(Y_{\omega\kappa_{2}}^{gw}\cdot X)$ is the unique possible value for $Y_{\omega\varrho}^{qg}$, which in turn determines $Y_{\kappa_{1}\omega}^{gq}=-Y_{\kappa_{1}\varrho}^{gq}\cdot Y_{\varrho\omega}^{gq}$, i.e., a contradiction. Hence, the $Y$-terms are determined, and we use them to obtain a canonical form for $\mathbf{R}$ by applying (\ref{eq: gauge invariance}) along with suitable changes of basis in $\mathbb{C}^{k}$. Specifically, 
adopting the choice in \citet[Eqs. (35)--(38)]{Angelelli2025}, the transformed factor $\mathbf{R^{\star}}$ 
satisfies $R_{i,j}^{\star}=\delta_{ij}$ and $R_{i,\alpha}^{\star}=c_{1}c_{2}\cdot Y_{\kappa_{1}\alpha}^{gi}$ for all $i,j\in\mathcal{I}$ and $\alpha\in\mathcal{I}^{\mathtt{C}}$, with $c_{1},c_{2}$ defined in (\ref{eq: signs for permutation}). This allows us to recover the transformed pattern matrix $\mathbf{L^{\star}}$ via $\Delta_{\mathbf{L^{\star}}}(\mathcal{I})=h(\mathcal{I})\cdot\Delta_{\mathbf{R^{\star}}}(\mathcal{I})^{-1}$. 

The only remaining ambiguity is the choice of a root for a given element in $\mathcal{Y}$ when $\mathcal{Y}\neq \emptyset$. As discussed above, there is a basis where this ambiguity is resolved, unless $\mathfrak{G}(\mathbf{L}^{\star})=\wp_{k}[n]$; in the latter case, both factors $\mathbf{L}^{\star}$ and $\mathbf{R}^{\star}$ satisfy Assumption \ref{claim: no Y=-1}, and their roles are interchangeable. The two factor pairs, $(\mathbf{L}^{\star},\mathbf{R}^{\star})$ and $((\mathbf{R}^{\star})^{\mathtt{T}},(\mathbf{L}^{\star})^{\mathtt{T}})$, yield the same list of minor products. Since there are only two such pairs (corresponding to the two roots for a given $Y$-term in $\mathcal{Y}$), they are the only possible configurations and derive from the same set $\{\mathbf{L}^{\star},\mathbf{R}^{\star}\}$. 
\end{proof}
The previous proof does not hold if (\ref{eq: weak key}) is not satisfied. In particular, let  $\mathbf{L^{\star}}:=\left(\idd_{k}|\idd_{k}\right)$; then both the assignments $\left(\idd_{k}|\mathbf{r}\right)^{\mathtt{T}}$ and $\left(\idd_{k}|\mathbf{r}^{\mathtt{T}}\right)^{\mathtt{T}}$ to $\mathbf{R^{\star}}$ yield the same minor products. 
\begin{lem}
\label{lem: independence on base-set} 
If all the bases in $\mathfrak{G}(\mathbf{L})$ are separable, then for all $i\triangledown_{\mathcal{J}}\alpha$, according to (\ref{eq: distinguishability relation on columns}), the function
 \begin{equation}
\psi_{2}(i;\alpha):=\Psi\left(h(\mathcal{J})^{-1}\cdot h(\mathcal{J}_{\alpha}^{i})\right)=\Psi\left(h(\mathcal{J}_{\alpha}^{i})\right)-\Psi\left(h(\mathcal{J})\right)
\label{eq: quantification distinguishable columns}
 \end{equation}
does not depend on the choice of the set $\mathcal{J}\in\mathfrak{G}(\mathbf{L})$ such that $\mathcal{J}_{\alpha}^{i}\in\mathfrak{G}(\mathbf{L})$ as well. Specifically, this holds under the hypotheses of Theorem \ref{thm: transfer of separability}, and in particular when Assumption \ref{claim: existence 3-key} is satisfied. 
\end{lem}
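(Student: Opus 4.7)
The plan is to introduce the family
\[
\mathcal{F}:=\{\mathcal{J}\in\mathfrak{G}(\mathbf{L}):\,\alpha\in\mathcal{J},\,\beta\in\mathcal{J}^{\mathtt{C}},\,\mathcal{J}_{\beta}^{\alpha}\in\mathfrak{G}(\mathbf{L})\}
\]
on which $\psi_{2}(\alpha;\beta)$ is defined, prove invariance for pairs in $\mathcal{F}$ differing by a single exchange via direct integrability, and then extend to arbitrary pairs by chaining inside $\mathcal{F}$.

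For the single-exchange case, fix $\mathcal{J}_{1}\in\mathcal{F}$ and $\mathcal{J}_{2}:=(\mathcal{J}_{1})_{j}^{i}\in\mathcal{F}$ with $i\in\mathcal{J}_{1}\setminus\{\alpha\}$, $j\in\mathcal{J}_{1}^{\mathtt{C}}\setminus\{\beta\}$. The observable set $\chi(\mathcal{J}_{1}\mid_{\beta j}^{\alpha i})$ contains the two products $h(\mathcal{J}_{1})\cdot h((\mathcal{J}_{2})_{\beta}^{\alpha})$ and $h((\mathcal{J}_{1})_{\beta}^{\alpha})\cdot h(\mathcal{J}_{2})$, both non-vanishing by the definition of $\mathcal{F}$. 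Integrability of $\mathcal{J}_{1}$ forces every non-zero element of $\chi(\mathcal{J}_{1}\mid_{\beta j}^{\alpha i})$ to share the same image under $\Psi$, so in particular
\[
\Psi(h(\mathcal{J}_{1}))+\Psi(h((\mathcal{J}_{2})_{\beta}^{\alpha}))=\Psi(h((\mathcal{J}_{1})_{\beta}^{\alpha}))+\Psi(h(\mathcal{J}_{2})),
\]
which rearranges to the desired equality $\psi_{2}(\alpha;\beta)\vert_{\mathcal{J}_{1}}=\psi_{2}(\alpha;\beta)\vert_{\mathcal{J}_{2}}$.

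For arbitrary $\mathcal{J}_{1},\mathcal{J}_{2}\in\mathcal{F}$ I would construct a chain $\mathcal{J}_{1}=\mathcal{L}_{0},\mathcal{L}_{1},\dots,\mathcal{L}_{r}=\mathcal{J}_{2}$ inside $\mathcal{F}$ with $\#(\mathcal{L}_{u-1}\Delta\mathcal{L}_{u})=2$ for each $u\in[r]$ and telescope the single-exchange invariance along it. The main obstacle is the existence of such a chain: removing $\alpha$ from every member identifies $\mathcal{F}$ with the common bases of two matroids on $[n]\setminus\{\alpha,\beta\}$, namely the $(k-1)$-subsets $S$ with $S\cup\{\alpha\}\in\mathfrak{G}(\mathbf{L})$ and those with $S\cup\{\beta\}\in\mathfrak{G}(\mathbf{L})$, so the required single-exchange connectivity is a matroid-intersection statement rather than a direct consequence of (\ref{eq: exchange relation}). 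I would argue by induction on $\#(\mathcal{J}_{1}\Delta\mathcal{J}_{2})$: after picking $i\in\mathcal{J}_{1}\setminus\mathcal{J}_{2}$ (necessarily $i\neq\alpha,\beta$), I apply (\ref{eq: symmetric exchange relation}) to both $(\mathcal{J}_{1},\mathcal{J}_{2})$ and $((\mathcal{J}_{1})_{\beta}^{\alpha},(\mathcal{J}_{2})_{\beta}^{\alpha})$ in $\mathfrak{G}(\mathbf{L})$ and seek a common exchange partner $j\in\mathcal{J}_{2}\setminus\mathcal{J}_{1}$ producing $(\mathcal{J}_{1})_{j}^{i}\in\mathcal{F}$ closer to $\mathcal{J}_{2}$; when the two sets of admissible targets fail to intersect, a short augmenting-path rearrangement that combines two distinct exchanges keeps the induction going.

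Finally, the ``in particular'' clause is immediate: under the hypothesis of Theorem \ref{thm: transfer of integrability}, and a fortiori under Assumption \ref{claim: existence 3-key} via Theorem \ref{thm: equality phases, single-exchange set}, every basis of $\mathfrak{G}(\mathbf{L})$ is integrable, so the main hypothesis of the lemma holds.
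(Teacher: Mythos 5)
Your single-exchange step is correct and coincides with the base case of the paper's own induction: when the four sets $\mathcal{J}_{1}$, $(\mathcal{J}_{1})_{\beta}^{\alpha}$, $\mathcal{J}_{2}=(\mathcal{J}_{1})_{j}^{i}$, $(\mathcal{J}_{2})_{\beta}^{\alpha}$ are all bases, the observable set $\chi(\mathcal{J}_{1}\mid_{\beta j}^{\alpha i})$ and integrability of $\mathcal{J}_{1}$ give exactly the telescoping identity you state. The problem is everything after that. Your reduction of the general case to a chain of single exchanges \emph{inside} the family $\mathcal{F}$ is precisely the step that can fail, and your escape clause (``a short augmenting-path rearrangement that combines two distinct exchanges keeps the induction going'') is not an argument: invariance of $\psi_{2}$ under a double exchange with no intermediate basis in $\mathcal{F}$ does not follow from the single-exchange case, so in the obstructed situation you would need a genuinely new identity, which you never supply. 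Note also that you cannot hope to appeal to a general connectivity principle: as you yourself observe, $\mathcal{F}$ is a set of common bases of two matroids, and the exchange graph of common bases of two matroids need not be connected under single symmetric exchanges (already for two partition matroids, i.e.\ perfect matchings of an even cycle). So the connectivity you need, if true here, must come from the specific algebraic structure, not from (\ref{eq: symmetric exchange relation}).

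The paper's proof shows where the real work lies and why your framing is too rigid. It inducts on $\kappa=\#(\mathcal{I}\setminus\mathcal{J})$ with the induction hypothesis quantified over \emph{all} exchange pairs $(i,\alpha)\in(\mathcal{I}\setminus\mathcal{J})\times(\mathcal{J}\setminus\mathcal{I})$, not just the fixed pair defining $\psi_{2}$. The easy case (your ``common exchange partner'') is handled exactly as you propose, via (\ref{eq: split of the lemma thesis}). But when every candidate intermediate fails, i.e.\ when (\ref{eq: complementary pattern under (i,a)}) holds, the paper does not stay inside $\mathcal{F}$ at all: it uses the exchange axiom to produce auxiliary indices $p$, $\pi$, $\varrho$ and applies the induction hypothesis to \emph{different} exchange pairs (e.g.\ exchanging $i$ with $p$, and $\alpha$ with $p$) on bases such as $\mathcal{I}_{\alpha}^{p}$, $\mathcal{J}_{\alpha}^{\pi}$, $\mathcal{J}_{p}^{\varrho}$, obtaining (\ref{eq: pivoting 1 b})--(\ref{eq: pivoting 1 c}) and (\ref{eq: pivoting 2 b})--(\ref{eq: pivoting 2 c}), whose difference yields the claim for the original pair. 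This pivoting through other index pairs is the missing idea in your proposal; without it (or a proved connectivity statement for $\mathcal{F}$, which you have not established and which does not hold at the level of generality you invoke), the induction does not close. The final ``in particular'' clause of your write-up is fine.
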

\begin{proof}
Take two different sets $\mathcal{I}_{1}:=\mathcal{I}$ and $\mathcal{J}_{1}:=\mathcal{J}$ in $\mathfrak{G}(\mathbf{L})$ such that $\mathcal{I}_{2}:=\mathcal{I}_{\alpha}^{i}$ and $\mathcal{J}_{2}:=\mathcal{J}_{\alpha}^{i}$ also belong to $\mathfrak{G}(\mathbf{L})$. Note that this implies $i\in\mathcal{I}\cap\mathcal{J}$, $\alpha\in\mathcal{I}^{\mathtt{C}}\cap\mathcal{J}^{\mathtt{C}}$, $\mathcal{I}_{1}\setminus\mathcal{J}_{1}=\mathcal{I}_{2}\setminus\mathcal{J}_{2}$, and $\mathcal{J}_{1}\setminus\mathcal{I}_{1}=\mathcal{J}_{2}\setminus\mathcal{I}_{2}$, so we can omit the subscripts in these difference sets. The proof proceeds by induction on $\kappa:=|\mathcal{I}\setminus\mathcal{J}|$. We begin with the base case $\kappa=1$, setting $\mathcal{I}\setminus\mathcal{J}=:\{j\}$ and $\mathcal{J}\setminus\mathcal{I}=:\{\beta\}$. 
The condition $\mathcal{I}_{1},\mathcal{J}_{1},\mathcal{I}_{2},\mathcal{J}_{2}\in\mathfrak{G}(\mathbf{L})$ implies that $\chi(\mathcal{I}|_{\alpha\beta}^{ij})$ is observable, and thus the separability of the bases in $\mathfrak{G}(\mathbf{L})$ 
establishes the thesis. 

Now, assume the thesis holds for all $\mathcal{I},\mathcal{J}\in\mathfrak{G}(\mathbf{L})$ such that $|\mathcal{I}\setminus\mathcal{J}|\leq\kappa$, and consider any $4$-tuple of bases $(\mathcal{I},\mathcal{I}_{\alpha}^{i},\mathcal{J},\mathcal{J}_{\alpha}^{i})$ with $|\mathcal{I}\setminus\mathcal{J}|=\kappa+1$. 
If there exist $j\in\mathcal{I}\setminus\mathcal{J}$ and $\beta\in\mathcal{J}\setminus\mathcal{I}$ such that $h(\mathcal{I}_{\beta}^{j})\cdot h(\mathcal{I}_{\alpha\beta}^{ij})\neq0$, 
then we obtain the thesis as 
\begin{equation}
\Psi\left(h(\mathcal{I})^{-1}\cdot h(\mathcal{I}_{\alpha}^{i})\right) =  \Psi\left(h(\mathcal{I}_{\beta}^{j})^{-1}\cdot h((\mathcal{I}_{\beta}^{j})_{\alpha}^{i})\right) 
=  \Psi\left(h(\mathcal{J})^{-1}\cdot h(\mathcal{J}_{\alpha}^{i})\right) 
\label{eq: split of the lemma thesis}
\end{equation}
where the first equality 
holds as in the base case, while the second follows from the induction hypothesis, as $\mathcal{I}_{\beta}^{j},\mathcal{I}_{\beta\alpha}^{ji}\in\mathfrak{G}(\mathbf{L})$ and $|\mathcal{I}_{\beta}^{j}\setminus\mathcal{J}|=\kappa$. Analogous expressions hold whenever 
$h(\mathcal{J}_{j}^{\beta})\cdot h(\mathcal{J}_{\alpha j}^{i\beta})\neq0$, by substituting $(\mathcal{I},\mathcal{J})\mapsto(\mathcal{J},\mathcal{I})$ and $(j,\beta)\mapsto(\beta,j)$. Otherwise, the absence of such $j$ and $\beta$ means that 
\begin{equation}
\text{for all } j\in\mathcal{I}\setminus\mathcal{J},\,\beta\in\mathcal{J}\setminus\mathcal{I}:\quad h(\mathcal{I}_{\beta}^{j})\cdot h(\mathcal{I}_{\alpha\beta}^{ij})=h(\mathcal{J}_{j}^{\beta})\cdot h(\mathcal{J}_{\alpha j}^{i\beta})=0.
\label{eq: complementary pattern under (i,a)}
\end{equation} 
We can apply (\ref{eq: symmetric exchange relation}) to both the bases $(\mathcal{I}_{s},\mathcal{J}_{s})$ with $s\in\{1,2\}$, finding that, for each $j\in\mathcal{I}\setminus\mathcal{J}$, there exists an element $\beta_{s}\in\mathcal{J}\setminus\mathcal{I}$ such that $h((\mathcal{I}_{s})_{\beta_{s}}^{j})\cdot h((\mathcal{J}_{s})_{j}^{\beta_{s}})\neq0$. Considering (\ref{eq: complementary pattern under (i,a)}), $h(\mathcal{J}_{\alpha j}^{i\beta_{2}})\neq0$ requires $h(\mathcal{J}_{j}^{\beta_{2}})=0$, and combining these two conditions implies $h(\mathcal{J}_{j}^{i})\neq0$; similarly, $h(\mathcal{I}_{\beta_{1}}^{j})\neq0$ forces $h(\mathcal{I}_{\alpha\beta_{1}}^{ij})=0$ by (\ref{eq: complementary pattern under (i,a)}), which, along with $h(\mathcal{I}_{\alpha}^{i})\neq0$, yields 
$h(\mathcal{I}_{\alpha}^{j})\neq0$. As before, an analogous argument applies starting with any $\beta\in\mathcal{J}\setminus\mathcal{I}$, and we conclude   
\begin{equation} 
\text{for all } j\in\mathcal{I}\setminus\mathcal{J},\,\beta\in\mathcal{J}\setminus\mathcal{I}:\quad \mathcal{I}_{\alpha}^{j},\mathcal{I}_{\beta}^{i},\mathcal{J}_{\alpha}^{\beta},\mathcal{J}_{j}^{i}\in\mathfrak{G}(\mathbf{L}). 
\label{eq: new bases from complementary pattern}
\end{equation} 

Given these premises, we take $\pi\in\mathcal{J}\setminus\mathcal{I}$, seen as an element of $\mathcal{J}_{\alpha}^{i}$, and apply (\ref{eq: symmetric exchange relation}) to identify an index $p\in\mathcal{I}\setminus\mathcal{J}$ such that $h((\mathcal{J}_{\alpha}^{i})_{p}^{\pi})\neq0$. From (\ref{eq: complementary pattern under (i,a)}), we infer 
$h(\mathcal{J}_{p}^{\pi})=0$, while (\ref{eq: new bases from complementary pattern}) yields $h(\mathcal{J}_{\alpha}^{\pi})\cdot h(\mathcal{I}_{\alpha}^{p})\cdot h(\mathcal{J}_{p}^{i})\neq0$; combining these conditions leads to $h(\mathcal{J}_{\alpha p}^{\pi i})\neq0$. So we concentrate on the bases $\mathcal{I}_{\alpha}^{p}$, $(\mathcal{I}_{\alpha}^{p})_{p}^{i}=\mathcal{I}_{\alpha}^{i}$, $\mathcal{J}_{\alpha}^{\pi}$, and $\mathcal{J}_{\alpha p}^{\pi i}$. From $|\mathcal{I}_{\alpha}^{p}\setminus\mathcal{J}_{\alpha}^{\pi}|=\kappa$, we can apply the induction hypothesis to the bases $(\mathcal{I}_{\alpha}^{p},\mathcal{J}_{\alpha}^{\pi})$ and indices $(i,p)$, establishing 
\begin{equation} 
\Psi\left(h(\mathcal{I}_{\alpha}^{i})\right)-\Psi\left(h(\mathcal{I}_{\alpha}^{p})\right) 
= \Psi\left(h(\mathcal{J}_{\alpha p}^{\pi i})\right)-\Psi\left(h(\mathcal{J}_{\alpha}^{\pi})\right)
=  \Psi\left(h(\mathcal{J}_{p}^{i})\right)-\Psi\left(h(\mathcal{J})\right)
\label{eq: pivoting 1 c} 
\end{equation}
where the first equality follows from the induction hypothesis, and the second from the base case. Analogously, applying (\ref{eq: symmetric exchange relation}) to the bases $\mathcal{I}$, $\mathcal{J}$, and the same index $p\in\mathcal{I}\setminus\mathcal{J}$ identified above, we can find $\varrho\in\mathcal{J}\setminus\mathcal{I}$ with $ h(\mathcal{J}_{p}^{\varrho})\neq0$. Now we consider  
the bases $\mathcal{I}_{\alpha}^{p}$, $(\mathcal{I}_{\alpha}^{p})_{p}^{\alpha}=\mathcal{I}$, $\mathcal{J}_{\alpha}^{\varrho}$ (which belongs to $\mathfrak{G}(\mathbf{L})$ by (\ref{eq: new bases from complementary pattern})), and $(\mathcal{J}_{\alpha}^{\varrho})_{p}^{\alpha}=\mathcal{J}_{p}^{\varrho}$. Focusing on the pair $(\alpha,p)$, the induction hypothesis applies to the bases $(\mathcal{I}_{\alpha}^{p},\mathcal{J}_{\alpha}^{\varrho})$ since $|\mathcal{I}_{\alpha}^{p}\setminus\mathcal{J}_{\alpha}^{\varrho}|=\kappa$, and combining it with the base case applied to $(\mathcal{J}_{\alpha}^{\varrho},\mathcal{J}_{\alpha}^{i})$ yields 
\begin{equation} 
\Psi\left(h(\mathcal{I})\right)-\Psi\left(h(\mathcal{I}_{\alpha}^{p})\right) 
= \Psi\left(h(\mathcal{J}_{p}^{\varrho})\right)-\Psi\left(h(\mathcal{J}_{\alpha}^{\varrho})\right) 
= \Psi\left(h(\mathcal{J}_{p}^{i})\right)-\Psi\left(h(\mathcal{J}_{\alpha}^{i})\right).
\label{eq: pivoting 2 c}
\end{equation}
Comparing 
(\ref{eq: pivoting 1 c}) and 
(\ref{eq: pivoting 2 c}), we obtain $\Psi\left(h(\mathcal{I}_{\alpha}^{i})\right)+\Psi\left(h(\mathcal{J})\right)$ $ = \Psi\left(h(\mathcal{I}_{\alpha}^{p})\right)+\Psi\left(h(\mathcal{J}_{p}^{i})\right) $ $= \Psi\left(h(\mathcal{I})\right)+\Psi\left(h(\mathcal{J}_{\alpha}^{i})\right)$. Therefore, $\Psi\left(h(\mathcal{I}_{\alpha}^{i})\right) - \Psi\left(h(\mathcal{I})\right)= \Psi\left(h(\mathcal{J}_{\alpha}^{i})\right)-\Psi\left(h(\mathcal{J})\right)$, which concludes the proof. 
\end{proof}
\begin{thm}
\label{thm: equality phases monomial generalised}
When all the bases in $\mathfrak{G}(\mathbf{L})$ are separable---especially under the hypothesis of Theorem \ref{thm: transfer of separability} and, in particular, when Assumption \ref{claim: existence 3-key} is verified---(\ref{eq: separability of set function}) 
holds.
\end{thm}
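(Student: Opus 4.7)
The proof reduces to exhibiting $\psi:[n]\to\mathbb{Z}^{d}$ and $\mathbf{m}_{0}\in\mathbb{Z}^{d}$ such that $\Psi(\mathcal{I})=\mathbf{m}_{0}+\sum_{\alpha\in\mathcal{I}}\psi(\alpha)$ for every $\mathcal{I}\in\mathfrak{G}(\mathbf{L})$; outside $\mathfrak{G}(\mathbf{L})$ the factor $\Delta_{\mathbf{L}(\mathbf{1})}(\mathcal{I})$ kills the right-hand side of (\ref{eq: integrability of set function}), and the pattern-preserving setup forces the left-hand side to vanish as well. The integrability hypothesis activates Lemma \ref{lem: independence on base-set}, which makes the antisymmetric increment $\psi_{2}(\alpha;\beta):=\Psi(\mathcal{J}_{\beta}^{\alpha})-\Psi(\mathcal{J})$ depend only on the pair $(\alpha,\beta)$ with $\alpha\triangledown\beta$, not on the ambient basis $\mathcal{J}$. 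The task thus becomes to verify that the $1$-cochain $\psi_{2}$ on the graph $G_{\triangledown}$ with vertices $[n]$ and edges $\{\{\alpha,\beta\}:\alpha\triangledown\beta\}$ is a coboundary $\psi_{2}(\alpha;\beta)=\psi(\alpha)-\psi(\beta)$.

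Fix $\mathcal{I}_{0}\in\mathfrak{G}(\mathbf{L})$ and $\beta_{0}\in\mathcal{I}_{0}$; set $\psi(\beta_{0})=0$ and extend $\psi$ by integrating $\psi_{2}$ along a spanning tree $T$ of $G_{\triangledown}$. Connectivity of $G_{\triangledown}$ follows from the non-trivial pattern assumption of Subsection \ref{par: non-trivial dependence pattern}: every element of $[n]$ belongs to a basis, and Lemma \ref{lem: chain of non-vanishing} joins any two bases by a sequence of single-element exchanges, each of which is a $\triangledown$-edge. Well-definedness of $\psi$ reduces to the cocycle condition $\sum_{i=1}^{r}\psi_{2}(\gamma_{i};\gamma_{i-1})=0$ along every closed walk $\gamma_{0}\triangledown\gamma_{1}\triangledown\cdots\triangledown\gamma_{r}=\gamma_{0}$ in $G_{\triangledown}$. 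I expect this cocycle step to be the main obstacle: the strategy is to lift such a walk to a closed chain of bases by selecting $\mathcal{K}_{0}\in\mathfrak{G}(\mathbf{L})$ that contains $\gamma_{0}$ and is disjoint from $\{\gamma_{1},\ldots,\gamma_{r-1}\}$, so that the iterated exchanges $\mathcal{K}_{i}:=\mathcal{K}_{0}\setminus\{\gamma_{0}\}\cup\{\gamma_{i}\}$ all lie in $\mathfrak{G}(\mathbf{L})$ and yield $\mathcal{K}_{r}=\mathcal{K}_{0}$, whence the telescoping identity $\sum_{i}\psi_{2}(\gamma_{i};\gamma_{i-1})=\Psi(\mathcal{K}_{r})-\Psi(\mathcal{K}_{0})=0$. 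When no single $\mathcal{K}_{0}$ can accommodate the entire walk (for instance when $r$ exceeds the corank), one invokes the matroid exchange axioms (\ref{eq: exchange relation})--(\ref{eq: symmetric exchange relation}) to decompose the walk into shorter closed pieces realised by different bases, and patches the pieces using the basis-independence of $\psi_{2}$ granted by Lemma \ref{lem: independence on base-set}.

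The final step is chain-integration. For $\mathcal{I}\in\mathfrak{G}(\mathbf{L})$, Lemma \ref{lem: chain of non-vanishing} provides a chain $\mathcal{I}_{0}=\mathcal{L}_{0},\mathcal{L}_{1},\ldots,\mathcal{L}_{r}=\mathcal{I}$ with $\mathcal{L}_{u}=(\mathcal{L}_{u-1})_{\beta_{u}}^{\alpha_{u}}$, and telescoping combined with the coboundary identity $\psi(\alpha)-\psi(\beta)=\psi_{2}(\alpha;\beta)$ gives $\Psi(\mathcal{I})-\Psi(\mathcal{I}_{0})=\sum_{u=1}^{r}(\psi(\alpha_{u})-\psi(\beta_{u}))$. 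The pairwise cancellation of indices that are first swapped in and subsequently swapped out along the chain collapses this to $\sum_{\alpha\in\mathcal{I}}\psi(\alpha)-\sum_{\beta\in\mathcal{I}_{0}}\psi(\beta)$, and the choice $\mathbf{m}_{0}:=\Psi(\mathcal{I}_{0})-\sum_{\beta\in\mathcal{I}_{0}}\psi(\beta)$ delivers (\ref{eq: integrability of set function}) on $\mathfrak{G}(\mathbf{L})$ and hence on all of $\wp_{k}[n]$ by the trivial case.
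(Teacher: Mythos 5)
Your overall skeleton does track the paper's strategy: use Lemma \ref{lem: independence on base-set} (which is where integrability enters) to make the increment $\psi_{2}$ basis-independent, show that it integrates to a function $\psi$ on $[n]$, and then telescope $\Psi$ along an exchange chain to obtain (\ref{eq: integrability of set function}). The genuine gap is in the step you yourself flag as the main obstacle, the cocycle condition on closed walks, and the mechanism you propose for it does not work. There is no reason a basis $\mathcal{K}_{0}$ containing $\gamma_{0}$ and disjoint from $\{\gamma_{1},\dots,\gamma_{r-1}\}$ should exist, and even if it did, nothing in the matroid axioms (\ref{eq: exchange relation})--(\ref{eq: symmetric exchange relation}) forces the intermediate sets $\mathcal{K}_{i}=\mathcal{K}_{0}\setminus\{\gamma_{0}\}\cup\{\gamma_{i}\}$ to lie in $\mathfrak{G}(\mathbf{L})$: the edge $\gamma_{i-1}\triangledown\gamma_{i}$ only says that \emph{some} basis realises that single exchange, which carries no information about exchanges performed out of the fixed slot of $\mathcal{K}_{0}$. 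The fallback ``decompose the walk into shorter closed pieces and patch'' is exactly the nontrivial content that has to be supplied: in the paper this is an induction on the cycle length in which the base case (four-cycles inside one basis) is precisely the integrability of the corresponding observable sets (via Lemma \ref{lem: case B^(ij)_(ab) square} and Theorem \ref{thm: transfer of integrability}), and the inductive step either chords the cycle or, in the chordless case, uses the Grassmann--Pl\"{u}cker relations to force non-vanishing of the double exchanges (\ref{eq: chain self-correlation}) and then contracts the cycle by a change of basis (\ref{eq: renormalise sub-sequence}), with Lemma \ref{lem: independence on base-set} guaranteeing the contracted sum is unchanged. Your proposal contains no substitute for either the base case (note that your telescoping, if it worked, would never visibly invoke integrability beyond well-definedness of $\psi_{2}$) or the chordless reduction, so the coboundary claim is unproven.

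A secondary error: connectivity of $G_{\triangledown}$ does not follow from the assumptions. Joining two bases by single exchanges (Lemma \ref{lem: chain of non-vanishing}) produces $\triangledown$-edges only between the exchanged pairs, not a connected graph on $[n]$; for instance, for the pattern $(\idd_{k}\mid\idd_{k})$ of Section \ref{sec: counterexamples} the graph splits into $k$ two-element components. This is why the paper works with the equivalence classes of the transitive closure $\bar{\triangledown}_{\mathcal{I}}$ of the single-basis relation (\ref{eq: distinguishability relation on columns}) and chooses an arbitrary value of $\psi$ on one representative per class; your argument can be repaired the same way (spanning forest, one free constant per component, which cancels in the telescoping since every exchanged pair is an edge), but as written the connectivity claim is false and its justification invalid.
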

\begin{proof} 
For each $\mathcal{I}\in\mathfrak{G}(\mathbf{L})$, denote by $\bar{\triangledown}_{\mathcal{I}}$ the \emph{transitive closure} of the relation $\triangledown_{\mathcal{I}}$ introduced in (\ref{eq: distinguishability relation on columns})---namely, $\alpha\bar{\triangledown}_{\mathcal{I}}\omega$ means that there exists a finite sequence $(\delta_{1},\dots,\delta_{\kappa})$
with $\delta_{1}=\alpha$, $\delta_{\kappa}=\omega$, and $\delta_{i}\triangledown_{\mathcal{I}}\delta_{i+1}$
for all $i\in[\kappa-1]$. For each such sequence, we extend (\ref{eq: quantification distinguishable columns}) by introducing  
 \begin{equation}
\bar{\psi}_{2}(\alpha;\omega):=\sum_{i=1}^{\kappa-1}\psi_{2}(\delta_{i};\delta_{i+1}).
\label{eq: extension of exponent differences}
 \end{equation}
Multiple sequences may connect $\alpha$ with $\omega$ and serve to evaluate $\bar{\psi}_{2}(\alpha;\omega)$. Given two such sequences, we can concatenate the first with the \emph{reversal} of the second, which returns from $\omega$ to $\alpha$; the reversed sequence is valid due to the symmetry of relation (\ref{eq: distinguishability relation on columns}), and by Lemma \ref{lem: independence on base-set}, we have 
\begin{equation} 
\text{for all }\mathcal{I},\mathcal{I}_{\alpha}^{i}\in\mathfrak{G}(\mathbf{L}):\quad\psi_{2}(i;\alpha)=\Psi(\mathcal{I}_{\alpha}^{i})-\Psi(\mathcal{I})=\Psi(\mathcal{I}_{\alpha}^{i})-\Psi((\mathcal{I}_{\alpha}^{i})_{i}^{\alpha})=-\psi_{2}(\alpha;i)  
\label{eq: psi2 reversal}
\end{equation} 
so the reversal of a sequence connecting $\alpha$ with $\omega$ changes the sign of each summand in (\ref{eq: extension of exponent differences}). Thus, two distinct sequences return the same value $\bar{\psi}_{2}(\alpha;\omega)$ only if this concatenation, which yields a \emph{closed path} starting and ending with $\alpha$, satisfies $\bar{\psi}_{2}(\alpha;\alpha)=0$. We use this observation to prove that (\ref{eq: extension of exponent differences}) does not depend on the choice of the specific sequence $(\delta_{1},\dots,\delta_{\kappa})$, by focusing on such closed paths. 

Note that $\delta_{u}\in\mathcal{I}$ if and only if $\delta_{u+1}\in\mathcal{I}^{\mathtt{C}}$. Thus, each closed path contains an odd number of indices, including the coinciding endpoints. Let this number be $2\cdot p+1$, with $\delta_{1}=\delta_{2p+1}$. We now prove that $\bar{\psi}_{2}(\alpha;\alpha)=0$, for all bases and closed paths, by induction on $p$. The base case $p=1$ follows from (\ref{eq: psi2 reversal}) by associating each of the sequences $(\delta_{1},\delta_{2})$ and $(\delta_{2},\delta_{1})$ with a distinct pair $(i,\alpha)$ and $(\alpha,i)$. 
Next, we assume the claim holds for all $u\leq p$ and consider any closed path $(\delta_{1},\dots,\delta_{2p+3})$ with $\delta_{1}=\delta_{2p+3}$ and $\delta_{i}\triangledown_{\mathcal{I}}\delta_{i+1}$ for all $i\in[2p+2]$. To simplify notation, we consider indices modulo $2p+2$, i.e., $\delta_{2p+2+l}=\delta_{l}$ for all $l\in[2p+2]$, and, if necessary, perform a cyclic shift of labels to ensure $\delta_{1}\in\mathcal{I}$. If we can find $\delta_{u},\delta_{w}$ such that $1<w-u<2p+1$ and
$\delta_{u}\triangledown_{\mathcal{I}}\delta_{w}$, then by (\ref{eq: psi2 reversal}) we can write 
\begin{equation*}
\bar{\psi}_{2}(\delta_{1};\delta_{1}) 
= \left(\sum_{i=u+1}^{w}\psi_{2}(\delta_{i-1};\delta_{i})+\psi_{2}(\delta_{w};\delta_{u})\right) 
+\left(\psi_{2}(\delta_{u};\delta_{w})+\sum_{j=w+1}^{2p+2+u}\psi_{2}(\delta_{j-1};\delta_{j})\right).
\end{equation*}
Each of the two bracketed sums is over a closed path with length at most $2p$. Therefore, the induction hypothesis applies, and we have $\bar{\psi}_{2}(\delta_{1};\delta_{1})=0$. Conversely, the absence of related pairs $\delta_{u}\triangledown_{\mathcal{I}}\delta_{w}$ with non-adjacent indices---i.e., when $\lvert u-w \rvert>1$---implies, for all $a,b\in\{1,-1\}$ and $M\neq0$ with $(b,M)\neq (-a,a)$ 
\begin{equation} 
h(\mathcal{I}_{\delta_{2u}}^{\delta_{2u+a}})\cdot h(\mathcal{I}_{\delta_{2u+2M}}^{\delta_{2u+2M+b}})\neq0,\quad h(\mathcal{I}_{\delta_{2u+2M}}^{\delta_{2u+a}})\cdot h(\mathcal{I}_{\delta_{2u}}^{\delta_{2u+2M+b}})=0, \quad h(\mathcal{I}_{\delta_{2u}\delta_{2u+2M}}^{\delta_{2u+a}\delta_{2u+2M+b}})\neq0 
\label{eq: chain self-correlation}
\end{equation} 
where the first follows from the condition defining the sequence, the second product vanishes since at least one factor involves two non-adjacent indices, and the third follows from the previous two conditions by the three-term Grassmann-Pl\"{u}cker relations. Fixing $u=1$, the choice $a=1$ in (\ref{eq: chain self-correlation}) yields $h(\mathcal{I}_{\delta_{2}\delta_{2M+2}}^{\delta_{3}\delta_{2M+2+b}})\neq0$ for all $b\in\{-1,1\}$ and $M\notin\{0,1\}$, while taking $M=1$ and $a=b$ gives $h(\mathcal{I}_{\delta_{2}\delta_{4}}^{\delta_{3}\delta_{5}})\neq 0$ (when $a=1$) and $h(\mathcal{I}_{\delta_{2}\delta_{4}}^{\delta_{1}\delta_{3}})\neq 0$ (when $a=-1$). Thus, the sequence $(\delta_{1},\delta_{4},\dots,\delta_{2p+2},\delta_{1})$ is valid for the basis $\mathcal{I}_{\delta_{2}}^{\delta_{3}}$, and, together with Lemma \ref{lem: independence on base-set}, the latter condition gives 
\begin{align}
\psi_{2}(\delta_{1};\delta_{2})+\psi_{2}(\delta_{2};\delta_{3})+\psi_{2}(\delta_{3};\delta_{4}) 
& = \Psi\left(\frac{h(\mathcal{I}_{\delta_{2}}^{\delta_{1}})}{h(\mathcal{I})}\right)+\Psi\left(\frac{h(\mathcal{I})}{h(\mathcal{I}_{\delta_{2}}^{\delta_{3}})}\right)+\Psi\left(\frac{h(\mathcal{I}_{\delta_{4}}^{\delta_{3}})}{h(\mathcal{I})}\right) \nonumber \\ 
& = \Psi\left(\frac{h(\mathcal{I}_{\delta_{2}}^{\delta_{1}})}{h(\mathcal{I}_{\delta_{2}}^{\delta_{3}})}\right)+\Psi\left(\frac{h(\mathcal{I}_{\delta_{4}}^{\delta_{3}})}{h(\mathcal{I})}\right) 
= 
\Psi\left(\frac{h(\mathcal{I}_{\delta_{2}}^{\delta_{1}})}{h(\mathcal{I}_{\delta_{2}}^{\delta_{3}})}\right)+\Psi\left(\frac{h((\mathcal{I}_{\delta_{2}}^{\delta_{1}})_{\delta_{4}}^{\delta_{3}})}{h(\mathcal{I}_{\delta_{2}}^{\delta_{1}})}\right)
\nonumber \\
& = \Psi\left(\frac{h((\mathcal{I}_{\delta_{2}}^{\delta_{3}})_{\delta_{4}}^{\delta_{1}})}{h(\mathcal{I}_{\delta_{2}}^{\delta_{3}})}\right) = \psi_{2}(\delta_{1};\delta_{4}) 
\label{eq: renormalise sub-sequence}
\end{align} 
yielding the same value for $\bar{\psi}_{2}(\delta_{1};\delta_{1})$ as in the basis $\mathcal{I}$. The length of $(\delta_{1},\delta_{4}\dots,\delta_{2p+2},\delta_{1})$ is $2p+1$, so the induction hypothesis applies in $\mathcal{I}_{\delta_{2}}^{\delta_{3}}$, yielding $\bar{\psi}_{2}(\delta_{1};\delta_{1})=0$ and proving the claim. 

Finally, we construct the function $\psi$ in (\ref{eq: separability of set function}). Fix an arbitrary set $\mathcal{I}\in\mathfrak{G}(\mathbf{L})$; the relation $\bar{\triangledown}_{\mathcal{I}}$ is reflexive by (\ref{eq: distinguishability relation on columns}), symmetric due to the validity of reversed sequences, and transitive by construction, so it is an equivalence. Select  
a representative $\overline{i_{c}}$ for each equivalence class $c$ of $\bar{\triangledown}_{\mathcal{I}}$, and assign a $d$-tuple $\psi(\overline{i_{c}})\in\mathbb{Z}^{d}$ to each such index. 
Next, for each $\alpha\in[n]$ belonging to the same class as $\overline{i_{c}}$, define 
 \begin{equation}
\psi(\alpha):=\psi(\overline{i_{c}})+\bar{\psi}_{2}(\overline{i_{c}};\alpha).
\label{eq: set-to-element construction}
 \end{equation}
If $\mathcal{I}_{\alpha}^{i}\in\mathfrak{G}(\mathbf{L})$, then $i$ and $\alpha$ belong to the same class, and so does $\overline{i_{c}}$. Thus, (\ref{eq: extension of exponent differences}) and (\ref{eq: set-to-element construction}) give 
\begin{equation}
\bar{\psi}_{2}(i;\alpha) 
= \bar{\psi}_{2}(i;\overline{i_{c}})+\bar{\psi}_{2}(\overline{i_{c}};\alpha) 
= \psi(\alpha)-\psi(i)\quad \text{for }\mathcal{I}_{\alpha}^{i}\in\mathfrak{G}(\mathbf{L}). 
\label{eq: additivity of evaluation of paths}
\end{equation} 
For any other $\mathcal{J}\in\mathfrak{G}(\mathbf{L})$, choose an ordering $(\alpha_{1},\dots\alpha_{r})$ of $\mathcal{J}\setminus\mathcal{I}$. We construct a sequence of bases by setting $\mathcal{T}_{0}:=\mathcal{J}$ and $\mathcal{T}_{u}:=(\mathcal{T}_{u-1})_{i_{u}}^{\alpha_{u}}$ for $u\in[r]$, where each $i_{u}$ satisfies $(\mathcal{T}_{u-1})_{i_{u}}^{\alpha_{u}},\mathcal{I}_{\alpha_{u}}^{i_{u}}\in\mathfrak{G}(\mathbf{L})$ as per (\ref{eq: symmetric exchange relation}): such $i_{u}$ is distinct from any $\alpha_{s}$ since it belongs to $\mathcal{I}$, and for any subsequent step $w>u$, we have $i_{u}\in\mathcal{T}_{w-1}$ and $i_{w}\notin\mathcal{T}_{w-1}$, so $i_{u}$ is distinct from all later $i_{w}$. We conclude 
\begin{align}
\Psi(\mathcal{J}) 
= \Psi(\mathcal{I})+\sum_{u=0}^{r-1}\Psi(\mathcal{T}_{u})-\Psi((\mathcal{T}_{u})_{i_{u+1}}^{\alpha_{u+1}}) 
& 
= \Psi(\mathcal{I})+\sum_{u=0}^{r-1}\Psi(\mathcal{I}_{\alpha_{u+1}}^{i_{u+1}})-\Psi(\mathcal{I})\nonumber && \text{(by Lemma \ref{lem: independence on base-set})}\\
& 
= \Psi(\mathcal{I})+\sum_{u=1}^{r}\bar{\psi}_{2}(i_{u};\alpha_{u}) && \text{(by (\ref{eq: quantification distinguishable columns}))} \nonumber \\ 
& = \Psi(\mathcal{I})+\sum_{u=1}^{r}\psi(\alpha_{u})-\psi(i_{u}) && \text{(by (\ref{eq: additivity of evaluation of paths}))}.
\label{eq: construct J from I}
\end{align}
Hence, (\ref{eq: separability of set function}) follows by setting 
$\mathbf{m_{0}}:=\Psi(\mathcal{I})-\sum_{i\in\mathcal{I}\setminus\mathcal{J}}\psi(i) -\sum_{j\in\mathcal{I}\cap\mathcal{J}}\psi(j) = \Psi(\mathcal{I})-\sum_{i\in\mathcal{I}}\psi(i)$. 
\end{proof}


\begin{rem}
\label{rem: constant Y-terms}
It is straightforward to verify that (\ref{eq: separable deformations}) produces constant $Y$-terms. Conversely, when all $Y$-terms are constant, each basis in $\mathfrak{G}(\mathbf{L})$ is separable by Lemma \ref{lem: case B^(ij)_(ab) square}. Lemma \ref{lem: independence on base-set} and the proof of Theorem \ref{thm: equality phases monomial generalised} then apply, yielding the form (\ref{eq: separable deformations}), which characterises these configurations as those where the $Y$-terms remain invariant under the weighting (\ref{eq: toric deformation}). 
\end{rem}

\section{Counterexamples}
\label{sec: counterexamples} 

When the conditions underlying the results in Sections \ref{sec: principal minors and hyperdeterminants}--\ref{sec: propagation of separability between different bases} are not satisfied, separability may fail. In what follows, we construct counterexamples by relaxing the individual conditions that define non-planar keys. A Mathematica notebook \citep{Mathematica} is available, containing additional verifications and details on these constructions. Throughout this section, subscripts are used, where appropriate, to explicitly indicate the dimensions of block matrices. 

\subsection{Reduction to Principal Minors}
\label{subsubsec: Reduction to Principal Minor Assignment}

We begin with an example where $\mathfrak{G}(\mathbf{L})$ provides minimal structural information. Consider 
a generic skew-symmetric constant matrix $\mathbf{S}\in\mathbb{C}^{k\times k}$ and define  
\begin{equation}
\mathbf{L}_{0}:=(\idd_{k}\mid\idd_{k})\in\mathbb{C}^{k\times(2k)},\quad 
\mathbf{r}_{0}(\tau) := \tau\cdot\mathbf{1}_{k}\cdot\mathbf{1}_{k}^{\mathtt{T}}+\mathbf{S},\quad \mathbf{R}_{0}(\tau) := (\idd_{k}\mid\mathbf{r}_{0})^{\mathtt{T}}
\label{eq: special non-separable control matrix}
\end{equation}
where $\tau$ is a non-constant monomial and $\mathbf{1}_{k}:=(1,\dots,1)\in\mathbb{C}^{k}$. As we now illustrate, these matrices satisfy (\ref{eq: monomial terms of Cauchy-Binet expansion}), but $\mathfrak{G}(\mathbf{L}_{0})$ does not satisfy Assumption \ref{claim: existence 3-key}. In particular, we state the following: 
\begin{prop}
\label{prop: principal minors odd non-separable}
For a configuration defined by 
(\ref{eq: special non-separable control matrix}), where $\mathbf{S}$ is a generic $(k\times k)$ skew-symmetric matrix, the following holds: 
\begin{equation}
    \text{for all }\mathcal{I}\in\mathfrak{G}(\mathbf{L}_{0}): \quad \Delta_{\mathbf{R}_{0}(\tau)}(\mathcal{I})\text{ is constant}  \Leftrightarrow 
    \left|[k]\setminus\mathcal{I}\right| \text{ is even}.
    \label{eq: alternating constantness}
\end{equation}
This configuration, whose observable terms arise from the principal minors of $\mathbf{r}_{0}(\tau)$ by Remark \ref{rem: transmission weak key}, is not separable. 
\end{prop}
\begin{proof}
Set $R(\mathcal{I}):=[k]\setminus\mathcal{I}$ and $C(\mathcal{I}):=\mathcal{I}\setminus[k]$, with $|R(\mathcal{I})|=:r$. Then, the minor $\Delta_{\mathbf{R}_{0}(\tau)}(\mathcal{I})$ coincides, up to a permutation sign depending only on $\mathcal{I}$, with $\det\left(\mathbf{r}_{0}(\tau)_{C(\mathcal{I})}^{R(\mathcal{I})}\right)$, where $\mathbf{r}_{0}(\tau)_{\mathcal{B}}^{\mathcal{A}}$ denotes the submatrix of $\mathbf{R}_{0}(\tau)$ (and $\mathbf{r}_{0}(\tau)$) with rows and columns indexed by 
$\mathcal{A}\subseteq [k]$ and $\mathcal{B}\subseteq [k]^{\mathtt{C}}$, respectively. 
Whenever $r$ is even, the matrix determinant lemma \citep[Eq. (0.8.5.11)
]{Horn2012} yields 
\begin{equation}
\Delta_{\mathbf{R}_{0}(\tau)}(\mathcal{I})=\left(1+\tau\cdot\mathbf{1}_{r}^{\mathtt{T}}\cdot\left(\mathbf{r}_{0}(0)_{C(\mathcal{I})}^{R(\mathcal{I})}\right)^{-1}\cdot\mathbf{1}_{r}\right)\cdot\Delta_{\mathbf{R}_{0}(0)}(\mathcal{I})=\Delta_{\mathbf{R}_{0}(0)}(\mathcal{I})\in\mathbb{C},\quad \mathcal{I}\in\mathfrak{G}(\mathbf{L}_{0})
\label{eq: matrix determinant lemma}
\end{equation} 
since a skew-symmetric matrix $\left(\mathbf{r}_{0}(0)_{C(\mathcal{I})}^{R(\mathcal{I})}\right)^{-1}$ satisfies $\mathbf{v}^{\mathtt{T}}\cdot\left(\mathbf{r}_{0}(0)_{C(\mathcal{I})}^{R(\mathcal{I})}\right)^{-1}\cdot\mathbf{v}=0$ for all $\mathbf{v}\in\mathbb{C}^{r}$, in particular when $\mathbf{v}=\mathbf{1}_{r}$. Note that the same permutation sign results for $\Delta_{\mathbf{R}_{0}(\tau)}(\mathcal{I})$ and $\Delta_{\mathbf{R}_{0}(0)}(\mathcal{I})$, so it has no effect on (\ref{eq: matrix determinant lemma}). 
For sets $\mathcal{I}\in\mathfrak{G}(\mathbf{L}_{0})$ such that $r$ is odd, choose any $\alpha\in C(\mathcal{I})$, recall the mapping $\varrho$ from Remark \ref{rem: transmission weak key}, and apply the Schur formula \citep[Eq. (0.8.5.1)]{Horn2012} 
\begin{equation} 
\det\left(\mathbf{r}_{0}(\tau)_{C(\mathcal{I})}^{R(\mathcal{I})}\right)
=\det(\mathbf{r}_{(\alpha)}(\tau))\cdot \det\left(\mathbf{r}_{0}(\tau)_{\{\alpha\}}^{\{\varrho(\alpha)\}}\right)
\label{eq: Schur formula}
\end{equation} 
where $\mathbf{r}_{(\alpha)}(\tau)$ denotes the Schur complement \citep[Eq. (0.8.5.2)]{Horn2012} of the invertible submatrix $\mathbf{r}_{0}(\tau)_{\{\alpha\}}^{\{\varrho(\alpha)\}}=(\tau)$ in $\mathbf{r}_{0}(\tau)_{C(\mathcal{I})}^{R(\mathcal{I})}$. 
Based on (\ref{eq: special non-separable control matrix}), and defining $\mathbf{s}:=\mathbf{r}_{0}(0)_{\{\alpha\}}^{R(\mathcal{I})\setminus\{\varrho(\alpha)\}}$, we directly obtain the following expression for the Schur complement: 
\begin{equation} 
\mathbf{r}_{(\alpha)}(\tau)=
\mathbf{r}_{0}(0)_{C(\mathcal{I})\setminus\{\alpha\}}^{R(\mathcal{I})\setminus\{\varrho(\alpha)\}}+(\mathbf{1}_{r-1}\cdot\mathbf{s}^{\mathtt{T}}-\mathbf{s}\cdot\mathbf{1}_{r-1}^{\mathtt{T}})+\tau^{-1}\cdot\mathbf{s}\cdot\mathbf{s}^{\mathtt{T}}.
\label{eq: Schur complement, b}
\end{equation}
The matrix $\mathbf{r}_{0}(0)_{C(\mathcal{I})\setminus\{\alpha\}}^{R(\mathcal{I})\setminus\{\varrho(\alpha)\}}+(\mathbf{1}_{r-1}\cdot\mathbf{s}^{\mathtt{T}}-\mathbf{s}\cdot\mathbf{1}_{r-1}^{\mathtt{T}})$ is constant, skew-symmetric, and even-dimensional, so we can invoke the matrix determinant lemma again and find $\det(\mathbf{r}_{(\alpha)}(\tau))\in\mathbb{C}$. Since $\det\left(\mathbf{r}_{0}(\tau)_{\{\alpha\}}^{\{\varrho(\alpha)\}}\right)=\tau$, we conclude that both sides of (\ref{eq: Schur formula}), and hence $\Delta_{\mathbf{R}_{0}(\tau)}(\mathcal{I})$, are non-constant. 

We observe that non-principal minors of $\mathbf{r}_{0}(\tau)$ also do not vanish for a generic choice of $\tau$ and $\mathbf{S}$---specifically, in the absence of linear dependencies among the free parameters in $\mathbf{S}$. Indeed, for any vanishing minor $\det(\mathbf{r}_{0}(\tau)_{C(\mathcal{I})}^{R(\mathcal{I})})=0$ of minimal order $r$ among those that violate Assumption \ref{claim: no Y=-1}, its Laplace expansion along the $\alpha$-th column of $\mathbf{r}_{0}(\tau)_{C(\mathcal{I})}^{R(\mathcal{I})}$, for any $\alpha \in C(\mathcal{I})$ such that $\varrho(\alpha) \notin R(\mathcal{I})$,  
would entail a non-trivial linear relation among the column's entries, contradicting the genericity of $\mathbf{S}$. 

We conclude by considering $\alpha\neq \beta$ in $[k]^{\mathtt{C}}$, so that $h([k])\cdot h([k]_{\alpha\beta}^{\varrho(\alpha)\varrho(\beta)})\in\mathbb{C}$ by (\ref{eq: matrix determinant lemma}), whereas $h([k]_{\alpha}^{\varrho(\alpha)})\cdot h([k]_{\beta}^{\varrho(\beta)})\notin\mathbb{C}$. Thus, $Y([k])_{\beta\alpha}^{\varrho(\alpha)\varrho(\beta)}\notin\mathbb{C}$ by (\ref{eq: special rational, binomial}), at odds with (\ref{eq: separability of set function}) by Lemma \ref{lem: case B^(ij)_(ab) square}. 
\end{proof}

We use this counterexample to illustrate the incompatibility of orderings on $[n]$ and $\mathfrak{G}(\mathbf{L})$ arising in non-separable configurations, as discussed in Section \ref{subsec: counterexamples and relations to choice theory}. Taking $\mathcal{I}:=[k]\in\mathfrak{G}(\mathbf{L}_{0})$, for any $\alpha\in\mathcal{I}^{\mathtt{C}}$ we find weight ratios defined via the function $\Psi_{0}$, derived as in (\ref{eq: monomial terms of Cauchy-Binet expansion}) from $(\mathbf{L}_{0},\mathbf{R}_{0})$ in (\ref{eq: special non-separable control matrix}): 
\begin{equation} 
\left\Vert\frac{\mathbf{t}^{\Psi_{0}(\mathcal{I}_{\alpha}^{\varrho(\alpha)})}}{\mathbf{t}^{\Psi_{0}(\mathcal{I})}}\right\Vert 
= \left\Vert \frac{\Delta_{\mathbf{R}_{0}(\mathbf{t})}(\mathcal{I}_{\alpha}^{\varrho(\alpha)})\cdot \Delta_{\mathbf{R}_{0}(\mathbf{1})}(\mathcal{I}_{\alpha}^{\varrho(\alpha)})^{-1}}{\Delta_{\mathbf{R}_{0}(\mathbf{t})}(\mathcal{I})\cdot \Delta_{\mathbf{R}_{0}(\mathbf{1})}(\mathcal{I})^{-1}}\right\Vert \\ 
= \Vert\tau\Vert 
\label{eq: contextuality, basis 1}
\end{equation}
as follows from the proof of Proposition \ref{prop: principal minors odd non-separable}. Conversely, for any $\beta\in\mathcal{I}^{\mathtt{C}}$ with $\beta\neq\alpha$, equations (\ref{eq: matrix determinant lemma}) and (\ref{eq: Schur formula}) imply that the basis $\mathcal{J}:=\mathcal{I}_{\beta}^{\varrho(\beta)}$ yields 
\begin{equation}
\left\Vert \frac{\mathbf{t}^{\Psi_{0}(\mathcal{J}_{\alpha}^{\varrho(\alpha)})}}{\mathbf{t}^{\Psi_{0}(\mathcal{J})}}\right\Vert 
= \left\Vert \frac{\Delta_{\mathbf{R}_{0}(\mathbf{t})}(\mathcal{J}_{\alpha}^{\varrho(\alpha)})\cdot\Delta_{\mathbf{R}_{0}(\mathbf{1})}(\mathcal{J}_{\alpha}^{\varrho(\alpha)})^{-1}}{\Delta_{\mathbf{R}_{0}(\mathbf{t})}(\mathcal{J})\cdot\Delta_{\mathbf{R}_{0}(\mathbf{1})}(\mathcal{J})^{-1}}\right\Vert 
= \Vert\tau\Vert^{-1} 
\label{eq: contextuality, basis 2}
\end{equation} 
since $|\mathcal{J}\setminus\mathcal{I}|$ is odd while $|\mathcal{J}_{\alpha}^{\varrho(\alpha)}\setminus\mathcal{I}|$ is even. Therefore, for any evaluation $\mathbf{t}_{0}$ yielding a value $\tau_{0}$ with $\Vert \tau_{0} \Vert > 1$, exchanging $\alpha$ with $\varrho(\alpha)$ results in a new basis $\mathcal{I}_{\alpha}^{\varrho(\alpha)}$ with greater weight norm, $\Vert \mathbf{t}_{0}^{\Psi_{0}(\mathcal{I}_{\alpha}^{\varrho(\alpha)})} \Vert  > \Vert  \mathbf{t}_{0}^{\Psi_{0}(\mathcal{I})} \Vert$, as shown in (\ref{eq: contextuality, basis 1}); however, the same exchange within the basis $\mathcal{J}$ reverses the ordering, as it decreases the weight norm $\Vert \mathbf{t}_{0}^{\Psi_{0}(\mathcal{J}_{\alpha}^{\varrho(\alpha)})} \Vert < \Vert \mathbf{t}_{0}^{\Psi_{0}(\mathcal{J})} \Vert$ by (\ref{eq: contextuality, basis 2}). Hence, the comparison between components $\varrho(\alpha)$ and $\alpha$ is not entirely determined by their pairwise relation alone but depends on the other elements composing the basis used for the comparison; this form of contextuality clearly does not occur for separable configurations (\ref{eq: separability of set function}) and reflects the non-additive behaviour of $\Psi_{0}$. 

This inconsistency implies that comparisons derived from such weight norms are incompatible with any dimension-specific analogue. More precisely, given an evaluation $\mathbf{t}_{0}$ of $\mathbf{t}$ and an injective function $\psi:\,[n]\longrightarrow\mathbb{Z}^{d}$, define $m := \underset{\alpha\in[n]}{\text{arg min}}\Vert\mathbf{t}_{0}^{\psi(\alpha)}\Vert$ and $M := \underset{\alpha\in[n]}{\text{arg max}}\Vert\mathbf{t}_{0}^{\psi(\alpha)}\Vert$, and set $\tau_{0}:=\mathbf{t}_{0}^{\psi(M)-\psi(m)}$. Then, considering $\mathrm{diag}(\mathbf{t}_{0}^{\psi(1)},\dots,\mathbf{t}_{0}^{\psi(n)})\cdot\mathbf{R}_{0}(\tau_{0})$ in place of $\mathbf{R}_{0}(\tau_{0})$ and proceeding as above, the weight ratios in (\ref{eq: contextuality, basis 1}) and (\ref{eq: contextuality, basis 2}) become $\Vert\mathbf{t}_{0}^{\psi(M)-\psi(m)+\psi(\alpha)-\psi(\varrho(\alpha))}\Vert$ and $\Vert\mathbf{t}_{0}^{\psi(m)-\psi(M)+\psi(\alpha)-\psi(\varrho(\alpha))}\Vert$, respectively. By construction, only one of these ratios induces the same ordering as $\Vert\mathbf{t}_{0}^{\psi(\cdot)}\Vert$, which is thus incompatible with the order resulting from $\Vert \mathbf{t}_{0}^{\Psi_{0}(\cdot)} \Vert$ under the IIA axiom as specified in Section \ref{subsec: counterexamples and relations to choice theory}. 

\subsection{Multiple Weak Keys}
\label{subsubsec: Multiple weak keys}
Several weak keys, none of which is a key, do not suffice to guarantee separability. Based on the proofs in the previous sections, we construct a counterexample with 
\begin{align} 
\mathbf{L}_{\mathrm{w}} & :=
\begin{pmatrix}
\idd_{k} \quad \vline & 
\begin{array}{cc}
\mathbf{1}_{2\times (p+1)} & \mathbf{0}_{2\times (k-2)}\\
\mathbf{0}_{(k-2)\times (p+1)} & \idd_{(k-2)\times (k-2)}
\end{array}
\end{pmatrix}, 
\nonumber \\  
\mathbf{R}_{\mathrm{w}}(\xi) & :=
\begin{pmatrix}
\idd_{k} \quad \vline & 
\begin{array}{ccc}
1 & \mathbf{1}_{1\times p} & -\mathbf{1}_{1\times(k-2)}\\
\xi & \mathbf{d}_{1\times p} & \mathbf{a}_{1\times(k-2)}\\
\mathbf{r}_{(k-2)\times1} & \mathbf{C}_{(k-2)\times p} & \mathbf{S}_{(k-2)\times(k-2)}
\end{array}
\end{pmatrix}^{\mathtt{T}}
\label{eq: counterexample all weak keys}
\end{align}
where $\mathbf{1}_{e\times f}$ and $\mathbf{0}_{e\times f}$ denote the $(e\times f)$-block matrices with all entries equal to $1$ and $0$, respectively; $\xi$ is a non-constant monomial; $\mathbf{d}\in\mathbb{C}^{1\times p}$, $\mathbf{a}\in\mathbb{C}^{1\times(k-2)}$, and $\mathbf{C}\in\mathbb{C}^{(k-2)\times p}$ are generic; $r_{s}:=\xi\cdot a_{s}^{-1}-1$ for all $s\in[k-2]$; and $\mathbf{S}\in\mathbb{C}^{(k-2)\times(k-2)}$ satisfies $S_{s,u}=a_{s}^{-1}\cdot a_{u}\cdot(1-S_{u,s})+1$ and $S_{s,s}=1$ for all $s,u\in[k-2]$. 
Hence, we obtain $n=2\cdot k+p-1$. 
For $k=5$ and $p=4$, the matrix $\mathbf{R}_{\mathrm{w}}(\xi)$ 
takes the form 
\begin{equation*}
    \left(
\begin{smallmatrix}
 1 & 0 & 0 & 0 & 0 & 1 & 1 & 1 & 1 & 1 & -1 & -1 & -1 \\
 0 & 1 & 0 & 0 & 0 & \xi  & d_{1} & d_{2} & d_{3} & d_{4} & a_{1} & a_{2} & a_{3} \\
 0 & 0 & 1 & 0 & 0 & \frac{\xi }{a_{1}}-1 & C_{1,1} & C_{1,2} & C_{1,3} & C_{1,4} & 1 & \frac{a_{2}\cdot \left(1-S_{2,1}\right)}{a_{1}}+1 & \frac{a_{3}\cdot \left(1-S_{3,1}\right)}{a_{1}}+1 \\
 0 & 0 & 0 & 1 & 0 & \frac{\xi }{a_{2}}-1 & C_{2,1} & C_{2,2} & C_{2,3} & C_{2,4} & S_{2,1} & 1 & \frac{a_{3}\cdot \left(1-S_{3,2}\right)}{a_{2}}+1 \\
 0 & 0 & 0 & 0 & 1 & \frac{\xi }{a_{3}}-1 & C_{3,1} & C_{3,2} & C_{3,3} & C_{3,4} & S_{3,1} & S_{3,2} & 1 \\ 
\end{smallmatrix}
\right)^{\mathtt{T}} 
\end{equation*} 
where we can verify that all the maximal minors 
are non-zero for generic choices of the constants parametrising the matrix, while $\Delta_{\mathbf{R}_{\mathrm{w}}(\xi)}(\mathcal{I})$, for $\mathcal{I}\in\mathfrak{G}(\mathbf{L}_{\mathrm{w}})$, is a monomial. Some $Y$-terms are non-constant; specifically, $Y([k])_{k+1\,k+2}^{1\,2}=-d_{1}\cdot \xi^{-1}\notin\mathbb{C}$, which conflicts with separability by Lemma~\ref{lem: case B^(ij)_(ab) square}. 


\subsection{Planar Key}
\label{subsubsec: Unexplainable key}

Finally, we present a counterexample where a violation of (\ref{eq: separability of set function}) and separability can emerge due to the planarity of keys. In this configuration, a structural matrix $\mathbf{L_{p}}$ exhibits a key $\mathfrak{c}=\{i_{1},i_{2}\}\times\{\alpha_{1},\alpha_{2}\}\subseteq\mathcal{I}\times\mathcal{I}^{\mathtt{C}}$, with $\mathcal{I}\in\mathfrak{G}(\mathbf{L_{p}})$, such that for all $\omega\in\mathcal{I}^{\mathtt{C}}\setminus\mathfrak{c}_{c}$, there exists a unique $\varrho(\omega)\in\mathcal{I}$ for which $h(\mathcal{I}_{\omega}^{\varrho(\omega)})\neq0$. We take a non-constant monomial $\zeta$ to obtain $Y_{\omega\alpha_{1}}^{i_{1}\varrho(\omega)} = c_{\omega}^{2}\cdot\zeta-1$, where $c_{\omega}\in\mathbb{C}\setminus\{0\}$, for all $\omega\in(\mathcal{I}_{\alpha_{1}\alpha_{2}})^{\mathtt{C}}$. Such terms arise from the following non-separable configuration: 
\begin{align} 
\mathbf{L_{\mathrm{p}}} & := 
\begin{pmatrix}
\idd_{k} \quad \vline & 
\begin{array}{ccc}
1 & 1 & \mathbf{0}_{1\times (k-2)}\\ 
1 & -1 & \mathbf{0}_{1\times (k-2)}\\ 
\mathbf{0}_{(k-2)\times 1} & \mathbf{0}_{(k-2)\times 1} & \idd_{(k-2)\times (k-2)} 
\end{array}
\end{pmatrix}, 
\nonumber \\  
\mathbf{R_{p}}(\zeta) & := \begin{pmatrix}-\zeta^{-1} & \zeta^{-1} & \mathbf{0}_{1\times(k-2)} & 0 & -1 & \mathbf{c^{(r)}}_{1\times(k-2)}\\
\vartheta & 1 & \mathbf{0}_{1\times(k-2)} & 1 & 0 & \mathbf{1}_{1\times(k-2)}\\
\mathbf{0}_{(k-2)\times1} & \mathbf{0}_{(k-2)\times 1} & \idd_{k-2} & \mathbf{1}_{(k-2)\times1} & \mathbf{c^{(c)}}_{(k-2)\times 1} & \mathbf{Z}_{(k-2)\times(k-2)} 
\end{pmatrix}^{\mathtt{T}}
\label{eq: counterexample planar key}
\end{align}
with $\vartheta=Y_{\alpha_{1}\alpha_{2}}^{i_{1}i_{2}}\in\mathbb{C}$ as in the proof of Proposition \ref{prop: constant strip and principal minors}, and the entries 
\begin{equation*} 
c_{\omega}^{(r)} = -\frac{1}{c_{\varrho(\omega)}^{(c)}}:=c_{\omega}^{2}, \quad 
Z_{\varrho(\nu),\varrho(\omega)} :=  1+\mathrm{sign}(\omega-\nu)\cdot\frac{c_{\omega}\cdot \mu_{\{\nu,\omega\}}}{c_{\nu}},\quad \nu,\omega\in(\mathcal{I}_{\alpha_{1}\alpha_{2}})^{\mathtt{C}}  
\end{equation*} 
are expressed in terms of constants $c_{\omega}$ and $\mu_{\{\nu,\omega\}}$, where $\mu_{\{\nu,\omega\}}^{2}=\mathfrak{m}_{\alpha_{1}\nu\omega}^{i_{1}\varrho(\nu)\varrho(\omega)}$ introduced in (\ref{eq: three Y-term}). Taking $k=7$ as a concrete instance, similarly to the previous example, we find that (\ref{eq: counterexample planar key}) satisfies (\ref{eq: monomial terms of Cauchy-Binet expansion}) and Assumption \ref{claim: no Y=-1}, but $Y([k])_{\omega\alpha_{1}}^{i_{1}\varrho(\omega)}=c_{\omega}^{2}\cdot\zeta -1 \notin\mathbb{C}$, which is incompatible with separability. 

\section{Concluding Remarks}
\label{sec: conclusion}

This work proposes a novel framework for analysing multidimensional representations of factor models, explicitly capturing dependence patterns via determinantal characterisations. By recasting the factor pairing problem within a Grassmannian formulation, we sharpen geometric conditions that support the methodological treatment of uncertainty-related conceptual issues within instrumental variable methods and factor score indeterminacy. This approach enables a sensitivity analysis based on an algebraic parameterisation, aimed at assessing whether component-wise contributions can be disentangled across structurally observable subsets of dimensions, as expressed by separability of variation weights. 

These findings yield two key implications. First, they facilitate the definition of invariants for studying the intrinsic uncertainty in factor decompositions. In particular, such a representation makes order-theoretic ambiguity manifest in terms of context-dependent comparisons of dimensional contributions, paving the way for more general approaches to encode uncertainty in multidimensional modelling. Second, the framework leads to new graphical descriptions associated with the model, enhancing the interpretability of separability conditions as outlined in Remark \ref{rem: analogy with Kuratowski}.

While our analysis focuses on a specific parameterisation, future work may explore alternative weighting schemes suited to particular matrix decomposition problems, along with the corresponding graphical characterisations. Extending the study beyond a single structural factor, as noted in Remark \ref{rem: self-compatibility score matrices}, subspace pairing can also be specified to connect different models---via their score solutions and solution differences---offering an informative view on their alignment and comparison. This perspective includes deepening dual representations, substructures related to submodels or other notions of observability \citep{krijnen1998conditions}, and consistency conditions under adding variables \citep{mulaik1978effect}. 

Aside from the main scope of this work, our formulation has revealed potential connections to the geometry of uncertainty related to evidence theory \citep{Cuzzolin2020}, particularly regarding relative beliefs of singletons \citep{Cuzzolin2010}. Finally, as anticipated in earlier sections, broader applications of the expansion (\ref{eq: Cauchy-Binet expansion}) may benefit the investigation of principal angles, canonical correlations, or cross-correlations between given sets of variables and scores, offering refined tools for uncertainty quantification in psychological measurement. 


\section{Proofs of Auxiliary Results} 
\label{app: Proofs of auxiliary results} 

\subsection{Proof of Proposition \ref{prop: constant strip and principal minors}} 
\label{app: constant strip and principal minors} 

\begin{lem}
\label{lem: one configuration from 2-key} 
For each weak key $\mathfrak{c}:=\{i,j\}\times\{\alpha,\beta\}$ satisfying (\ref{eq: local context for representation of 4-square}) and for any $(m,\omega)\in\mathcal{N}(\mathfrak{c})$ with $h(\mathcal{I}_{\omega}^{m})\neq0$, there exist at most two allowed polynomial configurations for the set $\left\{Y_{\alpha\omega}^{mi},Y_{\beta\omega}^{mi},Y_{\alpha\omega}^{mj},Y_{\beta\omega}^{mj}\right\}$, provided that it contains at least one non-constant term. 
\end{lem}
\begin{proof}
We define 
\begin{equation}
\Upsilon_{\omega}^{+}:=\left\{ Y_{\alpha\omega}^{mi},Y_{\beta\omega}^{mj}\right\} ,\quad\Upsilon_{\omega}^{-}:=\left\{ Y_{\alpha\omega}^{mj},Y_{\beta\omega}^{mi}\right\} ,\quad\Upsilon_{\omega}:=\Upsilon_{\omega}^{+}\cup\Upsilon_{\omega}^{-}
\label{eq: directed pairs}
\end{equation}
and introduce the notation $\Upsilon_{\omega}^{\sigma}=:\{Y_{\sigma,1},Y_{\sigma,2}\}$ for each $\sigma\in\{+,-\}$. From 
(\ref{eq: quadrilateral decomposition}), we obtain 
 \begin{equation}
Y_{\alpha\beta}^{ij}\cdot\left(Y_{\alpha\omega}^{mi}\cdot Y_{\beta\omega}^{mj}\right)=-\left(Y_{\alpha\omega}^{mj}\cdot Y_{\beta\omega}^{mi}\right).
\label{eq: monomial scaling of product of binomials}
 \end{equation}
By applying Lemma \ref{lem: algebraic forms Y-terms} and the assumptions that $(m,\omega)\in\mathcal{N}(\mathfrak{c})$, $h(\mathcal{I}_{\omega}^{m})\neq 0$, and $\mathfrak{c}$ satisfies (\ref{eq: local context for representation of 4-square}), we deduce that each term in $\Upsilon_{\omega}$ takes the form (\ref{eq: special rational, binomial}). Furthermore, since 
(\ref{eq: local context for representation of 4-square}) holds for $\mathfrak{c}$ and $Y_{\alpha\beta}^{ij}\in\mathbb{F}$ by (\ref{eq: monomial scaling of product of binomials}), either (\ref{eq: special rational, monomial}) or Remark \ref{rem: algebraic key yields constant} applies, ensuring that $Y_{\alpha\beta}^{ij}=:\vartheta$ is a non-zero (possibly constant) monomial. 

In the case $\Upsilon_{\omega}^{\sigma}\cap\mathbb{C}=\emptyset$, the factors of $Y_{\sigma,1}\cdot Y_{\sigma,2}$ uniquely determine 
the set $\left\{\Psi^{(1)}(Y_{\sigma,1}+1),\right.$ $\left.\Psi^{(1)}(Y_{\sigma,2}+1)\right\}$, where $\Psi^{(1)}(\cdot)$ is defined in Remark \ref{rem: notation for Psi}. Starting from (\ref{eq: monomial scaling of product of binomials}) and considering Assumption \ref{claim: no Y=-1},  
when $\Upsilon_{\omega}\cap\mathbb{C}=\emptyset$ we infer 
 \begin{equation}
\{\Psi^{(1)}(Y_{+,1}+1),\Psi^{(1)}(Y_{+,2}+1)\}=\{-\Psi^{(1)}(Y_{-,1}+1),-\Psi^{(1)}(Y_{-,2}+1)\} 
\label{eq: even number of constant Y-terms}
 \end{equation}
which also holds in the general case where $|\Upsilon_{\omega}\cap\mathbb{C}|$ is even. This follows trivially when $\Upsilon_{\omega}\subset\mathbb{C}$ and from Assumption \ref{claim: no Y=-1} when $|\Upsilon_{\omega}\cap\mathbb{C}|=2$. In particular, when $\Upsilon_{\omega}\nsubseteq\mathbb{C}$, there exists a non-zero monomial $\tau\in\mathbb{C}[\mathbf{t},\mathbf{t}^{-1}]$ such that 
 \begin{equation}
\Upsilon_{\omega}^{+}=\left\{\tau^{-1}-1,\tau\vartheta^{-1}\cdot\Omega\right\},\quad\Upsilon_{\omega}^{-}=\left\{\tau-1,\Omega\right\},\quad \Omega\in\{-\vartheta\tau^{-1}-1\}\cup\mathbb{C}.
\label{eq: binomial configuration from 5 Y terms, even-typed}
 \end{equation} 
Moreover, since $\Upsilon_{\omega}\nsubseteq\mathbb{C}$, at least two $Y$-terms in (\ref{eq: monomial scaling of product of binomials}) must not be monomials in $\mathbb{C}[\mathbf{t},\mathbf{t}^{-1}]$. Thus, when $|\Upsilon_{\omega}\cap\mathbb{C}|$ is odd, we must have $|\Upsilon_{\omega}\cap\mathbb{C}|=1$; specifically, there exist a permutation $\sigma$ of $\{+,-\}$ and $Y\in\Upsilon_{\omega}^{\sigma(+)}$ such that each element of $\Upsilon_{\omega}^{\sigma(-)}$ is a polynomial dividing $Y$. This condition allows us to express $Y_{\alpha\beta}^{ij}$ as $\vartheta = \varepsilon_{2}\cdot C\cdot\tau^{(1+\varepsilon_{1}-2\varepsilon_{2})/2}$ for suitable choices of $\varepsilon_{1},\varepsilon_{2}\in\{1,-1\}$ and a non-zero constant $C\in\mathbb{C}$. This leads to the second configuration:  
\begin{equation} 
\Upsilon_{\omega}^{\sigma(+)}=\left\{\tau^{2\cdot\varepsilon_{2}}-1,C^{-1}\right\},\quad \Upsilon_{\omega}^{\sigma(-)} =  \left\{\tau-1,-\tau^{\varepsilon_{1}}-1\right\}.
\label{eq: binomial configuration from 5 Y terms, odd-typed}
\end{equation}
\end{proof} 
\vspace{-.5cm}
We now proceed with the proof of Proposition \ref{prop: constant strip and principal minors}, which further examines the properties of 
these configurations. 
\begin{proof}
The thesis holds for $\mathcal{I}$ if and only if it also holds for $\mathcal{J}:=\mathcal{I}_{\alpha}^{i}$, where $(i,\alpha)\in\mathfrak{c}$ and $\mathcal{J}\in\mathfrak{G}(\mathbf{L})$. Indeed, by Remark \ref{rem: invariance null arrays}, $\mathfrak{c}_{\mathcal{J}}$ remains a key, and this change of basis preserves the sets $\mathcal{N}^{\mathfrak{c}_{c}}$,  $\mathcal{N}_{\mathfrak{c}_{r}}$, and hence the planarity condition (\ref{eq: equivocal condition}). Moreover, $(m,\omega)\in\mathcal{N}(\mathfrak{c}_{\mathcal{J}})$ whenever $(m,\omega)\in\mathcal{N}(\mathfrak{c})$. Then, the transformation rule (\ref{eq: Grassmann-Plucker translation exchange, diagonal}) implies 
\begin{equation}
Y(\mathcal{J})_{i\omega}^{\alpha j},Y(\mathcal{J})_{i\omega}^{m\alpha},Y(\mathcal{J})_{\beta i}^{m\alpha},Y(\mathcal{J})_{\beta i}^{\alpha j}\in\mathbb{C}\Leftrightarrow Y(\mathcal{I})_{\alpha\omega}^{ij},Y(\mathcal{I})_{\alpha\omega}^{mi},Y(\mathcal{I})_{\beta\alpha}^{mi},Y(\mathcal{I})_{\beta\alpha}^{ij}\in\mathbb{C}
\label{eq: equivalence under shift of basis}
\end{equation}
for all $i,j\in\mathfrak{c}_{r}$ and $\alpha,\beta\in\mathfrak{c}_{c}$. Thus, applying (\ref{eq: quadrilateral decomposition}), we deduce that 
\begin{equation} 
Y(\mathcal{I})_{\beta\omega}^{mj}=-Y(\mathcal{I})_{\beta\alpha}^{mi}\cdot Y(\mathcal{I})_{\alpha\omega}^{mi}\cdot Y(\mathcal{I})_{\beta\alpha}^{ij}\cdot Y(\mathcal{I})_{\alpha\omega}^{ij}\in\mathbb{C},\quad (j,\beta)\in\mathfrak{c}.
\label{eq: equivalence pivot indices in key}
\end{equation}

For each $\lambda\notin\mathcal{N}_{\mathfrak{c}_{r}}$, we can, by definition, find an index $i\in\mathfrak{c}_{r}$   
such that $h(\mathcal{I}_{\lambda}^{i})\neq0$, say $i=i_{1}$. If $\mathfrak{c}_{\lambda}:=\mathfrak{c}_{r}\times \{\alpha,\lambda\}$ is not a key for either $\alpha\in\{\alpha_{1},\alpha_{2}\}$, then we use relations (\ref{eq: equivalence under shift of basis})-(\ref{eq: equivalence pivot indices in key}) to move to   
$\mathcal{I}_{\alpha_{1}}^{i_{1}}$ and confirm that $\mathfrak{c}_{\lambda}:=\{\alpha_{1},i_{2}\}\times\{\alpha_{2},\lambda\}$ is a key in $\mathcal{I}_{\alpha_{1}}^{i_{1}}$. Therefore, we can always find a basis $\mathcal{H}\in\{\mathcal{I},\mathcal{I}_{\alpha_{1}}^{i_{1}}\}$ such that this construction yields a new key $\mathfrak{c}_{\lambda}$ in $\mathcal{H}$, in addition to $\mathfrak{c}_{\mathcal{H}}$. 
When $\omega\notin\mathcal{N}_{\mathfrak{c}_{r}}$, 
we specify $\lambda:=\omega$ in the construction of $\mathfrak{c}_{\lambda}$ to produce a key $\mathfrak{c}_{\omega}$ 
in a properly chosen basis $\mathcal{H}$. Applying Proposition \ref{prop: constant 2-key} to each key $\mathfrak{c}_{\mathcal{H}}$ and $\mathfrak{c}_{\omega}$ separately, we conclude that each term on the right-hand side of (\ref{eq: equivalence pivot indices in key}) is constant. Using (\ref{eq: equivalence under shift of basis}) if necessary, we obtain the thesis for $\mathcal{I}$. The same conclusion holds by starting with an index $m\notin\mathcal{N}^{\mathfrak{c}_{c}}$, exchanging the roles of upper and lower indices in the previous construction. 
Therefore, we restrict our attention to $(m,\omega)\in \mathcal{N}(\mathfrak{c})$. 

Since $\mathfrak{c}$ is non-planar, there exists an index $\alpha_{3}\in[n]$ that falsifies (\ref{eq: equivocal condition}); we can assume $\alpha_{3}\in\mathcal{I}^{\mathtt{C}}$, transposing indices otherwise. Adapting the previous procedure by setting $\lambda:=\alpha_{3}$, we choose a basis $\mathcal{H}$ to obtain $\mathfrak{c}_{\mathcal{H}}$ and a new key, denoted $\mathfrak{c}_{3}$. As above, 
we have $(m,\omega)\in \mathcal{N}(\mathfrak{c}_{\mathcal{H}})$ by Remark \ref{rem: invariance null arrays}. Thus, we can assume $\mathcal{H}=\mathcal{I}$ without loss of generality, simplifying notation. The entire reasoning above also applies if $(m,\omega)\notin \mathcal{N}(\mathfrak{c}_{3})$, proving the thesis. 

Thus, we focus on $(m,\omega)\in \mathcal{N}(\mathfrak{c})\cap\mathcal{N}(\mathfrak{c}_{3})$. For all $u,w\in[3]$ with $u\neq w$, we have $Y_{\alpha_{u}\alpha_{w}}^{i_{1}i_{2}}\in\mathbb{C}\setminus\{-1\}$ by Proposition \ref{prop: constant 2-key}. It follows that the corresponding (weak) key $\mathfrak{c}_{(u,w)}:=\mathfrak{c}_{r}\times \{\alpha_{u},\alpha_{w}\}$ generates sets $\Upsilon_{\omega, (u,w)}^{+}$ and $\Upsilon_{\omega, (u,w)}^{-}$ as in (\ref{eq: directed pairs}) and, whether they contain only constants or take the form (\ref{eq: binomial configuration from 5 Y terms, even-typed}) or (\ref{eq: binomial configuration from 5 Y terms, odd-typed}), the two elements of each of these sets 
do not share any non-zero common root. 
Thus, any two terms sharing the same non-zero roots, belonging to different sets $\Upsilon_{\omega,(u,w)}^{+}$ and $\Upsilon_{\omega,(u,w)}^{-}$, differ by a single (upper or lower) index, and their ratio is a $Y$-term by (\ref{eq: associativity}). Consequently, the configuration (\ref{eq: binomial configuration from 5 Y terms, even-typed}) does not contain both constant and non-constant terms, as the two non-constant ones would be proportional over $\mathbb{C}$ and, taking the form (\ref{eq: special rational, binomial}), yield a $Y$-term (their ratio) equal to $-1$, contradicting Assumption \ref{claim: no Y=-1}. 
Thus, 
$\left|\Upsilon_{\omega, (u,w)}\cap\mathbb{C}\right|\leq 1$ unless $\Upsilon_{\omega, (u,w)}\subset \mathbb{C}$. 

Now, expressing each $Y_{\alpha_{u}\omega}^{i_{1}i_{2}}=-Y_{\alpha_{u}\omega}^{i_{1}m}/Y_{\alpha_{u}\omega}^{i_{2}m}$, for $u\in\{1,2,3\}$, as a ratio of two terms from $\Upsilon_{\omega,(u,w)}^{+}$ and $\Upsilon_{\omega,(u,w)}^{-}$ 
(not necessarily in that order), we observe the following: the term $Y_{\alpha_{u}\omega}^{i_{1}i_{2}}$, as a function of $\mathbf{t}$ derived from (\ref{eq: binomial configuration from 5 Y terms, even-typed}), has non-zero roots (i.e., it is not a monomial) if and only if its reciprocal $Y_{\alpha_{u}\omega}^{i_{2}i_{1}}$ also has non-zero roots. 
Conversely, when starting from (\ref{eq: binomial configuration from 5 Y terms, odd-typed}), we obtain 
non-zero roots for either $Y_{\alpha_{u}\omega}^{i_{1}i_{2}}$ or $Y_{\alpha_{u}\omega}^{i_{2}i_{1}}$, but not both. This property of $Y_{\alpha_{u}\omega}^{i_{1}i_{2}}$ uniquely identifies, for each $u\in[3]$, the configuration type, either as given by (\ref{eq: binomial configuration from 5 Y terms, even-typed}) or (\ref{eq: binomial configuration from 5 Y terms, odd-typed}), 
associated with the sets $\Upsilon_{\omega,(u,w)}^{+}$ and $\Upsilon_{\omega,(u,w)}^{-}$. 

Given these premises, suppose that there exists a non-constant term, say $Y_{\alpha_{1}\omega}^{mi}$ (with $i\in\mathfrak{c}_{r}$) by appropriately labelling the lower indices. Both this term and $Y_{\alpha_{1}\omega}^{i_{1}i_{2}}$ are associated with two different sets $\Upsilon_{\omega,(1,u)}$ (for $u\in\{2,3\}$). The term $Y_{\alpha_{1}\omega}^{mi}$ generates at least one other non-constant term in the form $Y_{\alpha_{w}\omega}^{mj}$ with $w\neq 1$, which also belongs to the remaining set $\Upsilon_{\omega,(2,3)}$. 
This implies that all three sets $\Upsilon_{\omega, (u,w)}$ satisfy $\left|\Upsilon_{\omega, (u,w)}\cap\mathbb{C}\right|\leq 1$, and hence each takes one of the two forms (\ref{eq: binomial configuration from 5 Y terms, even-typed}) or (\ref{eq: binomial configuration from 5 Y terms, odd-typed}). Furthermore, this bound implies that at most one of the terms $Y_{\alpha_{u}\omega}^{mi}$ (with $u\in[3]$ and $i\in\mathfrak{c}_{r}$) is constant, so we can find at least one set $\Upsilon_{\omega,(w_{1},w_{2})}$ 
without constant terms, i.e., of the type (\ref{eq: binomial configuration from 5 Y terms, even-typed}). On the other hand, the term $Y_{\alpha_{1}\omega}^{i_{1}i_{2}}$ determines the same configuration for both $\Upsilon_{\omega,(1,w)}$ with $w\in\{2,3\}$ as observed above, and then, by the same token, for $\Upsilon_{\omega,(2,3)}$. Therefore, all three $\Upsilon$-sets arise from (\ref{eq: binomial configuration from 5 Y terms, even-typed}). Finally, we focus on $Y_{\omega\alpha_{u}}^{i_{1}m}$ for $u\in[3]$ and note that one of these terms does not share non-zero roots with the remaining two; otherwise, by (\ref{eq: binomial configuration from 5 Y terms, even-typed}) they would take the form $\xi^{\varepsilon_{u}}-1$ with a non-constant monomial $\xi$ and $\varepsilon_{u}\in\{1,-1\}$. In that case, the same value of $\varepsilon_{u}$ would occur for two of these  $Y$-terms, making them coincide and their ratio (a $Y$-term) equal to $-1$, at odds with Assumption \ref{claim: no Y=-1}. We label the indices so that $Y_{\omega\alpha_{1}}^{i_{1}m}$ does not share non-zero roots with either $Y_{\omega\alpha_{2}}^{i_{1}m}$ or $Y_{\omega\alpha_{3}}^{i_{1}m}$. Setting 
$Y_{\omega\alpha_{1}}^{i_{1}m}:=\xi-1$ and $\vartheta_{u}^{-1}:=Y_{\alpha_{u}\alpha_{1}}^{i_{1}i_{2}}$ for $u\in\{2,3\}$,  
we obtain  
$Y_{\alpha_{u}\alpha_{1}}^{i_{1}m}=-Y_{\alpha_{u}\omega}^{i_{1}m}\cdot Y_{\omega\alpha_{1}}^{i_{1}m}=\frac{\xi-1}{\vartheta_{u}\xi+1}$ and 
$Y_{\alpha_{3}\alpha_{2}}^{i_{2}m}=
    -\frac{(\vartheta_{2}\xi)^{-1}+1}{(\vartheta_{3}\xi)^{-1}+1}$.   
Evaluating (\ref{eq: diagonal minors 3-term expansion}) at $(a_{1},a_{2},a_{3}):=(i_{1},i_{2},m)$ and $(\delta_{1},\delta_{2},\delta_{3}):=(\alpha_{1},\alpha_{2},\alpha_{3})$, we find $\mathfrak{m}_{\alpha_{1}\alpha_{2}\alpha_{3}}^{i_{1}i_{2}m}=0$. By (\ref{eq: three Y-term}) and since the indices are pairwise distinct, this contradicts Assumption \ref{claim: no Y=-1} once more. Thus, $\xi\in\mathbb{C}$, and the thesis follows. 
\end{proof} 

\subsection{Proof of Proposition \ref{prop: no non-trivial 2-curvature}} 
\label{app: no non-trivial 2-curvature} 
\begin{proof}
Let $\mathfrak{c}:=\{i_{1},i_{2}\}\times\{\alpha_{1},\alpha_{2}\}$. For any $(i,\alpha)\in\mathfrak{c}$ and $s\in\{1,2\}$, 
$Y_{\alpha\omega_{s}}^{im_{s}}$ is constant by Proposition \ref{prop: constant strip and principal minors}. This observation extends to the case where  $h(\mathcal{I}_{\omega_{2}}^{m_{1}})\cdot h(\mathcal{I}_{\omega_{1}}^{m_{2}})\neq 0$, since Proposition \ref{prop: constant strip and principal minors} applies to all observable sets $\chi(\mathcal{I}\mid_{\alpha\omega_{s}}^{im_{u}})$ with $s,u\in\{1,2\}$. Consequently, $Y_{\alpha\omega_{s}}^{im_{u}}\in\mathbb{C}$, and from (\ref{eq: quadrilateral decomposition}) it follows that $Y_{\omega_{1}\omega_{2}}^{m_{1}m_{2}}\in\mathbb{C}$. 
Thus, in the remainder of this proof, we assume $h(\mathcal{I}_{\omega_{2}}^{m_{1}})\cdot h(\mathcal{I}_{\omega_{1}}^{m_{2}})=0$, which implies that $h(\mathcal{I}_{\omega_{1}\omega_{2}}^{m_{1}m_{2}})\neq 0$. 

We consider the change of basis $\mathcal{I}\mapsto\mathcal{I}_{\omega_{1}}^{m_{1}}$, observing that $\mathfrak{c}$ remains a non-planar key by Remark \ref{rem: planarity preservation} since $(m_{1},\omega_{1})\in\mathcal{N}(\mathfrak{c})$. The set $\chi(\mathcal{I}_{\omega_{1}}^{m_{1}}\mid_{\alpha\omega_{2}}^{im_{2}})$ is observable because $h(\mathcal{I}_{\omega_{1}\omega_{2}}^{m_{1}m_{2}})\cdot h(\mathcal{I}_{\omega_{1}\alpha}^{m_{1}i})\neq 0$ and, as before, we obtain $Y(\mathcal{I}_{\omega_{1}}^{m_{1}})_{\alpha\omega_{2}}^{im_{2}}\in\mathbb{C}$ by Proposition \ref{prop: constant strip and principal minors}. Furthermore, the assumption $h(\mathcal{I}_{\omega_{2}}^{m_{1}})\cdot h(\mathcal{I}_{\omega_{1}}^{m_{2}})=0$ leads to the form (\ref{eq: special rational, binomial}) for $Y_{\omega_{2}\omega_{1}}^{m_{1}m_{2}}=:\tau-1$, where $\tau$ denotes a non-zero monomial. The fact that $Y(\mathcal{I}_{\omega_{1}}^{m_{1}})_{\alpha\omega_{2}}^{m_{2}i}$, 
$Y_{\alpha\omega_{2}}^{m_{2}i}$, and 
$Y_{\alpha\omega_{1}}^{m_{1}i}$ 
are constant allows us to express, for $u\in\{1,2\}$,  $Y_{\omega_{1}\alpha}^{i_{u}m_{1}}=:c_{u,1}-1$, $Y_{\omega_{2}\alpha}^{i_{u}m_{2}}=:c_{u,2}-1$, and, specifying (\ref{eq: g-factor for change of basis}) at $(a_{1},a_{2},a_{3})=(m_{1},m_{2},i)$ and $(\delta_{1},\delta_{2},\delta_{3})=(\omega_{1},\omega_{2},\alpha)$, $\mathfrak{m}_{\alpha\omega_{1}\omega_{2}}^{i_{u}m_{1}m_{2}}=c_{u,3}\cdot\tau$, where $c_{u,1},c_{u,3},c_{u,3}\in\mathbb{C}\setminus\{0\}$.  
Lemma \ref{lem: quadratic extension from diagonal binomials} and Remark \ref{rem: hyperdeterminant} assert that 
$\zeta_{u}^{(1)}:=Y_{\omega_{2}\omega_{1}}^{m_{1}m_{2}}\cdot Y_{\omega_{2}\alpha}^{i_{u}m_{1}}$ and $\zeta_{u}^{(2)}:=Y_{\omega_{2}\omega_{1}}^{m_{1}m_{2}}\cdot Y_{\omega_{1}\alpha}^{i_{u}m_{2}}$ 
are the roots of the quadratic equation $P_{\alpha\omega_{1}\omega_{2}}^{i_{u}m_{1}m_{2}}(X)=0$, where $P_{\alpha\omega_{1}\omega_{2}}^{i_{u}m_{1}m_{2}}$ is a specialisation of (\ref{eq: diagonal minors, quadratic polynomial}) with discriminant $\Delta_{u}:=\Delta_{\alpha\omega_{1}\omega_{2}}^{i_{u}m_{1}m_{2}}$ defined in (\ref{eq: discriminant}). Observe that $\zeta_{1}^{(1)}$ 
and $\zeta_{2}^{(1)}$ are proportional over $\mathbb{C}$ because $\zeta_{1}^{(1)}=-Y_{\omega_{2}\alpha}^{i_{1}i_{2}}\cdot \zeta_{2}^{(1)}$ and $Y_{\omega_{2}\alpha}^{i_{1}i_{2}}=-Y_{\omega_{2}\alpha}^{i_{1}m_{2}}\cdot  Y_{\omega_{2}\alpha}^{m_{2}i_{2}}\in\mathbb{C}$. Analogously, $\zeta_{1}^{(2)}$ 
and $\zeta_{2}^{(2)}$ are proportional over $\mathbb{C}$ with coefficient $-Y_{\omega_{1}\alpha}^{i_{1}i_{2}}\in\mathbb{C}$. Using the notation introduced above, the coefficients of these two polynomials belong to $\mathbb{F}$, and we obtain the explicit form 
 \begin{equation}
\Delta_{u}=(-c_{u,1}+c_{u,2}+\tau-c_{u,3}\cdot\tau)^{2}-4\cdot c_{u,2} \cdot (c_{u,1}-1)\cdot(c_{u,3}\cdot c_{u,2}^{-1}-1)\cdot\tau. 
\label{eq: discriminant, explicit form}
 \end{equation}
Since $Y_{\omega_{2}\alpha}^{i_{1}m_{1}}\cdot Y_{\omega_{2}\alpha}^{m_{1}i_{2}}=-Y_{\omega_{2}\alpha}^{i_{1}i_{2}}=Y_{\omega_{2}\alpha}^{i_{1}m_{2}}\cdot Y_{\omega_{2}\alpha}^{m_{2}i_{2}}\in\mathbb{C}$, it follows that $\mathbb{C}[\mathbf{t},\mathbf{t}^{-1}]$ contains either both $\sqrt{\Delta_{1}}$ and $\sqrt{\Delta_{2}}$ or neither. 
If the two discriminants are not perfect squares, then $\zeta_{u}^{(1)}\notin\mathbb{F}$, which is compatible with the aforementioned conditions $\zeta_{u}^{(1)}+\zeta_{u}^{(2)}\in\mathbb{F}$ and $\zeta_{1}^{(s)}/\zeta_{2}^{(s)}\in\mathbb{C}$, for all $s,u\in\{1,2\}$, only if $Y_{\omega_{1}\alpha}^{i_{1}i_{2}}=Y_{\omega_{2}\alpha}^{i_{1}i_{2}}$. This, in turn, implies $Y_{\omega_{1}\omega_{2}}^{i_{1}i_{2}}=-1$, contradicting Assumption \ref{claim: no Y=-1}; it follows that $\Delta_{1}$ and $\Delta_{2}$ are perfect squares in $\mathbb{C}[\mathbf{t},\mathbf{t}^{-1}]$. 

On the other hand, evaluating the  discriminant of $\Delta_{u}$ with respect to $\tau$, we find that it vanishes only if $c_{u,s}-1=0$ or $c_{u,s} - c_{u,3} = 0$ for some $s\in\{1,2\}$. From (\ref{eq: g-factor for change of basis}), we directly verify that 
$\frac{c_{u,3}}{c_{u,2}}-1=Y(\mathcal{I}_{\omega_{2}}^{m_{2}})_{\omega_{1}\alpha}^{i_{u}m_{1}}$, and analogously $\frac{c_{u,3}}{c_{u,1}}-1=Y(\mathcal{I}_{\omega_{1}}^{m_{1}})_{\omega_{2}\alpha}^{i_{u}m_{2}}$. Along with $c_{u,s}-1=Y(\mathcal{I})_{\omega_{s}\alpha}^{i_{u}m_{s}}$, Assumption \ref{claim: no Y=-1} prevents each of these terms from vanishing and,  
as seen from (\ref{eq: discriminant, explicit form}), the vanishing of the coefficient of $\tau^{2}$ implies that the coefficient of $\tau$ is non-zero.  
It follows that $\Delta_{u}$ is a non-constant polynomial in $\tau$ having no multiple roots. Thus, it can be a perfect square in $\mathbb{C}[\mathbf{t},\mathbf{t}^{-1}]$ only if it is a monomial in $\tau$ and $\sqrt{\tau}\in\mathbb{C}[\mathbf{t},\mathbf{t}^{-1}]$. Hence, the constant term $(c_{u,1}-c_{u,2})^{2}$ in (\ref{eq: discriminant, explicit form}) must vanish, i.e., $Y_{\omega_{1}\alpha}^{i_{u}m_{1}}=Y_{\omega_{2}\alpha}^{i_{u}m_{2}}$ for $u\in\{1,2\}$. We conclude  
$Y_{\omega_{2}\omega_{1}}^{i_{1}i_{2}}\ensuremath{=-Y_{\omega_{2}\alpha}^{i_{1}m_{2}}\cdot Y_{\omega_{2}\alpha}^{m_{2}i_{2}}}\cdot Y_{\alpha\omega_{1}}^{i_{1}m_{1}}\cdot Y_{\alpha\omega_{1}}^{m_{1}i_{2}}=-1$, contradicting Assumption \ref{claim: no Y=-1} again. Thus, $\tau\in\mathbb{C}$, and the thesis follows. 
\end{proof} 

{\small 
\bibliographystyle{plainnat}
\bibliography{references}
}

\end{document}